\documentclass[aap,preprint]{imsart}

\RequirePackage{amsthm,amsmath,amsfonts,amssymb}
\RequirePackage[numbers]{natbib}
\RequirePackage[colorlinks,citecolor=blue,urlcolor=blue,bookmarksdepth=subsubsection]{hyperref}%% uncomment this for coloring bibliography citations and linked URLs
\RequirePackage{graphicx}%% uncomment this for including figures

\usepackage[utf8]{inputenc}
\usepackage[dvipsnames]{xcolor}
\usepackage{colortbl,mathrsfs,mathtools,dsfont,appendix,float,lineno,ulem}
\usepackage[shortlabels]{enumitem}
\usepackage[nameinlink]{cleveref}
\usepackage{subcaption}
\usepackage{multirow}
\usepackage{stmaryrd}

\makeatletter
\def\namedlabel#1#2{\begingroup
    #2%
    \def\@currentlabel{#2}%
    \phantomsection\label{#1}\endgroup
}
\makeatother

\theoremstyle{plain}
\newtheorem{definition}{Definition}
\newtheorem{proposition}[definition]{Proposition}
\newtheorem{lemma}[definition]{Lemma}

\newtheorem{theorem}{Theorem}
\newtheorem{remark}[definition]{Remark}

\theoremstyle{definition}

\numberwithin{definition}{section}
\numberwithin{equation}{section} %Equations are numbered as #section.#equation

\DeclareMathOperator{\diam}{diam}

\DeclareMathOperator{\supp}{supp}
\DeclareMathOperator{\Var}{Var}
\DeclareMathOperator{\Cov}{Cov}
\renewcommand{\Pr}{\mathrm P}
\newcommand{\grad}{\nabla}
\renewcommand*{\P}{\mathbb{P}}
\newcommand*{\E}{\mathbb{E}}
\newcommand*{\Eu}{\mathbb{E}_{u}}
\newcommand*{\R}{\mathbb{R}}

\newcommand*{\Z}{\mathbb{Z}}

\newcommand*{\N}{\mathbb{N}}

\newcommand*{\Qcal}{\mathcal{Q}}

\newcommand*{\Fcal}{\mathcal{F}}

\newcommand*{\Gcal}{\mathcal{G}}
\newcommand*{\Ccal}{\mathcal{C}}

\newcommand*{\Dcal}{\mathcal{D}}

\newcommand*{\Lcal}{\mathcal{L}}

\newcommand*{\Rcal}{\mathcal{R}}

\newcommand*{\bx}{\mathbf{x}}
\newcommand*{\bff}{\mathbf{f}}
\newcommand*{\bX}{\mathbf{X}}
\newcommand*{\bG}{\mathbf{\Gamma}}
\newcommand*{\bg}{\boldsymbol{\gamma}}
\newcommand{\norm}[1]{\left \lVert  #1 \right \rVert}

\newcommand*{\1}{\mathds{1}}

\renewcommand*{\L}{\Lambda}

\renewcommand*{\d}{\mathrm{d}}
\newcommand*{\e}{\mathrm{e}}

\renewcommand{\e}{\mathrm{e}}

\crefname{equation}{}{}

\definecolor{amethyst}{rgb}{0.6, 0.4, 0.8}

\begin{document}
\begin{frontmatter}

\title{Existence, properties, and parametric inference for possibly hyperuniform Gibbs perturbed lattices}
\runtitle{Gibbs perturbed lattices}

\begin{aug}
%%%%%%%%%%%%%%%%%%%%%%%%%%%%%%%%%%%%%%%%%%%%%%%
%% Only one address is permitted per author. %%
%% Only division, organization and e-mail is %%
%% included in the address.                  %%
%% Additional information can be included in %%
%% the Acknowledgments section if necessary. %%
%% ORCID can be inserted by command:         %%
%% \orcid{0000-0000-0000-0000}               %%
%%%%%%%%%%%%%%%%%%%%%%%%%%%%%%%%%%%%%%%%%%%%%%%
\author[A]{\fnms{Jean-François}~\snm{Coeurjolly} \ead[label=e1]{jean-francois.coeurjolly@univ-grenoble-alpes.fr}}
\and
\author[A]{\fnms{Christopher}~\snm{Renaud Chan}\ead[label=e2]{christopher.renaud-chan@univ-grenoble-alpes.fr}}
%\thanks{[\textbf{Corresponding author}]}

\address[A]{Univ. Grenoble Alpes, CNRS, LJK, 38000 Grenoble, France \printead[presep={,\ }]{e1,e2}}

\end{aug}

%% Abstract
\begin{abstract}
%% Text of abstract
This work lies at the intersection of Gibbs models and hyperuniform point processes. Classical Gibbs models, whether defined on lattices or in continuous space, provide flexible tools to describe interacting particle systems but are generally not hyperuniform. Conversely, known hyperuniform models such as the Ginibre process or perturbed lattices lack flexibility and typically cannot enforce physically relevant constraints such as hard-core interactions. We introduce a new class of models, termed Gibbs perturbed lattice models, which preserve a lattice structure while allowing interactions through a Hamiltonian defined on the perturbed particle locations. We establish existence results for the associated Gibbs measures, derive DLR-type equilibrium equations, and show that some models in this class exhibit hyperuniformity. Finally, we propose statistical inference methods based on the Takacs-Fiksel type approach and prove their asymptotic properties.
\end{abstract}

%%Graphical abstract
%\begin{graphicalabstract}
%\includegraphics{grabs}
%\end{graphicalabstract}

%%Research highlights
%\begin{highlights}
%\item Research highlight 1
%\item Research highlight 2
%\end{highlights}

\begin{keyword}[class=MSC]
\kwd[Primary ]{60G55}
\kwd[; Secondary ]{62M30}
\end{keyword}

\begin{keyword}
\kwd{Lattice systems; Gibbs processes; Hyperuniformity; Dobrushin-Lanford-Ruelle equations; Simulation; Takacs-Fiksel method}
\end{keyword}

\end{frontmatter}
	
\section{Introduction} ${ }$\\

\noindent {\bf Gibbs models.} Gibbs processes form a particular class of stochastic models designed to describe interacting geometric objects (e.g., points). Such processes are typically defined either on a lattice, say $S=\mathbb Z^d$, or on a continuous space, say $S=\mathbb R^d$. General references include \citep{gaetan2010spatial,georgii1979canonical,georgii2011book,ruelle1970superstable}. 
More precisely, when $S=\mathbb Z^d$, one speaks of a \textit{Gibbs lattice system}, denoted by $\bX=\{X_i,  i\in\mathbb Z^d\}$. The locations $i$ are fixed, and the model takes values in $E^{\mathbb Z^d}$, where $E$ denotes the state space. The model is specified through conditional distributions and a Hamiltonian depending on the variables $X_i$. Well-known examples include the Ising model, where $E=\{-1,1\}$, the Potts model, where $E=\{a_0,\dots,a_{K-1}\}$ with $K\ge2$ (see, e.g., \citep{friedli2017statistical}), and Besag autoregressive-type models \citep{besag1974spatial}. When $S=\mathbb R^d$, one speaks of a \textit{Gibbs point process}, denoted by $\bX=\{x_1,\dots,x_n,\dots\}$ with $x_i\in\mathbb R^d$. For standard Gibbs point processes, both the locations and the number of points are random, and the model can again be described through conditional densities with respect to the Poisson measure, see, for example, \citep{dereudre2019introduction}. Interactions between points are encoded via an energy function, or Hamiltonian. When this Hamiltonian depends on pairwise distances, one obtains pairwise interaction point processes, including the classical Strauss model \citep{strauss1975model} and the Lennard-Jones model (see, e.g., \citep{ruelle1970superstable}). Models with higher-order interactions include, for instance, the Quermass model \citep{kendall1999quermass}, which encompasses the area-interaction and Widom-Rowlinson models as special cases, and the Delaunay-Voronoi interaction models \citep{dereudre2011practical}. Gibbs lattice systems and Gibbs point processes constitute two fundamentally different classes of models, each with its own characteristics, mathematical tools, and theoretical results. In particular, the existence and phase transition analysis for infinite-volume Gibbs point processes relies on techniques that differ from those used in the lattice setting \citep{dereudre2019introduction,dereudre2012existence,georgii1979canonical}. \\

\noindent{\bf Hyperuniformity.} Hyperuniformity is a relatively recent concept in probability theory. Introduced in the physics literature, most notably by \citep{torquato2003local} (see also the surveys by \citep{torquato2016hyperuniformity,torquato2018hyperuniform}). It aims to describe systems of rigid particles exhibiting an unusual form of large-scale order. Roughly speaking, a point process is said to be hyperuniform if $\Var[N(B(0,r))]/|B(0,r)|$, that is the normalized variance of the number of points falling in a Euclidean ball with radius $r$, tends to 0 as $r\to \infty$. This condition expresses that the fluctuations of the counting variable grow more slowly than those of a homogeneous Poisson point process. We refer the reader to \citep{lachièzerey2025hyperuniformrandommeasurestransport} for a recent and comprehensive probabilistic and statistical overview of hyperuniform point processes. Hyperuniformity is not simply related to repulsiveness. For instance, Matérn hard-core point processes are known not to be hyperuniform (see, e.g., \citep{KiderlenHoerig2013}). Rather, hyperuniformity is a global and subtle property.

Several classes of point processes are known to be hyperuniform, including the Sine-$\beta$ process \citep{valko2009continuum}, the Ginibre process, as particular examples of (continuous) determinantal point processes (see, e.g., \citep{lavancier2015determinantal}), and the zero set of Gaussian Analytic Functions \citep{hough2009zeros}. Another important class consists of perturbed lattices, defined as $\bG=\{i+X_i, i\in\Lcal\}$, where $\Lcal$ is a lattice and $(X_i)_i$ is a random field indexed by~$\Lcal$. When the random variables $X_i$, called \textit{moves}, are i.i.d. with common distribution $\mathbb Q$, \citep{dereudre2024non2} provide conditions on $\mathbb Q$ and on the dimension $d$ that ensure hyperuniformity of~$\bG$. Although the Ginibre and perturbed lattice models are simple to define and interpret, they suffer from important limitations. In particular, it is not possible to impose a hard-core condition of the form $|i+X_i-j-X_j|\ge\delta>0$ for $i\neq j$. Such a constraint is, however, of significant interest in physics \citep{buisine2025absorption}. One might conjecture that this issue could be addressed through a dependent subsampling of a hyperuniform point process; however, such a procedure destroys the hyperuniformity property \citep{lachièzerey2025hyperuniformrandommeasurestransport}.\\

\noindent{\bf Gibbs perturbed lattice  model}. At this point, one may wonder whether continuous Gibbs point processes, which are natural and flexible models able to produce repulsive point patterns, can be hyperuniform. The answer is in fact negative, as shown recently by \citep{dereudre2024non}. The authors prove that most Gibbs point processes defined on $\mathbb R^d$ are not hyperuniform and fail to identify any hyperuniform example within this class. In this paper, we introduce a new class of models lying at the intersection of perturbed lattices and continuous Gibbs point processes. On a finite sub-lattice, $\Delta \subset \Lcal$, and for ease of exposition, a Gibbs perturbed lattice model is defined through a density with respect to $\mathbb Q^{\otimes\Delta}$. It is worth emphasizing that our framework fundamentally differs from both Gibbs lattice systems and continuous Gibbs point processes. Indeed, the resulting point process remains of the form $\bG=\{i+X_i, i\in\Lcal\}$. Each lattice site corresponds to a unique point of $\bG$, so that $\bG$ can also be interpreted as a lattice random field. The key difference, however, is that the Hamiltonian does not depend on the values of the random field itself, but instead on the locations of the perturbed points, that is, on the elements of $\bG$.\\

\noindent{\bf Contributions.} Given the specific structure of our model, important theoretical questions, such as the existence of infinite-volume Gibbs measures and the formulation of Dobrushin-Lanford-Ruelle (DLR) equations \citep{dereudre2019introduction}, cannot be directly addressed using existing results for classical Gibbs lattice systems or continuous Gibbs point processes (see, e.g., \citep{georgii1979canonical,georgii2011book,ruelle1970superstable}). We tackle these issues in the present paper. We provide sufficient conditions on the move distribution $\mathbb Q$ and on the Hamiltonian of the particle system that ensure the existence of stationary Gibbs perturbed lattice models, both in finite and infinite volumes. Building on a general result obtained by~\citep{dereudre2024non2}, we then show that some models within this class are hyperuniform. In addition, we introduce a broad family of equilibrium equations (see Section~\ref{sec:dlr}), referred to as first-order, second-order, and variational DLR equations, which can be viewed as analogues of the corresponding first-order, second-order, and variational Georgii-Nguyen-Zessin (GNZ) equations \citep{baddeley2013variational,georgii1979canonical,nguyen1977integral}.

Having at our disposal a broad new class of parametric models, we then address the problem of estimating a parameter $\theta\in\mathbb R^p$ ($p\ge1$) from a single realization of a Gibbs perturbed lattice model. We consider two observation settings, summarized in Figure~\ref{fig:framework}: either the pairs $(i,i+X_i)$ are observed, or only the perturbed locations $(i+X_i)$. Due to the presence of an intractable normalizing constant in the model definition, maximum likelihood estimation is impractical, and likelihood-based approximations are computationally expensive, as is also the case for standard Gibbs lattice systems and continuous Gibbs point processes, see e.g.~\citep{gaetan2010spatial}. We therefore propose alternative inference methods based on pseudo-likelihood \citep{baddeley2000practical,besag1974spatial,jensen1991pseudolikelihood} and variational approaches (see, e.g., \citep{almeida1993variational,baddeley2013variational}). We show that both approaches can be formulated as particular cases of the Takacs–Fiksel method, originally introduced for continuous Gibbs point processes via Georgii–Nguyen–Zessin (GNZ) equations \citep{coeurjolly2012takacs,fiksel1984estimation,fiksel1988estimation,takacs1986estimator}. In our framework, the Takacs–Fiksel estimator is defined by minimizing a contrast function based on the $L^2$ distance between empirical versions of DLR equations evaluated on $L\ge p$ test functions. It is worth emphasizing that the pseudo-likelihood and Takacs–Fiksel contrast functions differ substantially from their counterparts for Gibbs lattice systems and continuous Gibbs point processes, as they require nonstandard and careful border corrections involving both $i$ and $i+x$ for $x\in\mathbb R^d$. We work within an increasing-domain asymptotic framework and establish, using in particular novel ergodic results for empirical versions of DLR equations, that the resulting Takacs–Fiksel estimator of $\theta$ is consistent. Moreover, we derive its rate of convergence.\\

\noindent{\bf Outline.} The remainder of the paper is organized as follows. Section~\ref{sec:background} reviews Gibbs lattice systems and Gibbs point processes and introduces the main assumptions imposed on the move distribution $\mathbb Q$ and on the Hamiltonian of the particle system considered throughout the paper. Section~\ref{sec:existence} is devoted to the definition of a Gibbs perturbed lattice  models, the existence of the associated Gibbs measures, their main properties, and the derivation of DLR-type equations. Section~\ref{sec:inference} addresses parametric statistical inference for the Gibbs perturbed lattice model. We present several inference methodologies and establish asymptotic results within the increasing-domain framework. Section~\ref{sec:simulation} provides illustrative examples of the proposed model and statistical methodology. Finally, the proofs of the main results are deferred to~\ref{app:aux.existence}–\ref{app:convTF}.

\section{Background and notation} \label{sec:background}
	
\subsection{Lattice, perturbed lattices and point processes}
	
Let $\Lcal$ be a full rank lattice, subgroup of $\R^d$ generated by linear combinations with integer coefficients of the vectors $(a_1, \dots, a_d)$ that forms a basis of $\R^d$. The fundamental domain associated to a lattice $\mathcal{L}$  is the set $$ \mathcal{D} := \left\{ \sum_{i=1}^d t_i a_i, (t_1, \dots, t_d) \in [0,1)^d\right\}.$$ The fundamental domain is the polyhedra that is used to pave $\R^d$ according to the lattice. We denote by $\delta := \min\{ |i|, i \in \Lcal\}$ the size parameter of the lattice. The basic example is $\Lcal=\Z^d$ and $\Dcal=[0,1)^d$. In this paper, the lattice $\Lcal$ is viewed as the origin of the particles. We consider $\R^d$ to be the mark space, also called spin space or state space, which describes the displacement of a single particle from their lattice positions. We denote by $\mathbb{Q}$ a probability measure on $\R^d$, which serves as the reference distribution for this single displacement  particle. For each $i \in \Lcal$, we denote by $x_i \in \R^d$ the displacement (or perturbation) of the particle at site $i$.  The lattice system configuration space is denoted by $\Omega := (\R^d)^\Lcal$, and is endowed with the product $\sigma$-algebra, and a configuration is denoted by $\bx := (x_i)_{i\in \Lcal}$.  For any subset $\Lambda \subset \Lcal$, the configuration on the sub-lattice is denoted by $\Omega_\Lambda := (\R^d)^\Lambda$, and the projection of a configuration $\bx \in \Omega$ into $\Omega_\Lambda$ is denoted by $\bx_\Lambda := (x_i)_{i\in \Lambda}$. A random field on the lattice $\Lcal$, denoted by $\bX$, is therefore a random variable on $\Omega$. In the following, we let $U\sim \mathcal U_\Dcal$ be a random variable uniformly distributed on $\Dcal$. For $B\subseteq \R^d$, we let $\1_B: \R^d \to \{0,1\}$ denote the indicator function on $B$. Finally, for any vector $z\in \R^k$, $k\ge 1$ or any bounded set $B\subset \R^d$, the notation $|z|$ and $|B|=\lambda^d(B)$ respectively stand for the Euclidean norm of $z$ and the volume (the $d$-dimensional Lebesgue measure $\lambda^d$) of $B$.

A perturbed lattice, denoted by $\bG$, is a random variable from $\Omega \times \R^d$ to the set of all countable subsets of $\R^d$ such that $\bG(\bX, U) := \{ i + X_i + U, i\in \Lcal \}$, where we require that particles arising from a perturbed lattice are indistinguishable. Furthermore, for any subset $\Lambda \subset \Lcal$, the perturbed sub-lattice $\bG_\Lambda$ is the random variable $\bG_\Lambda(\bX_\Lambda, U) := \{ i + X_i + U, i \in \Lambda \}$. We denote by $\Lcal_U := \{ i+U, i \in \Lcal \}$ the shifted lattice. This shifted lattice is by construction invariant under any translation on $\R^d$. Therefore, $U$ stationarizes the underlying lattice from which we build the perturbed lattice. From now on, unless explicitly specified and when there is no ambiguity, the notation $\Pr,\E[\cdot],\Var[\cdot],\Cov[\cdot,\cdot]$, standing for probability, expectation, variance and covariance, is used without indicating the underlying probability measure(s). A point configuration in $\R^d$ is in general denoted by $\bg$ and  is a locally finite set in $\R^d$, i.e. for any bounded Borel set $W$ of $\R^d$, $\# \bg \cap W$ is finite. The set of point configurations is denoted by $\mathcal{C}$ and the set of finite point configurations is denoted by $\mathcal{C}_f$. We endow $\Ccal$ with the $\sigma$-algebra generated by the counting function $N_W$ for all Borel sets $W$ in $\R^d$. In the following, for any $W \subset \R^d$  and $\bg \in \Ccal$ we denote by $\bg_W := \bg \cap W$, the restriction of $\bg $ to $W$. A point process is a random variable on $\Ccal$.  For $\bg \in \Ccal$ and $v \in \R^d$, we denote by $\bg + v$ the translation of all points of $\bg$ by $v$. A point process $\bG$ is said to be stationary when its distribution is translation invariant, i.e. the distributions of $\bG + v$ and $\bG$ are identical for any $v \in \R^d$. 

\subsection{Hamiltonian of a system and move distribution} \label{sec:hamiltonian}

The Gibbs perturbed lattice model depends mainly on two ingredients: the Hamiltonian $H$ which measures the interaction between particles of the system and the mark (or move) distribution $\mathbb Q$. In this section, we provide definitions, examples and the main assumptions on $H$ and $\mathbb Q$.

\paragraph{Hamiltonian or energy functional}

We consider models of interaction that are common in the canonical and grand canonical Gibbs point process setting. The Hamiltonian, also called the energy functional, is a measurable function, $H : \mathcal{C}_f \to \R \cup \{\infty\}$. Let $W \subset \R^d$ be a bounded domain, the local energy on $W$ of a configuration $\zeta \in \Ccal$ is defined as 
\begin{equation}
    H_W(\bg) := \lim\limits_{n \to \infty} \left( H(\bg_{[-n,n]^d}) - H(\bg_{[-n,n]^d \setminus W})   \right)
\end{equation}
if the limit exists, where we use the convention  $\infty - \infty =0$. Another quantity of interest is the local energy of a point which measures the contribution of a particle at a certain position to the energy of the whole configuration. Let $\zeta \in \Ccal$ and $x \in \R^d$,  the local energy is a measurable function from $\R^d \times \Ccal$ to $\R \cup \{\infty\}$ given by
\begin{equation}
h(x,\bg) := \lim\limits_{n \to \infty} \left( H(\{x\} \cup \bg_{[-n,n]^d}) - H(\bg_{[-n,n]^d})  \right).
\end{equation}
We impose a few definitions, restrictions on $H$. These assumptions are in particular required to ensure the existence of a Gibbs perturbed lattice in the infinite volume, see Theorem~\ref{thm:existence}.
\begin{enumerate}[($\mathcal H$1)]
\item $H$ is stable, i.e. there exists $A \geq 0$ such that for all $\bg \in \mathcal{C}_f$, \ $H(\bg) \geq  - A N(\bg)$.  \label{H:stability}
%{\sc [Stability]}. There exists $A \geq 0$ such that for all $\bg \in \mathcal{C}_f$, \ $H(\bg) \geq  - A N(\bg)$.  \label{H:stability}
\item $H$ is non-degenerate, i.e. there exists $\varepsilon >0$ and $B\geq0$ such that %\JF{$\mathbb Q$ même pas défini}\sout{: (i) $\mathbb{Q}(B(0,\varepsilon))>0$; (ii)} 
for all $i \in \mathcal{L}$, $|x_i| \leq \varepsilon$, and for any finite $\Lambda \subset \mathcal{L}$, $H(\{i+x_i\}_{i\in\Lambda}) \leq B |\Lambda|$.  \label{H:nondegenerate}
\item $H$ is hereditary, i.e. for any $\bg \in \mathcal{C}_f$ such that  $H(\bg) = \infty$, then, for any $x \in \R^d$,  $H(\bg \cup \{x\}) =  \infty$. \label{H:heredity}
\item $H$ is invariant under translation, i.e. for all $v \in\R^d$ and $\bg \in \mathcal{C}_f$, $H(\bg) = H(\bg + v)$. \label{H:invariant}
\end{enumerate}        
\begin{enumerate}[($\mathcal H5.$1)]
\item $H$ has a finite range, i.e. there exists $R>0$ such that for any $W \subset \R^d$ bounded, 
%the local energy on $W$ depends only on points of $\bg_{W \oplus B(0,R)}$, i.e. 
$H_W(\bg) =  H_W (\bg_{W \oplus B(0,R)})$,
where $W \oplus B(0,R) := \{ x + y, x \in W, y \in B(0,R) \}$. \label{H:range}
\item %{\sc Summable pairwise interaction}. 
There exists $\phi : \R^+ \to \R \cup \{\infty\}$ monotone at infinity and $\epsilon>0$ such that
\begin{equation}
H(\bg) = \sum_{\{x,y\} \subset \bg} \phi(|x-y|), \quad \text{and } \quad \int_{\R^+ \setminus  [0,\epsilon]} r^{d-1} \phi(r) \d r< \infty. 
\end{equation} \label{H:summable}
\item %{\bf Bounded local energy}. 
The local energy function $h$ is uniformly bounded. \label{H:bounded}
\end{enumerate}

Let us discuss these assumption through the presentation of a few examples of standard Hamiltonians used in the literature \citep{kendall1999quermass,moller2003statistical,ruelle1970superstable}. The first three ones are pairwise interaction point processes, while the last one acts on geometric features of a random set. All these examples satisfy (at least) \ref{H:stability}-\ref{H:invariant} (with some restriction on the parameter space for some of them). 
\begin{enumerate}
    \item {\it Strauss interaction with possible hard-core interaction}. Let $A \in \R$ and $0\le r<R$.  For any $\bg \in \Ccal_f$, the Hamiltonian is given by
        \begin{equation}
        H(\bg) = \sum_{\{x,y\} \in \bg} A \1_{[r,R]}(|x-y|) + (\infty) \1_{[0,r]}(|x-y|)
        \end{equation}
    This model satisfies \ref{H:range} (and thus \ref{H:summable}). Note in particular that \ref{H:stability} is satisfied if $r=0$ and $A \geq 0$ or if $r>0$ and that \ref{H:nondegenerate} is satisfied for $\varepsilon < \delta-2r$.
    \item {\it Lennard-Jones interaction}. Let $A>0$, $B \in \R$, $R>0$ and $m>n>d$. For any $\bg \in \Ccal_f$, the Hamiltonian is given by 
    \begin{equation}
        H(\bg) = \sum_{\{x,y\} \in \bg } A \left( \frac{R}{|x-y|}\right)^m - B \left( \frac{R}{|x-y|}\right)^n.
    \end{equation}
    This infinite range model satisifies~\ref{H:summable}. Furthermore, for $\Lambda \subset \Lcal$, $\varepsilon< \frac{\delta}{2}$ and $|x_i| \leq \varepsilon$, by integral-series comparison there is $c>0$ such that 
    \begin{equation}
        H(\{i+x_i\}_{i\in\Lambda}) \leq c|\Lambda| \int_{\delta - 2\varepsilon}^\infty r^{d-1} \left| A \left(\frac{R}{r}\right)^m - B\left(\frac{R}{r}\right)^n \right| \d r, 
    \end{equation}
    and thus the interaction satisfies \ref{H:nondegenerate}.
    \item {\it Quermass interaction.} Let $\theta_i \in \R$ for $i \in [0,d]$ and $R>0$. For any $\bg \in \Ccal_f$, the Hamiltonian is given by
    \begin{equation}
        H(\bg) = \sum_{i = 0}^d \theta_i M_i^d(L_R(\gamma))
    \end{equation}
    where $M_i^d$ is the $i$-th Minkowski functional, in particular $M_d^d$ is the Lebesgue measure, $M_{d-1}^d$ is the $d-1$-Hausdorff measure of the surface and $M_0^d$ is the Euler-Poincaré characteristic, and $L_R(\bg)$ is the halo of a configuration, i.e.
    \begin{equation*}
        L_R(\bg) = \bigcup_{x \in \bg} B(x,R).
    \end{equation*}
    When $\theta_i =0 $ for $i=0,\dots,d-1$ and $\theta_d \in \R$, the Quermass interaction corresponds to the one component Widom-Rowlinson model and it satisfies \ref{H:bounded}. When $d=2$, the Quermass interaction verifies \ref{H:stability}, as proven in \citep{kendall1999quermass}. More precisely, they show that there is $c>0$ such that $|M_0^2(L_R(\gamma))|\leq c N(\gamma)$. However, in higher dimensions ($d \geq 3$), the stability of this Hamiltonian remains unclear; see \citep{kendall1999quermass} for further discussion. Furthermore, in dimension 2, for $\Lambda \subset \Lcal$, $\varepsilon >0$ and $|x_i| \leq \varepsilon$ we have
    \begin{equation}
        H\left(\{i+x_i\}_{i \in \Lambda}\right) \leq |\Lambda| \left(c + |M_1^2(B(0,R))| + |M_2^2(B(0,R))|\right)
    \end{equation}
    and thus the interaction verifies \ref{H:nondegenerate}.
\end{enumerate}

\noindent \paragraph{Move (or mark) distribution}The Gibbs perturbed lattice model, precisely described in the next section,  also depends on a move distribution $\mathbb Q$. Again to ensure probabilistic properties of the resulting Gibbs perturbed lattice, we require some of the assumptions from the following list.
\begin{enumerate}[($\mathcal Q$1)]
\setcounter{enumi}{-1} 
\item $\mathbb Q(B(0,\varepsilon))>0$ for the real number $\varepsilon$ given in~\ref{H:nondegenerate}. \label{Q:B0e}
\item {\!\!\![$m$]}. There exists $c>0$ such that  $\E(\e^{c|X|^m})<\infty$ where $X \sim \mathbb Q$. \label{Q:momentexp}
\item The set $\Qcal = \supp(\mathbb{Q})$ is bounded.  \label{Q:bounded}
\item $\E(|X|^d)<\infty$ for $X\sim \mathbb Q$.   \label{Q:moment}
\end{enumerate}

Obviously, if  $\mathbb Q$ has a continuous density on $\Qcal$, $\mathbb Q(B(0,\varepsilon))>0$ is satisfied  for any $\varepsilon$ such that $B(0,\varepsilon)\cap \Qcal \neq \emptyset$. Second, 
\ref{Q:bounded} implies~\ref{Q:momentexp}[$m$] (for any $m>0$) which itself implies \ref{Q:moment}. Three main examples are considered in this paper: (a) The uniform distribution on $\Qcal$ which satisfies \ref{Q:bounded}; (b) the isotropic Gaussian distribution with variance $\sigma^2$, which satisfies \ref{Q:momentexp}[$m$] (for $m=1,2$); (c) The exponential distribution on the positive orthant, i.e. the distribution with density $\lambda_1(x)=\exp(-q(x))$ with $q(x)=  \theta_1 \sum_i x_i + c(\theta_1)$ for any $x\in \Qcal=\{x\in\mathbb R^d: x_i\ge 0, i=1,\dots,d \}$. For this model $p_1=1$, $\theta_1\in \mathbb R^+$ and \ref{Q:momentexp}[1] is satisfied.

%%%%%%%%%%%%%%%%%%%%%%%%%%%%%%%%%%%%%%%%%%%%%%%%%%%%%%%%%%%%%%%%%%%%
%%%%%%%%%%%%%%%%%%%%%%%%%%%%%%%%%%%%%%%%%%%%%%%%%%%%%%%%%%%%%%%%%%%%
%%%%%%%%%%%%%%%%%%%%%%%%%%%%%%%%%%%%%%%%%%%%%%%%%%%%%%%%%%%%%%%%%%%%
%%%%%%%%%%%%%%%%%%%%%%%%%%%%%%%%%%%%%%%%%%%%%%%%%%%%%%%%%%%%%%%%%%%%
%%%%%%%%%%%%%%%%%%%%%%%%%%%%%%%%%%%%%%%%%%%%%%%%%%%%%%%%%%%%%%%%%%%%

\section{Existence and properties of Gibbs perturbed lattices} \label{sec:existence}

\subsection{Gibbs measure on a finite sub-lattice}
Before we introduce and define the Gibbs perturbed lattice, we define a Gibbs measure on a finite sub-lattice with spin space $\R^d$ and a Hamiltonian indexed by the lattice and given by
\begin{equation}
\tilde{H}(\bx_\Lambda)  = H(\bG_\Lambda(\bx_\Lambda, 0))
\end{equation}
for any finite $\Lambda \subset \Lcal$ and any $\bx_\Lambda \in (\R^d)^\Lambda$. 
\begin{definition}
For any configuration $\bx \in (\R^d)^\Lcal$ and any finite subset $\Delta \subset \Lcal$, we define the local energy of $\bx$ moved from $\Delta$ as 
\begin{equation}
\tilde H_\Delta(\bx) = \lim\limits_{n\to \infty} \left( \tilde H(\bx_{\Lambda_n}) - \tilde H (\bx_{\Lambda_n \setminus \Delta}) \right)
\end{equation}
where $\Lambda_n = [-n,n]^d \cap \Lcal$  and with the convention $\infty - \infty =0$.
\end{definition}
It is worth pointing out that if $H$ has a finite range $R$, i.e. satisfies~\ref{H:range}, $\tilde H$ has a finite range if and only if $\Qcal$ is bounded, i.e. satisfies~\ref{Q:bounded}. In such a case, the finite range of $\tilde H$ is $\tilde R= R + \diam \Qcal$ and for any finite $\Delta \subset \Lcal$ we have
\begin{equation}
\tilde{H}_\Delta(\bx) = \tilde{H}_\Delta(\bx_{\Delta \oplus B(0,\tilde R) \cap \Lcal }).
\end{equation}
%In the following section, we define and prove the existence of the Gibbs measure on the lattice underlying the model of the Gibbs perturbed lattice. First, we define the Gibbs perturbed finite sub-lattice.
\begin{definition}
Let $\Lambda \subset \mathcal{L}$ finite, the distribution of the Gibbs measure on the sub-lattice $\Lambda$ is given by
\begin{equation}
\P_\Lambda(\d \bx_\Lambda) = \frac{1}{Z_\Lambda} \e^{-\tilde H(\bx_\Lambda)} \mathbb{Q}^{\otimes \Lambda}(\d \bx_\Lambda)
\end{equation}
where 
$$
Z_\Lambda=\int \e^{-\tilde H_\Delta(\bx_\Delta)  } \mathbb{Q}^{\otimes \Delta}(\d \bx_\Delta)
$$
is the normalizing constant, also called the partition function.
\end{definition}
Under the assumptions \ref{H:stability}-\ref{H:nondegenerate} we have $(\log\mathbb{Q}(B(0,\varepsilon))-B) |\Lambda| \le \log Z_\Lambda| \leq  A |\Lambda|)$,
%\begin{align*}
%\e^{(\log\mathbb{Q}(B(0,\varepsilon))-B) |\Lambda|}\leq Z_\Lambda \leq \e^{A |\Lambda|}, 
%\end{align*}
which ensures that the Gibbs measure on the sub-lattice is well-defined. The next result states that it also satisfies the Dobrushin-Lanford-Ruelle (DLR) equations, i.e. provides the conditional distribution of the lattice for any finite subset $\Delta \subset \Lambda$ given the configuration outside of $\Delta$.
\begin{proposition}[DLR Equations]
Let $\Delta \subset \Lambda \subset \Lcal$ be finite, we have for $\P_\Lambda$-a.e $\bx_{\Lambda \setminus \Delta}$,
\begin{equation}
\P_\Lambda (\d \bx_\Delta' | \bx_{\Lambda \setminus \Delta}) = \frac{1}{Z_\Delta(\bx_{\Lambda \setminus \Delta})}  \e^{-\tilde H_\Delta(\bx_\Delta' \cup \, \bx_{\Lambda \setminus \Delta})  } \mathbb{Q}^{\otimes \Delta}(\d \bx_\Delta^\prime)
\end{equation}
where $Z_\Delta(\bx_{\Lambda \setminus \Delta})$ is the normalizing constant given by
\begin{equation*}
Z_\Delta(\bx_{\Lambda \setminus \Delta}) = \int \e^{-\tilde H_\Delta(\bx_\Delta' \cup \, \bx_{\Lambda \setminus \Delta})  } \mathbb{Q}^{\otimes \Delta}(\d \bx^\prime_\Delta).
\end{equation*}
\end{proposition}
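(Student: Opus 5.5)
The plan is to compute the conditional distribution directly from the joint density and use the finite-volume identity $\tilde H(\bx_\Lambda)=\tilde H_\Delta(\bx_\Lambda)+\tilde H(\bx_{\Lambda\setminus\Delta})$. This identity is the lattice analogue of the usual energy decomposition. Since $\Lambda$ is finite, for $n$ large enough $\Lambda_n\supset\Lambda$, and hence
\begin{equation*}
\tilde H_\Delta(\bx_\Lambda)=\tilde H(\bx_\Lambda)-\tilde H(\bx_{\Lambda\setminus\Delta}),
\end{equation*}
with the convention $\infty-\infty=0$. So the limit in the definition of $\tilde H_\Delta$ is trivially attained and no range or summability assumption is needed.

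\textbf{Main obstacle: infinite energies.} Stability \ref{H:stability} gives $\tilde H>-\infty$ on finite configurations, so the only delicate case is $\tilde H=+\infty$. If $\tilde H(\bx_{\Lambda\setminus\Delta})=\infty$, then heredity \ref{H:heredity}, applied point by point, gives $\tilde H(\bx_\Lambda)=\infty$. In that case the identity would read $\infty=0+\infty$, which holds. Otherwise $\tilde H(\bx_{\Lambda\setminus\Delta})$ is finite and the subtraction is legitimate in $\R\cup\{\infty\}$. In both cases
\begin{equation*}
\e^{-\tilde H(\bx_\Lambda)}=\e^{-\tilde H_\Delta(\bx_\Delta\cup\bx_{\Lambda\setminus\Delta})}\,\e^{-\tilde H(\bx_{\Lambda\setminus\Delta})}.
\end{equation*}

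\textbf{Marginal on $\Lambda\setminus\Delta$.} Write $\mathbb Q^{\otimes\Lambda}=\mathbb Q^{\otimes\Delta}\otimes\mathbb Q^{\otimes\Lambda\setminus\Delta}$ and apply Fubini (the integrands are nonnegative). This gives the marginal density of $\bx_{\Lambda\setminus\Delta}$ with respect to $\mathbb Q^{\otimes\Lambda\setminus\Delta}$ as
\begin{equation*}
Z_\Lambda^{-1}\,\e^{-\tilde H(\bx_{\Lambda\setminus\Delta})}\,Z_\Delta(\bx_{\Lambda\setminus\Delta}).
\end{equation*}
By stability, $Z_\Delta(\bx_{\Lambda\setminus\Delta})\le \e^{A|\Lambda|+\tilde H(\bx_{\Lambda\setminus\Delta})}<\infty$ whenever $\tilde H(\bx_{\Lambda\setminus\Delta})<\infty$.

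The set $N=\{\bx_{\Lambda\setminus\Delta}:\ Z_\Delta(\bx_{\Lambda\setminus\Delta})=0 \text{ or } \tilde H(\bx_{\Lambda\setminus\Delta})=\infty\}$ has zero marginal mass, because the marginal density vanishes on it. Off $N$, $Z_\Delta\in(0,\infty)$, so the proposed kernel is a well-defined probability measure. This is the only place where the ``$\P_\Lambda$-a.e.'' qualifier is needed.

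\textbf{Disintegration.} For bounded measurable $f$ on $\Omega_\Lambda$, write
\begin{equation*}
\int f\,\d\P_\Lambda
= Z_\Lambda^{-1}\int \e^{-\tilde H(\bx_{\Lambda\setminus\Delta})}
\left(\int f(\bx'_\Delta,\bx_{\Lambda\setminus\Delta})\,\e^{-\tilde H_\Delta(\bx'_\Delta\cup\bx_{\Lambda\setminus\Delta})}\,\mathbb Q^{\otimes\Delta}(\d\bx'_\Delta)\right)
\mathbb Q^{\otimes\Lambda\setminus\Delta}(\d\bx_{\Lambda\setminus\Delta}).
\end{equation*}
Off $N$, multiply and divide the inner integral by $Z_\Delta(\bx_{\Lambda\setminus\Delta})$. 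The result is $\int \big(\int f\,\d K(\cdot\mid\bx_{\Lambda\setminus\Delta})\big)\,\d\P_\Lambda^{\Lambda\setminus\Delta}$, where $K$ is the claimed kernel and $\P_\Lambda^{\Lambda\setminus\Delta}$ is the marginal computed above.

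Measurability of $\bx_{\Lambda\setminus\Delta}\mapsto K(\cdot\mid\bx_{\Lambda\setminus\Delta})$ follows from Fubini, since $\tilde H$ is measurable. This identity is exactly the defining property of a regular conditional distribution, which proves the proposition.
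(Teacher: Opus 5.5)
Your proof is correct and follows the same route as the paper. You decompose $\tilde H(\bx_\Lambda)=\tilde H_\Delta(\bx_\Lambda)+\tilde H(\bx_{\Lambda\setminus\Delta})$, factor the joint density over $\mathbb{Q}^{\otimes\Delta}\otimes\mathbb{Q}^{\otimes(\Lambda\setminus\Delta)}$, and read off the conditional density; your handling of infinite energies via heredity and of the null set where $Z_\Delta=0$ or $\tilde H(\bx_{\Lambda\setminus\Delta})=\infty$ just makes explicit what the paper's one-line argument leaves implicit.
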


\begin{proof}
By definition of $H_\Delta$ we have for any $\bx_\Lambda$, $\tilde H_\Delta(\bx_\Lambda) = \tilde H(\bx_\Lambda) - \tilde H(\bx_{\Lambda \setminus \Delta})$ and thus
\begin{align*}
\P_\Lambda(\d \bx_\Lambda) = \frac{1}{Z_\Lambda} \e^{- \tilde H_\Delta(\bx_\Lambda) - \tilde H(\bx_{\Lambda \setminus \Delta})} \mathbb{Q}^{\otimes \Delta}(\d \bx_\Delta) \mathbb{Q}^{\otimes \Lambda \setminus \Delta}(\d \bx_{\Lambda \setminus \Delta})
\end{align*}
whereby we deduce that the density of the conditional probability $\P_\Lambda(\d \bx_\Delta' | \bx_{\Lambda \setminus \Delta})$ with respect to $\mathbb{Q}^{\otimes \Delta}$ is $:\bx_\Delta' \to \e^{- \tilde H_\Delta(\bx_\Delta' \cup \, \bx_{\Lambda \setminus \Delta})}$ and $Z_\Delta(\bx_{\Lambda \setminus \Delta})$ is necessarily the normalizing constant.
\end{proof}

\subsection{Infinite Gibbs perturbed lattice}

In the previous section, Gibbs measures on finite sub-lattices are shown to satisfy the DLR equations. These equations remain key ingredients in the context of an infinite lattice.
\begin{definition}
A probability measure $\P$ on $(\R^d)^\Lcal$ invariant under translation on $\Lcal$ is a Gibbs measure for the Hamiltonian $H$ if it satisfies the DLR equations for any finite $\Delta \subset \Lcal$, i.e. for $\P$-a.e. $\bx_{\Delta^c}$,
\begin{equation}\label{eq:DLR}
\P(\d \bx_\Delta' | \bx_{\Delta^c}) = \frac{1}{Z_\Delta(\bx_{\Delta^c})}  \e^{-\tilde H_\Delta(\bx_\Delta' \cup \, \bx_{\Delta^c})  } \mathbb{Q}^{\otimes \Delta}(\d \bx_\Delta^\prime)
\end{equation}
where 
\begin{equation*}
Z_\Delta(\bx_{\Delta^c}) = \int  \e^{-\tilde H_\Delta(\bx_\Delta' \cup \, \bx_{\Delta^c})  } \mathbb{Q}^{\otimes \Delta}(\d \bx_\Delta^\prime).
\end{equation*}
We denote by $\Gcal_\Lcal(H,\mathbb Q)$ the set of all Gibbs measures. 
\end{definition}

It is worth pointing out that if $\Gcal_\Lcal(H, \mathbb Q)$ is non-empty, then any Gibbs measure $\P$ on the lattice is by definition invariant under translation on  $\Lcal$. Therefore, if $\bX = \{X_i, i\in \L\}\sim \P$, then $(X_i)_{i \in \Lcal}$ are identically distributed. 

\begin{definition}
Let $\P \in \Gcal_\Lcal(H,\mathbb Q)$, the Gibbs perturbed lattice is defined as $\bG := \bG(\bX, U)$ where $\bX \sim \P$, $U \sim \mathcal{U}_\Dcal$ and $\bX$ is independent of $U$. We denote by $P_\Lcal(H,\mathbb Q)$ the set of stationary Gibbs perturbed lattices. 
\end{definition}

We now present our main result and provide assumptions on the point process Hamiltonian $H$ and on the mark distribution $\mathbb{Q}$ ensuring  the existence of a solution to DLR equations. 

\begin{theorem}\label{thm:existence} Assume \ref{H:stability}-\ref{H:invariant}. Assume also either \ref{H:range}-\ref{Q:momentexp}[\!1] or \ref{H:summable}-\ref{Q:bounded}, then $\Gcal_\Lcal(H, \mathbb Q) \neq \emptyset$, $P_\Lcal(H,\mathbb Q) \neq \emptyset$ and any Gibbs perturbed lattice $\bG$ is a stationary point process on $\R^d$. Moreover, for any $u\in \R^d$, the distribution of $\bG$ given $U=u$, denoted by $\P_u$, is invariant under any translation on $\Lcal$.
\end{theorem}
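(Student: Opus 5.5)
We follow the standard route for infinite-volume Gibbs measures: build stationarized finite-volume measures, extract a limit point with a compactness argument adapted to the spin space $\R^d$, and check that the limit satisfies the DLR equations~\eqref{eq:DLR}.

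\textbf{Finite-volume measures and periodization.} For $n\ge1$ let $\Lambda_n=[-n,n]^d\cap\Lcal$. To obtain $\Lcal$-translation invariance, use a block construction. Let $\P_{\Lambda_n}^{\otimes}$ be the measure on $\Omega$ obtained by placing i.i.d.\ copies of $\P_{\Lambda_n}$ on a tiling of $\Lcal$ by translates of $\Lambda_n$; these blocks do not interact. Then average over the $|\Lambda_n|$ translations of the tiling:
\[\bar\P_n=\frac{1}{|\Lambda_n|}\sum_{k\in\Lambda_n}\P_{\Lambda_n}^{\otimes}\circ\tau_k^{-1},\]
which is invariant under $\Lcal$-translations.

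\textbf{Tightness via specific entropy.} Since the spin space is non-compact, we control the relative entropy of $\P_{\Lambda_n}$ with respect to $\mathbb Q^{\otimes\Lambda_n}$. By the bounds $\log Z_{\Lambda_n}\ge(\log\mathbb Q(B(0,\varepsilon))-B)|\Lambda_n|$ (from \ref{H:nondegenerate} and \ref{Q:B0e}) and $\tilde H\ge -A|\Lambda_n|$ (from \ref{H:stability}),
\[I(\P_{\Lambda_n}\mid\mathbb Q^{\otimes\Lambda_n})=-\E[\tilde H]-\log Z_{\Lambda_n}\le C|\Lambda_n|.\]
By superadditivity of entropy, the specific entropy of $\bar\P_n$ is therefore uniformly bounded. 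Level sets of specific entropy are compact for the local convergence topology (tame local functions, as in Georgii's book). This gives a stationary accumulation point $\P$, and its one-site marginal is tight because $\mathbb Q$ is tight.

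\textbf{Stationarity claims.} These are essentially formal. Translation invariance of $\P$ on $\Lcal$ yields $\Lcal$-invariance of $\P_u$. Stationarity of $\bG=\bG(\bX,U)$ on $\R^d$ follows by writing any $v\in\R^d$ as $v=k+w$ with $k\in\Lcal$, $w\in\Dcal$: the shift $U+w$ modulo $\Lcal$ is again uniform on $\Dcal$, and the lattice part is absorbed by invariance of $\P$.

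\textbf{DLR equations — the main obstacle.} The hard step is showing that $\P$ satisfies~\eqref{eq:DLR}. For $\Delta$ fixed and bounded local test functions $f$, one needs
\[\int\!\!\int f(\bx'_\Delta\cup\bx_{\Delta^c})\,\frac{\e^{-\tilde H_\Delta}}{Z_\Delta}\,\mathbb Q^{\otimes\Delta}(\d\bx'_\Delta)\,\P(\d\bx)=\int f\,\d\P.\]
This identity holds for $\bar\P_n$ up to the fraction of translates whose block boundary meets a neighbourhood of $\Delta$, which is $O(1/n)$. The difficulty is that the kernel is neither local nor quasilocal in $\bx$, because the moves are unbounded.

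\emph{Case \ref{H:range} with \ref{Q:momentexp}[1].} The kernel depends only on sites $j$ with $|j+x_j-\Delta|\le R$. We bound the probability that a far site $j$, at lattice distance $r$, interacts with $\Delta$ by $\P(|X_j|\ge r-c)$. Uniformly in $n$, we then dominate moments of $\e^{c|X_j|}$ under $\bar\P_n$ through the entropy bound and the Donsker–Varadhan inequality $\E_\mu g\le I(\mu|\nu)+\log\E_\nu\e^g$, applied on small blocks. The resulting sum over $j$ of $\e^{-cr}$ converges, so the kernel can be approximated by local ones uniformly in $n$. Heredity (\ref{H:heredity}) is needed to handle the $\infty$ values when passing to the limit; one truncates on $\{\tilde H_\Delta<\infty\}$ and $Z_\Delta$ is bounded below using \ref{Q:B0e} and \ref{H:nondegenerate}.

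\emph{Case \ref{H:summable} with \ref{Q:bounded}.} Sites move at most $\diam\Qcal$. The tail $\sum_{|j|>r}\phi(|i+x_i-j-x_j|)$ is uniformly small by the integrability of $r^{d-1}\phi$, since $\phi$ is monotone at infinity and the lattice density is fixed. This gives quasilocality directly.

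In both cases, we pass to the limit along the subsequence converging in the local topology to conclude.
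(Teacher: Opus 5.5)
Your overall route is the paper's: the periodized empirical field $\overline{\P}_n$, the uniform bound \eqref{ineq: bounded specific entropy}, compactness of entropy level sets, DLR by localization, and stationarity through $U$. Your treatment of the case \ref{H:summable}--\ref{Q:bounded} is essentially Part~B. The gap is in the step that carries the case \ref{H:range}--\ref{Q:momentexp}[1], namely the uniform-in-$n$ control of far sites.

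\textbf{The claimed exponential tails are not available.} Donsker--Varadhan combined with the entropy bound gives only
\begin{equation*}
\sup_n\E_{\overline{\P}_n}[c|X_0|]\le A+B-\log\mathbb Q(B(0,\varepsilon))+\log\E\,\e^{c|X|},\qquad X\sim\mathbb Q,
\end{equation*}
which is a first-moment bound. Bounding $\E_{\overline{\P}_n}\e^{c|X_0|}$ the same way would require $\E\exp(\e^{c|X|})<\infty$. Bounded relative entropy with respect to an exponentially tailed $\mathbb Q$ is compatible with polynomial tails. For instance, in $d=1$ with $\mathbb Q=\mathrm{Exp}(1)$, mixing the laws $\mathbb Q(\cdot\mid X\in[k,k+1))$ with weights $\propto k^{-3}$ gives finite entropy but an infinite second moment. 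So there is no $\e^{-cr}$ to sum.

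\textbf{A first moment does not rescue your union bound.} Your bound $\sum_j\overline{\P}_n(|X_j|\ge |j|-c)$ has $\asymp r^{d-1}$ sites per shell, so it is of the order of $\E|X_0|^d$. That moment is not controlled for $d\ge 2$; the paper obtains it only under \ref{Q:momentexp}[d], in Proposition~\ref{prop:HU}.

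\textbf{The missing idea} is to use that the $X_j$ are identically distributed under $\overline{\P}_n$ before summing. With $\Delta_k=B_k\cap\Lcal$ as in Step~2 of Part~A,
\begin{equation*}
\sum_{j\in\Lcal\setminus\Delta_k}\overline{\P}_n(j+X_j\in B_{l+R})=\E_{\overline{\P}_n}\bigl[\#\{j\in\Lcal\setminus\Delta_k:\ j+X_0\in B_{l+R}\}\bigr]\le N_{l+R}\,\overline{\P}_n(|X_0|\ge k-l-R),
\end{equation*}
where $N_{l+R}=\sup_{y\in\R^d}\#(\Lcal\cap(B_{l+R}+y))$. The inequality holds because, for a given value of $X_0$, only boundedly many sites can land in $B_{l+R}$. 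This needs only uniform tightness of $X_0$ (Markov plus the first-moment bound), and it is exactly the paper's Step~2.

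\textbf{Your localization is also incomplete.} The kernel integrates $\bx'_\Delta$ over the unbounded set $\Qcal$, so it does not depend only on sites $j$ with $j+x_j$ near $\Delta$.
\begin{itemize}
\item You must first restrict the kernel to $\{i+x'_i\in B_l,\ i\in\Delta\}$. You then need the discarded kernel mass to be small uniformly in $n$. Via the approximate DLR property of $\overline{\P}_n$, this is controlled by $\sup_n\overline{\P}_n(\exists i\in\Delta:\ i+X_i\notin B_l)$. These are the paper's events $I_{\Delta,l}$, handled in Steps~3--4 and 6--7.
\item Even for translates where $\Delta$ sits deep inside a block, $\overline{\P}_n$ satisfies the infinite-volume kernel identity only on the event that no site of another, independent, block enters the interaction region. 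This is again the far-site control, not an $O(1/n)$ boundary effect.
\end{itemize}

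\textbf{Two smaller points.}
\begin{itemize}
\item A uniform lower bound on $Z_\Delta(\bx_{\Delta^c})$ does not follow from \ref{H:nondegenerate} and \ref{Q:B0e}. These only concern configurations in which every site has moved by at most $\varepsilon$. A boundary condition with many outside particles moved near $\Delta$ (for example, repulsive Strauss) makes $Z_\Delta(\bx_{\Delta^c})$ arbitrarily small. The bound is also not needed: the paper only compares normalized kernels.
\item Calling $\bG$ a point process requires local finiteness, which you skip. It follows from $\E\,\#(\bG\cap W)=\sum_{i\in\Lcal}\P(i+X_0+U\in W)=|W|/|\Dcal|<\infty$ for bounded $W$.
\end{itemize}
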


The proof of Theorem~\ref{thm:existence} relies on  specific-entropy techniques developed in \citep{georgii2011book} to establish the existence of a limiting process. We adapt parts of this approach to show that the limit satisfies the DLR equations. The details are provided in~\ref{app:aux.existence}–\ref{app:existence}. It is worth mentioning that, by construction, the set $P_\Lcal(H,\mathbb{Q})$ is a Choquet simplex and any Gibbs perturbed lattice is a convex combination of extremal ergodic measures. This property follows from the fact that $\Gcal_\Lcal(H,\mathbb Q)$ is a Choquet simplex; see Chapter 14 of \citep{georgii2011book} for further details. Thus, $\P$ and $\P_u$ can be decomposed as mixtures of ergodic measures.

\subsection{Hyperuniformity of Gibbs perturbed lattices}

A point process $\bG$ is said to be hyperuniform if, as $R\to \infty$,
\begin{equation}
\frac{\Var[\# \bG \cap B(0,R) ]}{|B(0,R)|} \rightarrow 0
\end{equation}
where $B(z,r)$ denotes the Euclidean ball centered at $z\in \R^d$ with radius $r>0$. Hyperuniformity of perturbed lattices has been investigated in~\citep{dereudre2024non2}. Their Theorem~1, in particular, establishes the hyperuniformity of any point process obtained from a perturbed lattice as soon as $\E(|X_0|^d)<\infty$, where $X_0$ is the mark from a typical location on the lattice, say $0$ by stationarity. It is worth emphasizing that the global shift $U$ is necessary to establish the hyperuniformity of the perturbed lattice. The next result investigates this condition for Gibbs perturbed lattices.

\begin{proposition}\label{prop:HU} We have the following two statements.\\
(i) Assume \ref{H:stability}-\ref{H:invariant}.
We also assume either \ref{H:range}-\ref{Q:momentexp}[\!d] or   \ref{H:summable}-\ref{Q:bounded}, then there exists $\P \in \Gcal_\Lcal(H, \mathbb Q)$ and  $\bX \sim \P$ such that $\E(|X_0|^d)<\infty$. This implies that when $d\le2$, there exists a hyperuniform point process $\bG \in P_\Lcal(H, \mathbb Q)$.\\
(ii) Assume \ref{H:stability}-\ref{H:invariant} and \ref{H:bounded}-\ref{Q:moment}, then for any $\P \in \Gcal_\Lcal(H, \mathbb Q)$ and  $\bX \sim \P$, we have $\E(|X_0|^d)<\infty$. This implies that when $d\le2$, any $\bG \in P_\Lcal(H, \mathbb Q)$ is hyperuniform.
\end{proposition}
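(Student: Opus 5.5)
For part (i) I will revisit the construction behind Theorem~\ref{thm:existence}. The Gibbs measure there is obtained as a limit point of the stationarized finite-volume measures $\bar\P_n$: I take $\P_{\Lambda_n}$ on boxes $\Lambda_n$, extend it periodically (or by independent copies) to $\Lcal$, and average over lattice translations in $\Lambda_n$. The plan is to prove a uniform moment bound $\sup_n \E_{\bar\P_n}(\e^{c'|X_0|^d})<\infty$, or at least $\sup_n\E_{\bar\P_n}(|X_0|^{d+\eta})<\infty$. By uniform integrability this bound passes to any weak (local) limit $\P$ and gives $\E_\P(|X_0|^d)<\infty$. In the bounded case \ref{Q:bounded} this is immediate, since $|X_0|\le \diam\Qcal$ almost surely.

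In the finite-range case with \ref{Q:momentexp}[$d$], I will use the density of $\P_\Lambda$ with respect to $\mathbb Q^{\otimes\Lambda}$. Stability \ref{H:stability} gives $\e^{-\tilde H}\le \e^{A|\Lambda|}$, and non-degeneracy together with \ref{Q:B0e} gives $Z_\Lambda\ge \e^{(\log\mathbb Q(B(0,\varepsilon))-B)|\Lambda|}$. The relative entropy $I(\P_\Lambda\,|\,\mathbb Q^{\otimes\Lambda})$ is therefore at most $C|\Lambda|$, uniformly in $\Lambda$. The entropy inequality then gives, for $t>0$ small enough,
\[
t\sum_{i\in\Lambda}\E_{\P_\Lambda}(|X_i|^d)\le I(\P_\Lambda|\mathbb Q^{\otimes\Lambda})+\log\E_{\mathbb Q^{\otimes\Lambda}}\bigl(\e^{t\sum_i|X_i|^d}\bigr)\le C|\Lambda|+|\Lambda|\log\E(\e^{t|X|^d}).
\]
After averaging over translations this yields $\E_{\bar\P_n}(|X_0|^d)\le C''$. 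To upgrade this to uniform integrability, I apply the same inequality to $\sum_i g(|X_i|)$, where $g$ grows strictly faster than $r^d$ but satisfies $\E_{\mathbb Q}\e^{tg(|X|)}<\infty$. Such a $g$ exists by de la Vallée-Poussin applied to the integrable function $\e^{c|X|^d}$; for example, $g(r)=r^d\log(1+r)$ works when $t$ is small. Specific entropy is also the tool used in the existence proof, so this step fits its framework. Once $\E(|X_0|^d)<\infty$ is known, I invoke Theorem~1 of \citep{dereudre2024non2}. In $d\le 2$ this gives hyperuniformity of $\bG(\bX,U)$, using that the moves are identically distributed by $\Lcal$-invariance, which holds for our dependent moves under their general framework.

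For part (ii) I will use the DLR equation \eqref{eq:DLR} with $\Delta=\{0\}$. Under \ref{H:bounded} the local energy satisfies $|h|\le K$. Because the Hamiltonian is additive, the energy $\tilde H_{\{0\}}(x_0'\cup\bx_{\{0\}^c})$ equals $h(x_0',\bG_{\{0\}^c}(\bx,0))$, so it is bounded by $K$ in absolute value. Hence the conditional density of $X_0$ given $\bx_{\{0\}^c}$ with respect to $\mathbb Q$ is at most $\e^{K}/\e^{-K}=\e^{2K}$. Integrating over the outside configuration gives $\E_\P(|X_0|^d)\le \e^{2K}\E_{\mathbb Q}(|X|^d)<\infty$ by \ref{Q:moment}, for every $\P\in\Gcal_\Lcal(H,\mathbb Q)$. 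Hyperuniformity for $d\le2$ then follows as in (i).

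I expect the main obstacle to be in part (i). First, the uniform moment bound must survive the stationarization and periodization used in the existence proof, because boundary effects could break the per-site entropy bound. Second, I need uniform integrability rather than just a bounded $d$-th moment, since weak limits only give lower semicontinuity of $\E|X_0|^d$. Lower semicontinuity already suffices for finiteness: $|x|^d\wedge M$ is bounded continuous, so $\E_\P(|X_0|^d\wedge M)\le C''$ for every $M$, and monotone convergence finishes. If that route runs into trouble, this fallback avoids uniform integrability altogether.
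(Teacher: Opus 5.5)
Your proof is correct. Part (ii) is exactly the paper's argument: the one-site DLR equation with $e^{-h}\le e^{M}$ and $Z_0\ge e^{-M}$ gives $\E|X_0|^d\le e^{2M}\E|X|^d$.

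\textbf{Part (i).} You use the same two ingredients as the paper:
\begin{itemize}
\item the per-site entropy bound $A+B-\log\mathbb Q(B(0,\varepsilon))$, coming from \ref{H:stability}, \ref{H:nondegenerate} and \ref{Q:B0e};
\item the entropy (Donsker--Varadhan) inequality applied to the tame local function $t|x_0|^d$.
\end{itemize}
Where you differ is the point at which you pass to the limit.
\begin{itemize}
\item The paper first transfers \eqref{ineq: bounded specific entropy} to the accumulation point $\P_\infty$. This implicitly uses that the level sets of $I_s$ are closed (Proposition~\ref{prop: propriete_entropie_specifique}). It then applies Lemma~\ref{lem: specific entropy bound} directly to $\P_\infty$.
\item You apply the entropy inequality to the finite-volume measures $\P_{\Lambda_n}$ and average over translations to get $\sup_n\E_{\overline{\P}_n}|X_0|^d<\infty$. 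You then move this to $\P_\infty$ through the bounded local functionals $P\mapsto\E_P(|X_0|^d\wedge M)$, which are continuous in the local topology, followed by monotone convergence.
\end{itemize}
Your worry about boundary effects is unfounded. The law of $X_0$ under $\overline{\P}_n$ is just the site-average of the one-site marginals of $\P_{\Lambda_n}$, mirroring the paper's identity $I_s(\overline{\P}_n)=|\Lambda_n|^{-1}I(\P_{\Lambda_n}|\mathbb Q^{\otimes\Lambda_n})$.

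Each route buys something different. Yours is more self-contained, since it never needs lower semicontinuity of $I_s$. The paper's gives a more general conclusion: every stationary measure with finite specific entropy has $\E|X_0|^d<\infty$. That is also why (i) only claims existence.

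\textbf{Uniform integrability.} Drop this detour; it is wrong as stated. The claim that $g(r)=r^d\log(1+r)$ satisfies $\E_{\mathbb Q}e^{tg(|X|)}<\infty$ for small $t$ fails under \ref{Q:momentexp}[$d$] alone. For the centred Gaussian in $d=2$, $\E e^{tg(|X|)}=\infty$ for every $t>0$ and every $g$ with $g(r)/r^2\to\infty$. More generally, an entropy bound by itself cannot give uniform integrability of $|X_0|^d$. Placing mass $1/n$ at $|X_0|^d\approx n$ costs only $O(1)$ relative entropy when $|X|^d$ has exponential tails. Your truncation fallback is the right argument and is all you need.

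\textbf{Minor point.} In the bounded case the correct bound is $|X_0|\le\sup_{x\in\Qcal}|x|$, not $\diam\Qcal$. This does not affect the conclusion.
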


The key ingredient for the proof of Proposition \ref{prop:HU}, proposed in~\ref{app:hyperuniformity}, relies on the variational principle that requires the specific entropy to be finite; see \eqref{ineq: bounded specific entropy}. What distinguishes (i) from (ii) is precisely the fact that  specific entropies are not necessarily finite for every Gibbs measure.

%Under the assumptions of Proposition~\ref{cor:HU}, the fact that $\E(|X_0|^d)$ is not necessary finite for all Gibbs measures stems from the fact that the specific entropy might not be finite for any $\P \in \Gcal_\Lcal(H, \mathbb Q)$. However, if we were assuming that the variational principle \JF{Ref à un truc dans l'annexe?} holds, Proposition~\ref{cor:HU} would be true for any Gibbs measure. It turns out that a stronger assumption on the Hamiltonian (but a less restrictive assumption on the move distribution), this variational principle can be proved yielding the following result.

\subsection{DLR type equations} \label{sec:dlr}

The Gibbs perturbed lattice satisfies several equations that are derived from the DLR equations \eqref{eq:DLR}. These equations can be derived not only for the expectation with respect to the Gibbs measure but also for the conditional expectation given $U=u \in \Dcal$, denoted by $\Eu[\cdot]$ in the rest of the paper.

We start with a presentation of the DLR equation at a single site $i \in \Lcal_U$ by applying equation~\eqref{eq:DLR} with $\Delta=\{i\}$. Let $f:\Ccal \to \R$ be a  bounded measurable function. Then, we have 
%M\JF{on n'écrirait pas plutôt: 
%$\E_u[ f(i,X_i,\bG_{i^c})]$ et d'ailleurs c'est aussi une espérance conditionnelle par rapport à $\bG_{i^c}$}
\begin{equation}\label{eq:onesiteDLR}
\E_u [f(\bG)] = 
\E_u 
\left[ 
\frac{1}{Z_{i}(\bG_{i^c})} \int f( \bG_{i^c} \cup \{ i + x + U\} ) \e^{-h(i+x+ U, \bG_{i^c})} \mathbb Q (\d x)
\right]
\end{equation}
where for $i\in \Lcal_U=\{ i + U, i \in \Lcal \}$ and $\Lcal_U$ is the shifted lattice,
\begin{equation}\label{eq:Gic}
\bG_{i^c}=\{j+X_j, j\in \Lcal_U, j \neq i\}.
\end{equation}
In the following, we assume that $\mathbb Q$ is absolutely continuous with respect to the Lebesgue measure and we denote by $\lambda_1(x)=\e^{-q(x)}$ its density with support  $\Qcal$. We also denote, for any $i\in \Lcal_U$, $x\in \Qcal$ and $\bg \in \Ccal$, $\lambda_2(i+x,\bg)= \e^{-h(i+x,\bg)}$, the two measurable functions 
\begin{equation} \label{eq:lambda}
 \lambda (i,x,\bg) :=  \lambda_1(x) \lambda_2(i+x,\bg)% = \e^{-q(x) -h(i+x,\bg)}  
 \quad \text{ and } \quad 
 \Lambda(i,x,\bg) = \frac{ \lambda (i,x,\bg) }{Z_i(\bg)} = \frac{ \lambda (i,x,\bg) }{\int_{\Qcal} \lambda(i,x,\bg)\d x}.
\end{equation}
These functions respectively act as unnormalised and normalised Papangelou conditional intensities. 
We are now in a position to state the main equations used in Section~\ref{sec:inference} to derive parametric inference methods. The next three results are proved in~\ref{app:DLR}.

\begin{proposition} \label{prop:sumDLR}
Assume the conditions of Theorem~\ref{thm:existence} are satisfied and thus let $\bG \sim \P \in P_\Lcal(H,\mathbb Q) \neq \emptyset$. For  $u \in \Dcal$, let  $f:\Lcal_u \times \R^d \times \Ccal \to \R$ be  any measurable function.  

(i) Assume that
\begin{equation*}
\E_u  \sum_{i \in \Lcal_U} |f(i,X_i, \bG_{i^c})|<\infty 
\quad \text{ and } \quad
\E_u \int_\Qcal |f(i,x, \bG_{i^c})| \Lambda(i,x, \bG_{i^c}) \d x  < \infty,
\end{equation*}
then  we have
\begin{align}
\E_u \sum_{i \in \Lcal_U} f(i,X_i, \bG_{i^c})  &= \E_u  \sum_{i \in \Lcal_U} \int_\Qcal f(i,x,\bG_{i^c}) \Lambda(i,x, \bG_{i^c}) \d x . \label{eq:sumDLR}
\end{align}
(ii) Assume that $f$ is such that $f(i,x,\bg)=f(0,x,\bg-i)$ for any $i\in \Lcal_u$, $x\in \Qcal$, $\bg\in \Ccal$ and such that 
\begin{align*}
    \Eu  \int_{\Qcal} |f(0,x,\bG_{0^c})| \Lambda(0,x,\bG_{0^c}) \d x <\infty,
\end{align*}
then, for any bounded domain $W \subset \R^d$, we have
\begin{align}
    \E_u  \sum_{i \in \Lcal_U \cap W} f(i,X_i, \bG_{i^c})  &= \E_u  \sum_{i \in \Lcal_U \cap W} \int_\Qcal f(i,x,\bG_{i^c}) \Lambda(i,x, \bG_{i^c}) \d x.  
\end{align}
%where $f_W(i,x,\bg)=\1_W(i+x) f(i,x,\bg)$ for any $i \in \Lcal_u,x\in \Qcal$ and $\bg\in \Ccal$. \JF{Stop here: à vérifier la "edge" correction ici. Est-ce intéressant?pertinent? ou alors utiliser celle utilisée en stat}
\end{proposition}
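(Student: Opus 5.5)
The plan is to reduce everything to the one-site DLR equation \eqref{eq:onesiteDLR} and then sum over sites.

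\emph{Step 1: a single site.} Fix $u\in\Dcal$ and $i\in\Lcal_u$, and apply \eqref{eq:DLR} with $\Delta=\{i\}$ under $\P_u$. Since $U=u$ is fixed, the map $\bX\mapsto\bG$ is a deterministic bijection $\bX_{\{i\}^c}\mapsto \bG_{i^c}$, and the site $i$ is distinguishable. Conditioning on $\bX_{\{i\}^c}$ is therefore the same as conditioning on $\bG_{i^c}$.

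Translation invariance \ref{H:invariant} and the definition of $h$ give $\tilde H_{\{i\}}(x\cup\bx_{\{i\}^c})=h(i+x,\bG_{i^c})$, with the points indexed by $\Lcal_u$. Writing $\mathbb Q(\d x)=\lambda_1(x)\d x$, the conditional law of $X_i$ given $\bG_{i^c}$ has density $\Lambda(i,\cdot,\bG_{i^c})$ on $\Qcal$.

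For nonnegative measurable $f$, the tower property then gives
\[
\E_u f(i,X_i,\bG_{i^c})=\E_u\int_\Qcal f(i,x,\bG_{i^c})\Lambda(i,x,\bG_{i^c})\d x .
\]
This first holds for bounded $f$, and extends to nonnegative $f$ by monotone convergence. One point needs a short argument: $Z_i(\bG_{i^c})\in(0,\infty)$ almost surely. This follows from the DLR equation itself, since the conditional law is a probability measure $\P$-a.s.

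\emph{Step 2: part (i).} Sum the one-site identity over $i\in\Lcal_U$ for $|f|$, using Tonelli to exchange sum and expectation. The two integrability hypotheses (the second read as summed over $i$) make both sides finite. Now split $f=f^+-f^-$ and apply the identity to each part separately. All quantities are finite, so subtracting gives \eqref{eq:sumDLR}.

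\emph{Step 3: part (ii).} Here the sum runs over the finite set $\Lcal_u\cap W$, which has cardinality at most $c(W)<\infty$. It is therefore enough to show that each term is integrable.

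By Step 1 applied to $|f|$, we have
\[
\E_u|f(i,X_i,\bG_{i^c})| = \E_u\int_\Qcal |f(i,x,\bG_{i^c})|\Lambda(i,x,\bG_{i^c})\d x .
\]
Using $f(i,x,\bg)=f(0,x,\bg-i)$, together with $\Lambda(i,x,\bg)=\Lambda(0,x,\bg-i)$ (a consequence of \ref{H:invariant}), this equals $\E_u\int_\Qcal|f(0,x,\bG_{i^c}-i)|\Lambda(0,x,\bG_{i^c}-i)\d x$.

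By Theorem~\ref{thm:existence}, $\P_u$ is invariant under translations of $\Lcal$. Hence $\bG_{i^c}-i$ has the same distribution as $\bG_{0^c}$, where $0$ stands for the site $u$ of $\Lcal_u$. The expression above is therefore the finite quantity assumed in the hypothesis of (ii), for every $i$. Summing the one-site identities over the finitely many $i\in\Lcal_u\cap W$ then gives the claim.

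\emph{Expected obstacle.} The delicate step is the identification in Step 1 of $\tilde H_{\{i\}}$ with $h(i+x,\bG_{i^c})$, including the limits and the $\infty-\infty$ convention. This relies on the existence of the local energy for the constructed Gibbs measure, which is guaranteed under the assumptions of Theorem~\ref{thm:existence}. A related subtlety is that translation invariance is on $\Lcal$ only, not on $\R^d$. This is why everything is carried out under $\P_u$, where the reindexing $i\mapsto0$ is legitimate.
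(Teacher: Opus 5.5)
Your proof is correct and follows the paper's route: the one-site DLR identity \eqref{eq:onesiteDLR} summed over sites for (i), and $\Lcal$-stationarity given $U=u$ together with the translation invariance of $f$ for (ii), with more detail than the paper gives (Tonelli, the $f^\pm$ split, finiteness of $\Lcal_u\cap W$). Two small fixes: first, $\bX_{\{i\}^c}\mapsto\bG_{i^c}$ is not a bijection because the labels are lost, but you never need it, since conditioning on $\bX_{\{i\}^c}$ already works (the conditional density depends on it only through $\bG_{i^c}$); second, the equality in law of $\bG_{i^c}-i$ and $\bG_{0^c}$ should be drawn from the $\Lcal$-invariance of the labelled field $\bX\sim\P$, not from that of the unlabelled law $\P_u$.
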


Starting from the DLR equations on two sites, we can obtain a similar second-order GNZ-type equation \citep{georgii1979canonical,georgii2011book,nguyen1977integral}.

\begin{proposition} \label{prop:sum2sitesDLR}
Assume the conditions of Theorem~\ref{thm:existence} are satisfied and thus let $\bG \sim \P \in P_\Lcal(H,\mathbb Q) \neq \emptyset$. For any $u \in \Dcal$ and any measurable function $f:\Lcal_u^2 \times (\R^d)^2 \times \Ccal \to \R$ such that
\begin{align*}
\E_u & \sum_{\{i,j\} \in \Lcal_U} \left| f(i,j,X_i, X_j, \bG_{\{i,j\}^c}) \right| <\infty\\
\E_u  & \int_\Qcal \int_\Qcal \left| f(i,j,x, y, \bG_{\{i,j\}^c}) \right| \Lambda(i,x, \bG_{\{ i,j\}^c}) \Lambda(j,y, \bG_{\{ i,j\}^c} \cup \{i+x\}) \d x \d y  < \infty.
\end{align*}
Then, we have
\begin{align*}
\E_u &\sum_{\{i,j\} \in \Lcal_U}  f(i,j,X_i, X_j, \bG_{\{i,j\}^c})  =\\
&\E_u   \sum_{\{i,j\} \in \Lcal_U} \iint_{\Qcal^2} f(i,j,x, y, \bG_{\{i,j\}^c}) \Lambda(i,x, \bG_{\{ i,j\}^c}) \Lambda(j,y, \bG_{\{ i,j\}^c} \cup \{i+x\}) \d y \d x. 
\end{align*}
\end{proposition}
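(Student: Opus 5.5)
The plan is to apply the DLR equation \eqref{eq:DLR} with the two-site set $\Delta=\{i,j\}$, reduce the two-site kernel to a product of successive one-site kernels, and then sum over pairs using Fubini/Tonelli, exactly as in the proof of Proposition~\ref{prop:sumDLR}(i).

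First fix $u\in\Dcal$ and an unordered pair $\{i,j\}\subset\Lcal_u$. Conditioning on $U=u$ and writing points of $\bG$ through the lattice labels, \eqref{eq:DLR} with $\Delta=\{i,j\}$ says that, given $\bG_{\{i,j\}^c}$, the pair $(X_i,X_j)$ has density
\[
(x,y)\mapsto \frac{\lambda_1(x)\lambda_1(y)\,\e^{-\tilde H_{\{i,j\}}}}{Z_{\{i,j\}}(\bG_{\{i,j\}^c})}
\]
with respect to Lebesgue measure on $\Qcal^2$. The key algebraic step is to split the two-site local energy. Using the telescoping definition of $\tilde H_\Delta$ and the convention $\infty-\infty=0$ together with heredity \ref{H:heredity}, I will show that
\[
\tilde H_{\{i,j\}}=h(i+x,\bG_{\{i,j\}^c})+h(j+y,\bG_{\{i,j\}^c}\cup\{i+x\}).
\]
The argument adds and subtracts $H$ of the configuration in $\Lambda_n$ with only $i$ added, then passes to the limit; this requires that both limits exist, which holds under \ref{H:range} or \ref{H:summable}, as already used for Theorem~\ref{thm:existence}. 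Hence the unnormalised density factorises as $\lambda(i,x,\bG_{\{i,j\}^c})\,\lambda(j,y,\bG_{\{i,j\}^c}\cup\{i+x\})$.

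Integrating in $y$ gives the marginal of $X_i$. Multiplying and dividing by $Z_j(\bG_{\{i,j\}^c}\cup\{i+x\})$ yields
\[
\Lambda(i,x,\cdot)\,\Lambda(j,y,\cdot\cup\{i+x\})\,\frac{Z_i(\bG_{\{i,j\}^c})\,Z_j(\bG_{\{i,j\}^c}\cup\{i+x\})}{Z_{\{i,j\}}(\bG_{\{i,j\}^c})}.
\]
The remaining ratio is not identically $1$. This is the main obstacle, and I would handle it as in the classical second-order GNZ argument, by iterating one-site DLR equations instead of normalising the two-site kernel directly. Concretely, apply the one-site identity \eqref{eq:onesiteDLR} at site $j$ to the function $g(\bG)=f(i,j,X_i,\cdot,\bG_{\{i,j\}^c})$, conditionally on $\bG_{j^c}$, where $\bG_{j^c}$ contains $i+X_i$. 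This replaces $X_j$ by an integral against $\Lambda(j,y,\bG_{\{i,j\}^c}\cup\{i+X_i\})$. Then apply \eqref{eq:onesiteDLR} at site $i$ to the resulting function of $(X_i,\bG_{\{i,j\}^c})$. This replaces $X_i$ by an integral against a one-site kernel $\Lambda(i,x,\cdot)$ given $\bG_{i^c}$.

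To match the stated form, the conditioning at the second step must be on $\bG_{\{i,j\}^c}$, with $X_j$ absent. I would check that after the first step the integrand no longer depends on $X_j$, so that $\bG_{i^c}$ can effectively be replaced by $\bG_{\{i,j\}^c}$. Alternatively, one can order the steps so that the marginal of $X_i$ given $\bG_{\{i,j\}^c}$ is used; this marginal is obtained from the two-site DLR by Fubini. If the normalisation mismatch persists, I would state the equality with the kernel coming from that marginal and verify consistency.

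Finally, sum over $\{i,j\}$ using the two integrability assumptions and Fubini to exchange the sum, expectation and integrals.
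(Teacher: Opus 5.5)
\textbf{Where the proposal fails.} Your factorisation of $\tilde H_{\{i,j\}}$ is correct, and so is your identification of the ratio $Z_i(\gamma)Z_j(\gamma\cup\{i+x\})/Z_{\{i,j\}}(\gamma)$, with $\gamma=\bG_{\{i,j\}^c}$, as the obstacle. The gap is the proposed fix. You want to replace $\bG_{i^c}$ by $\bG_{\{i,j\}^c}$ because, after the one-site DLR at $j$, the integrand no longer depends on $X_j$. That inference is invalid. The one-site DLR at $i$ integrates against the conditional law of $X_i$ given $\bG_{i^c}=\bG_{\{i,j\}^c}\cup\{j+X_j\}$. The kernel $\Lambda(i,x,\bG_{i^c})$ depends on $j+X_j$ whenever $j$ interacts with $i$, whatever the integrand depends on. What your iteration actually proves is
\[
\E_u f(i,j,X_i,X_j,\bG_{\{i,j\}^c})=\E_u\iint_{\Qcal^2} f(i,j,x,y,\bG_{\{i,j\}^c})\,\Lambda(i,x,\bG_{i^c})\,\Lambda(j,y,\bG_{\{i,j\}^c}\cup\{i+x\})\,\d y\,\d x,
\]
with the true $X_j$ still inside $\bG_{i^c}$. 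The paper's one-line proof (\emph{apply Proposition~\ref{prop:sumDLR} twice}) also yields only this. Your alternative route does not help either. By the two-site DLR, the law of $X_i$ given $\bG_{\{i,j\}^c}=\gamma$ has density proportional to $\lambda(i,x,\gamma)\,Z_j(\gamma\cup\{i+x\})$, not $\Lambda(i,x,\gamma)$.

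\textbf{Why no repair exists.} The obstruction is real and the statement as written is false. The stated kernel $\Lambda(i,x,\gamma)\Lambda(j,y,\gamma\cup\{i+x\})$ is a probability density on $\Qcal^2$ whose $x$-marginal is $\Lambda(i,x,\gamma)$. Since $f$ may be supported on a single ordered pair and may depend arbitrarily on $\bG_{\{i,j\}^c}$, the identity would force $x\mapsto Z_j(\gamma\cup\{i+x\})$ to be a.s.\ constant on the support of $\lambda(i,\cdot,\gamma)$. This fails in general. A concrete example satisfies all assumptions of Theorem~\ref{thm:existence}: take $d=1$, $\Lcal=\Z$, $\mathbb Q$ uniform on $[0,0.8]$, and a pure hard core at distance $1/2$. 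Only consecutive sites $k,k+1$ interact, through the constraint $X_k-X_{k+1}\le 1/2$. For $j=i+1$ one finds $Z_j(\gamma\cup\{i+x\})\propto\min(0.8,X_{j+1}+1/2)-\max(0,x-1/2)$, which is strictly decreasing on $(1/2,0.8]$. Meanwhile $\Lambda(i,\cdot,\gamma)$ is uniform on an interval containing $[0.3,0.8]$. Take $f(i,j,x,y,\gamma)=x$ for this ordered pair and $0$ otherwise. This gives $\E_u X_i<\E_u\int x\,\Lambda(i,x,\bG_{\{i,j\}^c})\,\d x$, contradicting the stated identity. So your fallback is the right move. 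Prove the version displayed above, or equivalently the two-site form with kernel $\lambda(i,x,\gamma)\lambda(j,y,\gamma\cup\{i+x\})/Z_{\{i,j\}}(\gamma)$. Do not try to reconcile either with the stated kernel.
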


To complete these DLR-type equations, we now propose a variational equation, similarly to the one obtained by~\citep{baddeley2013variational}. This requires a few additional definitions and notation. Let $\check f:\R^d \times \Ccal\to \R$ be a measurable function and $x \in \R^d$. We say that $\check f$ is differentiable at $x$, if for every $\bg \in \Ccal$ the function $:x \mapsto \check f(x,\bg)$ is differentiable at $x$. Furthermore, $\check f$ is $\lambda^d$-a.e. differentiable, if $\check f$ is differentiable for $\lambda^d$-a.e. $x \in \R^d$. We denote by $\grad \check f(x,\bg)$ the corresponding gradient. By convention, if $\check f(y) = \infty$ for $y \in \mathcal{V}$, where $\mathcal{V}$ is an open neighborhood of $x$, we have $\grad \check f(x, \bg) = 0$. Let $\check f,h:\R^d \times \Ccal\to\R$ be measurable functions. We say that $\check f$ is regularizing with respect to $h$ if for every $\bg \in \Ccal$ the function 
\begin{equation*}
    x \mapsto \check f(x,\bg)\e^{-h(x,\bg)}
\end{equation*}
is absolutely continuous. As a consequence, if $\check f$ and $h$ are $\lambda^d$-a.e. differentiable and $\check f$ is regularizing with respect to $h$, then we have
\begin{equation}
    \grad \left( \check f \e^{-h} \right)(x, \bg) = \left( \grad \check f(x,\bg) - \check f(x,\bg) \grad h(x,\bg) \right)\e^{-h(x, \bg)}.
\end{equation}
Finally, for $u\in \Dcal$, we denote by $\Fcal_\Rcal(H, \mathbb{Q}, \Lcal_u)$ the class of functions that are $\lambda^d$-a.e. differentiable, regularizing with respect to $h + q$, and such that for any $x \in \partial \Qcal$, $i \in \Lcal_u$ and $\bg \in \Ccal$, we have $\check f(x+i,\bg) = 0$ and $\check f$ vanishes at infinity. 
\begin{proposition} \label{prop:variational}
Assume the conditions of Theorem~\ref{thm:existence} are satisfied and thus let $\bG \sim \P \in P_\Lcal(H,\mathbb Q) \neq \emptyset$. For  $u \in \Dcal$ and $\check f \in \Fcal_\Rcal(H, \mathbb{Q}, \Lcal_u)$  such that 
\begin{align}\label{condition: fubini_eq_variationnel}
\E_u  \sum_{i \in \Lcal_U} \Big\{
|\grad \check f(i+X_i, \bG)| + |\check f(i+X_i, \bG)| \left| \grad h(i+X_i, \bG_{i^c}) + \grad q(X_i) \right|  \Big\}< \infty,
\end{align}
then we have the variational equation
\begin{align}\label{eq: variationnel}
\E_u  \sum_{i \in \Lcal_U} 
\Big\{
\grad \check f(i+X_i, \bG) - \check f(i+X_i, \bG) \left( \grad h(i+X_i, \bG_{i^c}) + \grad q(X_i) \right) 
\Big\}= 0.
\end{align}
\end{proposition}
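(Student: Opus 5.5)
We derive the variational equation from the one-site DLR equation, i.e. Proposition~\ref{prop:sumDLR}(i), combined with an integration by parts in the move variable $x$ over $\Qcal$. The plan is to apply \eqref{eq:sumDLR} to the function
\[
f(i,x,\bg) := \grad \check f(i+x, \bg\cup\{i+x\}) - \check f(i+x,\bg\cup\{i+x\})\bigl(\grad h(i+x,\bg) + \grad q(x)\bigr),
\]
so that $f(i,X_i,\bG_{i^c})$ is exactly the summand in \eqref{eq: variationnel}, since $\bG = \bG_{i^c}\cup\{i+X_i\}$ with $i\in\Lcal_U$. The integrability condition \eqref{condition: fubini_eq_variationnel} gives directly the first hypothesis of Proposition~\ref{prop:sumDLR}(i). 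For the second hypothesis, the right-hand side of \eqref{eq:sumDLR} involves the same integrand, and \eqref{eq:sumDLR} applied to $|f|$ (a nonnegative function, for which the identity holds by monotone convergence without integrability assumptions) transfers the finiteness from the left to the right side.

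It then remains to show that, for every $i$ and every fixed $\bg=\bG_{i^c}$, the inner integral vanishes:
\[
\int_\Qcal f(i,x,\bg)\,\Lambda(i,x,\bg)\,\d x = \frac{1}{Z_i(\bg)}\int_\Qcal \grad_x\Bigl(\check f(i+x,\bg\cup\{i+x\})\,\e^{-h(i+x,\bg)-q(x)}\Bigr)\d x = 0 .
\]
The first equality is the product rule recorded before the statement, using that $\lambda(i,x,\bg)=\e^{-q(x)-h(i+x,\bg)}$ and that $\check f$ is $\lambda^d$-a.e. differentiable and regularizing with respect to $h+q$. Here one must be careful that the second argument of $\check f$ also moves with $x$. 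I would interpret $\grad\check f(i+x,\bG)$ in the statement as the total gradient in the position of the moved point, which is the convention making the identity work, mirroring \citep{baddeley2013variational}. The second equality is the fundamental theorem of calculus, i.e. the divergence theorem applied coordinatewise, on $\Qcal$. It uses absolute continuity of $x\mapsto \check f\e^{-h-q}$ along lines, the boundary condition $\check f(i+x,\cdot)=0$ for $x\in\partial\Qcal$, and vanishing at infinity when $\Qcal$ is unbounded.

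Summing over $i$ and using \eqref{eq:sumDLR} then yields \eqref{eq: variationnel}. The main obstacle is the integration by parts step. We need to justify Fubini (integrating one coordinate at a time over the sections of $\Qcal$), and we need absolute continuity together with the a.e. integrability of the gradient term inside $\Qcal$, which comes from \eqref{condition: fubini_eq_variationnel} via the DLR transfer above. We also need to control the contributions at $\partial\Qcal$ and at infinity. Possible infinite values of $h$ (hard core) are handled by the convention that the gradient is $0$ where the energy is infinite, since there $\e^{-h}=0$ on an open set and the integrand vanishes. I would first treat the case where $\Qcal$ is a box or orthant, so that the coordinatewise integration is clean, and then argue for general $\Qcal$ via the absolute-continuity assumption along almost every line.
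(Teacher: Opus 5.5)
Your argument has the same structure as the paper's proof: the one-site DLR identity (via Proposition~\ref{prop:sumDLR}(i)), integration by parts in $x$ over $\Qcal$, the boundary term killed by $\check f\in\Fcal_\Rcal(H,\mathbb Q,\Lcal_u)$, then summation over sites with \eqref{condition: fubini_eq_variationnel} justifying Fubini. Your transfer of integrability to the right-hand side, by applying the identity to $|f|$, is correct and more explicit than the paper's.

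The problem is your choice for the second argument of $\check f$. You keep $\bG=\bG_{i^c}\cup\{i+x\}$ and read $\grad$ as the total derivative of $x\mapsto \check f(i+x,\bg\cup\{i+x\})$. Under that reading, your first equality no longer follows from the hypotheses. In the paper, differentiability, the regularizing property and the product rule you cite all concern $x\mapsto\check f(x,\bg)\e^{-h(x,\bg)}$ with $\bg$ \emph{frozen}. They say nothing about absolute continuity or differentiability of $x\mapsto\check f(i+x,\bg\cup\{i+x\})\e^{-h(i+x,\bg)-q(x)}$.

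For example, take $\check f(y,\bg)=\psi(y)\,\1\{\#(\bg\cap A)\text{ is even}\}$ with $\psi$ smooth and compactly supported. It is smooth in $y$ for every fixed $\bg$, and it can be arranged to lie in $\Fcal_\Rcal(H,\mathbb Q,\Lcal_u)$. Yet the composite jumps whenever $i+x$ crosses $\partial A$. Your integration by parts then produces surface terms on $\partial A$ that nothing in your hypotheses controls. So the step "product rule plus fundamental theorem of calculus" fails as justified.

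The fix is what the paper's proof actually does: read the second argument of $\check f$ in \eqref{eq: variationnel} and \eqref{condition: fubini_eq_variationnel} as $\bG_{i^c}$. The proof is written with $\check f(i+X_i,\bG_{i^c})$, consistent with Remark~\ref{rem:dlr} and \eqref{eq:evariational}. Then $\bg=\bG_{i^c}$ is fixed throughout the $x$-integration, $\grad$ is the ordinary gradient in the first argument, and membership in $\Fcal_\Rcal(H,\mathbb Q,\Lcal_u)$ supplies exactly the absolute continuity, boundary vanishing and decay your integration by parts needs. With this substitution, the rest of your argument goes through unchanged.

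If you insist on $\bG$ in the second slot, you must place the hypotheses on $\check g(y,\bg):=\check f(y,\bg\cup\{y\})$ instead, i.e. assume $\check g\in\Fcal_\Rcal(H,\mathbb Q,\Lcal_u)$. That is just the $\bG_{i^c}$ version of the statement applied to $\check g$.
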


\begin{remark}\label{rem:dlr}
It is worth mentioning that Proposition~\ref{prop:variational}, and in particular the variational equation~\eqref{eq: variationnel}, can be seen as a specific case of Proposition~\ref{prop:sumDLR} by taking $f(i,x,\bg) = \grad \check f(i+x,\bg) - \check f(i+x,\bg) (\grad h(i+x,\bg) + \grad q(x_i))$.
\end{remark}

\section{Parametric inference of Gibbs perturbed lattices} \label{sec:inference}

\subsection{Notation and parametric model}

In this section, we assume that assumptions \ref{H:stability}-\ref{H:invariant} and \ref{H:range}-\ref{Q:momentexp}[\!1] are satisfied and thus, by Theorem~\ref{thm:existence}, there exists $\bG\sim \mathbb P \in P_\Lcal(H, \mathbb Q)$. Assuming the finite range property \ref{H:range} eases in many ways the statistical methodology and its asymptotic properties. We leave for future research the situation where the Hamiltonian satisfies \ref{H:summable}, as is the case, for example, for the Lennard-Jones potential, as considered by~\citep{coeurjolly2010asymptotic,coeurjolly2017parametric} for continuous Gibbs point processes.  

We recall the notation $\bG = \{i+X_i, i\in \Lcal_U\}$, where $X_i$ are realizations of the mark distribution $\mathbb Q$, assumed to have a density $Q$ with known support $\mathcal Q$ (possibly $\mathbb R^d$). Finally, we also remind that the notation  $\bG_{i^c}$, given by~\eqref{eq:Gic}, stands for the set of perturbed points except the one coming from $i$.  For any $u\in \Dcal$ and any function $f: \Lcal_u \times  \Qcal \times \Ccal \to \mathbb R$, we consider the following properties and definitions: 
\begin{itemize}
\item[\namedlabel{TI}{(TI)}] $f$ is said to be invariant under translation if for any $i\in \Lcal_u$, $x\in \Qcal$ and $\bg\in \Ccal$, $f(i,x,\bg)=f(0,x,\bg-i)$. 
\item[\namedlabel{FR}{(FR)}] $f$ is said to have a finite range $R>0$ if for any $i\in \Lcal_u$, $x\in \Qcal$ and $\bg\in \Ccal$, $f(i,x,\bg)=f(i,x,\bg_{B(i+x,R)})$. 
\end{itemize}

In particular, the local energy function $\tilde h(i,x,\bg)=h(i+x,\bg)$ as well as $\lambda_2(i+x,\bg)$ satisfy \ref{TI}-\ref{FR}.

This section focuses on parametric modelling and estimation: for any $u \in \Dcal, i\in\Lcal_u, x\in \Qcal$, $\bg \in \Ccal$, we let $\lambda(i,x,\bg;\theta)$ (resp. $\Lambda(i,x,\bg;\theta)$) denote a parametric version of~\eqref{eq:lambda}, where $\theta \in \Theta \subseteq \mathbb R^p$ for some $p\ge 1$.  Hence, we consider parametric forms of $q$ and the local energy function $h$. To ease statistical methodologies and asymptotic results, it is standard to focus on exponential family models \citep{jensen1991pseudolikelihood}. So, on the one hand, we assume that $-\log \lambda_1(x)=q(x)$ is of the form $\theta_1^\top S_1(x)+ c(\theta_1)$ where $\theta_1 \in \mathbb R^{p_1}$ for some $p_1\ge 1$ and $c(\theta_1)$ is the log-partition constant. Here are a few examples:
\begin{itemize}
\item Uniform distribution on $\Qcal$ (with $|\Qcal|<\infty$). In this case, there is no parameter to estimate and we take the convention $p_1=0$.
\item Centered Gaussian density on $\Qcal=\mathbb R^d$: $p_1=1$, $\theta_1\in \R^+$ and $S_1(x)= \|x\|^2$ and $c(\theta_1)=-d/2\log(\pi\theta_1)$.
\item Exponential distribution on $\Qcal=\{x\in\mathbb R^d: x_i\ge 0, i=1,\dots,d \}$: $p_1=1$, $\theta_1\in \mathbb R^+$, $S_1(x)=\sum_{i=1}^d x_i$ and $c(\theta_1)=d\log(\theta_1)$.
\end{itemize}
On the other hand, we assume that the local energy function $h=-\log \lambda_2$ can be written as $-\log \lambda_2(i+x,\bg) = h(i+x,\bg)= \theta_2^\top S_2(i+x, \bg)$ where $\theta_2\in \mathbb R^{p_2}$, $p_2\ge 1$. By assumption,  $\lambda_2$, $h$ and $S_2$ satisfy \ref{TI}-\ref{FR}.

As a first example, let us cite the well-known Strauss hard-core model for which $p_2=1$ and 
\begin{equation*}
S_2(y,\bg)=\sum_{y^\prime \in \bg} 
\left\{
\1_{[r,R]} (| y'-y|) + (\infty)\times  \1_{[0,r)}(|y^\prime-y|)
\right\}
\end{equation*}
for any $y\in \R^d$ and known real parameters $r,R$ such that $0\leq r < R$. The exponential family models also include for instance piecewise versions of the Strauss hard-core model, the Geyer saturation model and the Widom-Rowlinson area-interaction  \citep{dereudre2014estimation,illian2008statistical,moller2003statistical}. Note that all these models have finite range. Moreover, they also all satisfy some local stability property which means that for any $y\in \R^d$ and $\bg \in \Ccal$, $\e^{-h(y, \bg)}\le \kappa$ for some $\kappa>0$ (independent of $y,\bg$).

As a summary let $\theta = (\theta_1^\top , \theta_2^\top)^\top\in \mathbb R^p$ and for any $x\in \mathcal Q$ , $S(i,x,\bg) := (S_1(x)^\top ,S_2(i+x,\bg)^\top)^\top $ with $p=p_1+p_2$. All in all, we consider the following exponential parametric model
\begin{align} 
- \log \lambda(i,x,\bg; \theta)  &= -\log \lambda_1(x;\theta) - \log \lambda_2(i+x,\bg; \theta) \hspace*{-1cm}&=& \; h(i+x,\bg)  +q(x) \nonumber\\
&=\theta_1^\top S_1(x) + \theta_2^\top S_2(i+x,\bg)&=&\; \theta^\top S(i,x,\bg). \label{eq:explambda}
\end{align}
Finally, we denote by $\theta^\star$ the true parameter vector to be estimated, i.e. we assume that $\bG \sim \mathbb P = \mathbb P_{\theta^\star}$.

\subsection{Statistical context and methodologies}

In this paper, we consider two different statistical settings, illustrated by Figure~\ref{fig:framework}. Let $W_n$ be a bounded domain of $\mathbb R^d$ which plays the role of the observation domain.
\begin{itemize}
\item {\bf Framework 1}. We observe the shifted lattice $\Lcal_U$ and all perturbed points falling in $W_n$, i.e. we observe a realization of
\begin{equation} \label{eq:fwk1}
\Dcal_{1,n} = \{(i,X_i), \text{ with } i\in \Lcal_U\cap W_n \text{ and } i+X_i\in W_n\}.
\end{equation}
\item  {\bf Framework 2}. We observe the shifted lattice $\Lcal_U$, and only the perturbed points falling in $W_n$, i.e. we observe a realization of
\begin{equation} \label{eq:fwk2}
\Dcal_{2,n} = \{i+X_i \in W_n \text{ for } i\in \Lcal_U  \}.
\end{equation}
\end{itemize}

\begin{figure}[htbp]
\centering
\begin{tabular}{cc}
\includegraphics[width=.4\textwidth]{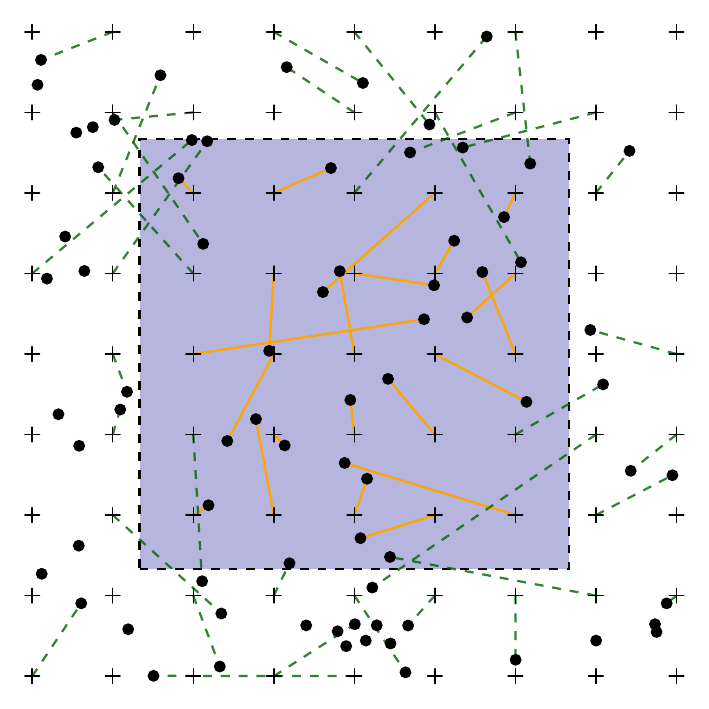}&
\includegraphics[width=.4\textwidth]{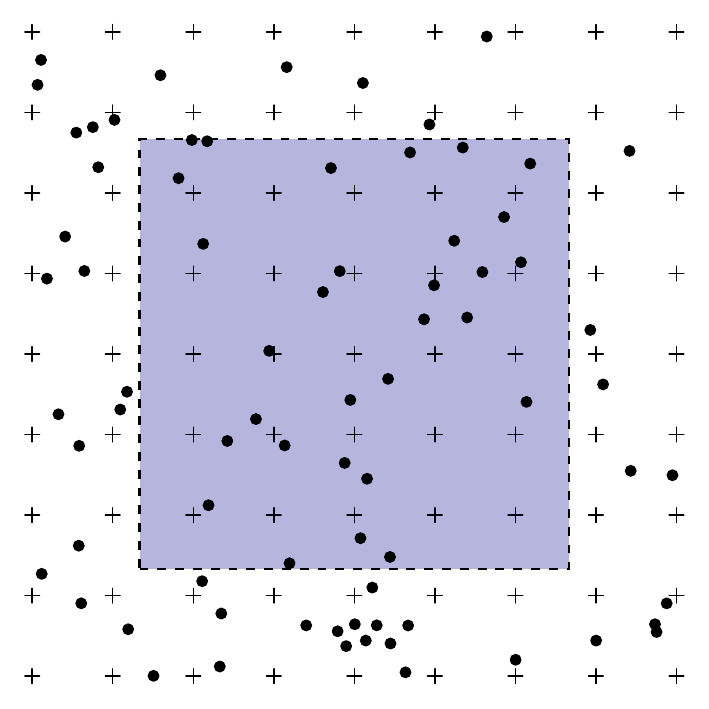}\\
(a) Framework 1 & (b) Framework 2
\end{tabular}
\caption{Illustration of the two statistical frameworks considered in this paper. In (a)-(b) the inside grey rectangle corresponds to $W_n$, crosses to points $i\in \Lcal_U$ and dots to $i+x_i$, for $i\in \Lcal_U$. For (a) Framework~1, we observe crosses ($i$) and dots ($i+x_i$) that fall in $W_n$. For (b) Framework~2, we observe only dots falling in $W_n$. 
%, illustrated by orange segments. Inside $W_n$, there exist lattice points of $\Lcal_U$ for which their perturbations fall outside $W_n$ and perturbed points in $W_n$ (i.e. points) for which their origins on the lattice are outside $W_n$. The situation is somehow simpler to describe for (b) Framework 2, we observe $\Lcal_U$ and only locations of the dots, i.e. $i+x_i$, falling in $W_n$.
}
\label{fig:framework}
\end{figure}

The rest of this section discusses strategies to estimate $\theta^\star$ based on a single observation of either $(i,x_i)_i$ or $(i+x_i)_i$ on a bounded domain $W_n \subset \mathbb R^d$. The domain $W_n$ plays the role of the observation domain and grows to $\mathbb \R^d$ as $n\to \infty$. We also let $W_n\ominus r$ for some $r\ge0$ be the domain $W_n$ eroded by $r$. The standard approach would consist in using the likelihood method. However, due to the unknown normalizing constant depending on the parameter, it is well-known  that the likelihood method is already computationally expensive even for estimating an unmarked  Gibbs model on a lattice \citep{gaetan2010spatial}. Many alternative methods have been proposed in the literature for Gibbs models on the lattice or in the continuous case, which are in particular appropriate for Framework~1~\citep{gaetan2010spatial}. Unlike standard Gibbs lattice models, it is worth noting that this model has a continuous mark distribution and has the particularity that the Hamiltonian depends only on $i+X_i$, $i\in \Lcal_U$ and $X_i\in \Qcal$. The most popular alternative method is the ($\log$-)pseudolikelihood. To account for edge-effects problems, we have to introduce a specific border correction factor, $b_n$, described below. The border-corrected ($\log$)-pseudolikelihood we consider is given by
\begin{align}
    \mathrm{LPL}_{n}(\bG;\theta) &= \sum_{i \in \Lcal_U }\, \log \Lambda_{n}(i,X_i, \bG_{i^c};\theta) ,\label{eq:LPL}
\end{align}
where  for any $i\in \Lcal_U, x\in \Qcal$ and $\bg \in \Ccal$
\begin{equation} \label{eq:Lambdan}
    \Lambda_{n}(i,x,\bg;\theta) = \frac{b_n(i,x) \e^{-\theta^\top S(i,x,\bg)}}{\int_{\Qcal} {b}_n(i,y)\e^{-\theta^\top S(i,y,\bg)} \d y} 
\end{equation}
where the term $b_n$ is called a border correction factor. For technical reasons, mainly explained in Theorem~\ref{thm:ergodic}(ii), we define this border correction factor as
\begin{equation}
\label{eq:bn}    
b_n(i,x) = \1(i\in W_n \ominus(m_n+R), i+x \in W_n\ominus R)
\end{equation}
where $(m_n)_n$ is a sequence of real numbers, specified later in assumption~\ref{M:model}. It is aimed to tend to $\infty$ such that $|W_n\ominus(m_n+R)|\sim |W_n|$ as $n\to \infty$. The interest of the eroded domain $W_n\ominus R$ is also revealed. By the finite range assumption, for any $i \in \Lcal_u$ such that $i+z \in W_n\ominus R$ for $z=X_i,x \in \Qcal$, $\Lambda(i,z, \bG_{i^c};\theta)=\Lambda(i,z, \bG_{i^c} \cap{W_n};\theta)$ can always be evaluated with data from Framework~1. The gradient (with respect to $\theta$), denoted by $\mathrm{LPL}_{n}^{(1)}(\bG;u,\theta)$ of  $\mathrm{LPL}_{n}(\bG;u,\theta)$ for exponential family models is given by
\begin{align}
\mathrm{LPL}_{n}^{(1)}(\bG;\theta)     =& \sum_{i \in \Lcal_U} \Big\{ 
    - b_n(i,X_i) S(i,X_i,\bG_{i^c}) + \nonumber\\ 
& \qquad  \qquad    \int_{\Qcal} b_n(i,x) S(i,x,\bG_{i^c}) \Lambda_n(i,x,\bG_{i^c}; \theta)
    \d x\Big\} \label{eq:gradLPL}.
\end{align}
At that point, it is worth observing that, under appropriate assumptions, not detailed for now, and replacing $\Lambda_n$ by $\Lambda$ in the previous equation,
Proposition~\ref{prop:sumDLR} can be applied and would show that~\eqref{eq:gradLPL} is centered  when $\theta=\theta^\star$. 

Let us now consider the situation of Framework~2. Since we lose the links between $i$ and $i+X_i$, it is straightforwardly seen that some elements of~\eqref{eq:gradLPL} cannot be evaluated: (a) for $i+X_i\in \Gamma$, $q(X_i)$ and consequently  $S(i,X_i,\bG_{i^c})$ are not observed; (b) for any $x\in \Qcal$,  $S(i,x,\bG_{i^c})$ is also incomputable since $\bG_{i^c}$ is not observed. In this situation we did not come up with a general estimating equation similar to~\eqref{eq:gradLPL} that would overcome problems mentioned in~(a)-(b). However, for some particular cases, the variational equation~\eqref{eq: variationnel}, allows us to partially solve the problem. Let ${\check f}=(\check f_1,\dots,\check f_L)$, for some $L \ge p$, be a set of functions satisfying the assumptions of Proposition~\ref{prop:variational}, and consider the empirical version of~\eqref{eq: variationnel} given, for the whole set of test functions ${\check f}$ and for exponential family models, by
\begin{align}
&\mathrm{VARE}_{ n}({\check f};\theta) =\nonumber\\
&\quad \sum_{\substack{ i \in \Lcal_U \\ i+X_i \in W_n\ominus R} } 
\left\{
\grad {\check f}(i+X_i; \bG_{i^c}) -{ \check f}(i+X_i; \bG_{i^c}) 
\left( 
\grad S_1(X_i)^\top \theta_1 + \grad S_2(i,X_i,\bG_{i^c})^\top \theta_2
\right)
\right\}. \label{eq:evariational}
\end{align}
If one considers Framework~2, the main problem of~\eqref{eq: variationnel} lies in $\grad S_1$, which is not observable. However, we can exhibit two particular examples, for which this term either cancels out or is equal to one:
\begin{itemize}
\item[(i)] The distribution $\mathbb Q$ is the uniform distribution on $\Qcal$, then in this case $p_1=\grad S_1=0$ and we propose to take $L=p$ and consider ${\check f}(i+x,\gamma)= \grad S_2(i+x,\gamma) \psi(i+x,\gamma)$ with $\psi:\R^d\to \R$.
\item[(ii)] The density $\lambda_1(x)=\e^{-q(x)}$ corresponds to the exponential distribution on the positive orthant described in Section~\ref{sec:hamiltonian}. For this specific distribution, $\grad S_1 = 1$ and we propose to take $L=p$ and consider ${\check f}(i+x,\gamma)= (1+ \grad S_2(i+x,\gamma) )\psi(i+x,\gamma)$ with $\psi:\R^d\to \R$.
\end{itemize}
The variational equation has also the advantage for exponential models to be linear in $\theta$ which yields an explicit and simple estimator. Remark that for both cases (i)-(ii), one can always find a smooth function $\psi$ such that given $U=u$, ${\check f} \in \Fcal_\Rcal(H, \mathbb{Q}, \Lcal_u)$. Hence,  Proposition~\ref{prop:variational} can be applied which also means that given $U=u$~\eqref{eq: variationnel} is an unbiased estimating equation, i.e. 
$\E_u[ \mathrm{VARE}_{n}({\check f};\theta) ]=0$ when $\theta=\theta^\star$.

From a practical point of view, methodologies summarized by~\eqref{eq:gradLPL} and~\eqref{eq:evariational} appear to be different since the first one (resp. second one) is more appropriate for Framework~1 (resp. Framework~2) and that edge effects are treated differently. Nevertheless, from a theoretical point of view, by Remark~\ref{rem:dlr}, estimates obtained from these equations follow from the same DLR-type equation~\eqref{eq:DLR}. Therefore to encompass both methods from a theoretical point of view, we propose to consider the Takacs-Fiksel \citep{coeurjolly2012takacs,fiksel1984estimation,takacs1986estimator} methodology for which \eqref{eq:gradLPL}  and~\eqref{eq:evariational} are particular cases.

Let $L \ge p$ and for any $u\in \Dcal$, let $ \bff : \Lcal_u\times \Qcal \times \Ccal \times \R^p\to \R^L$. Then, we construct the least squares criterion, called Takacs-Fiksel criterion 
\begin{align}
\mathrm{TF}_{n} (  \bff, \bG ; \theta) &= \sum_{l=1}^L \mathrm{DLR}_{n}(f_l,\bG;\theta)^2 \quad \text{ with } \label{eq:TF}\\
\mathrm{DLR}_{n}(f_l,\bG ; \theta) &=\sum_{ \substack{i \in \Lcal_U} } \Big\{ b_n(i,X_i)f_{l}(i,X_i, \bG_{i^c}; \theta ) \nonumber\\
&\qquad \qquad - \int_\Qcal b_n(i,x) f_{l}(i,x, \bG_{i^c} ; \theta) \Lambda_n(i,x, \bG_{i^c} ; \theta) \d x \Big\} \label{eq:DLRfj}
\end{align}
The Takacs-Fiksel estimator is defined as 
$\hat \theta= \mathrm{argmin}_\theta \mathrm{TF}_{n} (  \bff, \bG ; \theta)$. Again, $\hat \theta$ corresponds to the pseudo-likelihood estimator~\eqref{eq:LPL} when $f(i,x,\gamma;\theta)= S(i,x,\gamma)$ and to the variational estimator~\eqref{eq:evariational} for the two particular cases (i)-(ii) presented above when $f(i,x,\gamma) = \grad \check f(i+x,\bg) - \check f(i+x,\bg) (\grad h(i+x,\bg) + \grad q(x_i)$ and $\check f (i+x,\gamma)= (z+ \grad S_2(i,x,\gamma)) \psi(i+x,\gamma)$ (with $z$ the $d$-dimensional 0 or 1 vector). We intrinsically assume in the following that in the latter case, $\psi$ is chosen such that given $U=u$, ${\check f} \in \Fcal_\Rcal(H, \mathbb{Q}, \Lcal_u)$. 

For any function or stochastic function of $\theta$, $\ell(\theta)$, we denote by $\ell^{(1)}(\theta)$ and $\ell^{(2)}(\theta)$ its gradient and Hessian respectively. Minimizing~\eqref{eq:TF} is equivalent to finding the zeros of its gradient
\begin{align*}
\mathrm{TF}_{n}^{(1)} (  \bff, \bG ; \theta) &= 2 \sum_{l=1}^p  \mathrm{DLR}^{(1)}_{n} (f_l,\bG;\theta) \times \mathrm{DLR}_{n}(f_l,\bG;\theta)
\end{align*}

\subsection{Asymptotic results}

In this section, we investigate the properties of the Takacs-Fiksel estimator minimizing~\eqref{eq:TF}. We let \ref{M:model}-\ref{M:ident} denote the set of assumptions on the model and test functions $f_j$:
\begin{enumerate}[($\mathcal M$1)] 
\item We assume that the perturbed Gibbs lattice model satisfies \ref{H:stability}-\ref{H:invariant}, \ref{H:range}-\ref{Q:momentexp}[\!1], is parameterized by~\eqref{eq:explambda} where $\theta \in \Theta$, an open bounded convex set of $\R^p$, with interior containing $\theta^\star$ the true parameter vector to be estimated, and is observed  in the sequence of increasing cubes $(W_n)_n$ with volume $|W_n|\to \infty$ as $n\to \infty$. We assume that the sequence of real numbers $(m_n)$ defined in~\eqref{eq:bn} is such that $m_n = \beta \log|W_n|$ for $\beta>1/2$ such that $\sum_{n\ge 1}|W_n|^{-\beta}<\infty$.
\label{M:model}
\item Let $L\ge p$. For any  $l=1,\dots,L$, $u\in \Dcal$, $i \in \Lcal_u$, $x\in \R^d$ and $\bg \in \Ccal$, the functions $f_l(i,x,\bg; \cdot)$ and $\Lambda(i,x,\bg; \cdot)$ are twice continuously differentiable on $\Theta$. For $k=0,1,2$, we assume that each component $f_l^{(k)}(i,x,\bg;\theta)$ can be decomposed as $\phi_1(x;\theta) \phi_2(i+x,\bg;\theta)$, $\phi_1,\phi_2$ being different for different components of $f_l^{(k)}$, such that $\phi_2$ satisfies~\ref{TI}-\ref{FR}. \label{M:f}
\item For any $j,k=1,\dots,p$, $l=1,\dots,L$ and $\theta\in \Theta$, \ref{Af} is satisfied for  $f=f_l$, $(f^{(1)}_l)_j$ and $(f^{(2)}_l)_{jk}$, and that
\ref{Bf} is satisfied for $f=f_l,(f^{(1)}_l)_j \Lambda$, $f_l\Lambda^{(1)}_j/\Lambda \times$, $(f^{(2)}_l)_{jk}$, $(f_l^{(1)})_j(f_l^{(1)})_k$, $(f_l^{(1)})_j(\Lambda^{(1)})_j/\Lambda$ and $f_l (\Lambda^{(2)})_{jk}/\Lambda$. The assumptions \ref{Af}-\ref{Bf} correspond to:
\begin{enumerate}[(A){[$f$]}]
\item \label{Af} For any $u\in \R^d$ and $\theta \in \Theta$  %$i_1,\dots,i_{d+1} \in \Lcal_u$ 
\begin{align*}
\sup_{x \in \Qcal}& \; \Eu \left( |f(0,x,\bG_{0^c}; \theta)| \lambda_2(0,x,\bG_{0^c};\theta) \right) <\infty.
%& \; \Eu  
%\int_{\Qcal} \dots \int \prod_{p=1}^{d+1}
%\Big\{
%|f(i_p,x_p,\bG_{i_p^c};\theta)| \times \\
%&\qquad \qquad \Lambda(i_p,x_p, \bG_{\{\cup_{j=1}^{d+1} %i_j\}^c} \cup_{j=1}^{p-1} \{ i_j + x_j\} ;\theta^\star)
%\d x_p
%\Big\}<\infty
\end{align*}
\item \label{Bf} For any $u\in \R^d$ and $\theta\in \Theta$
%$i_1,\dots,i_{d+1} \in \Lcal_u$ 
\begin{align*}
&\sup_{x \in \Qcal}  \; \Eu \left( |f(0,x,\bG_{0^c}; \theta)| \frac{\Lambda(0,x,\bG_{0^c};\theta)}{\lambda_1(x;\theta^\star)} \right) <\infty\\
%& \Eu \int_{\Qcal} \dots \int  \prod_{p=1}^{d+1} \left\{ f(i_p,x_p,\bG_{i_p^c}; \theta) \Lambda(i_p,x_p,\bG_{i_p^c}; \theta) \d x_p \right\} <\infty\\
&\sup_{n\ge 1} \sup_{x,y \in \Qcal} \Eu \left( |f(0,x,\bG_{0^c}; \theta)| \;
\frac{
 \Lambda_n(0,x,\bG_{0^c};\theta) \Lambda(0,y,\bG_{0^c};\theta)}{ \lambda_1(x;\theta^\star) \lambda_1(y;\theta^\star)}
\right)
<\infty.
%\\
%&\sup_{n\ge 1} \Eu \int_{\Qcal} \dots \int_{\Qcal} \Big\{
%\prod_{p=1}^{d+1} |f(i_p,x_p,\bG_{i_p^c}; %\theta)|\Lambda(i_p,x_p,\bG_{i_p^c}; \theta)\times \\ 
%& \qquad \qquad \qquad \Lambda_n(i_p,y_p,\bG_{i_p^c}; \theta)
% \d x_p \d y_p\Big\} <\infty
\end{align*}
\end{enumerate}

%and $(\Lambda_n^{(1)}-\Lambda^{(1)})_j/(\Lambda_n-\Lambda)$.
%For any $\theta\in\Theta$, any $\theta^\prime\in\{\theta\} \cup \{\theta^\star\}$, %$k,k^\prime=0,1,2$ with $0<k+k^\prime\le 2$, and $l=1,\dots,L$
%\begin{align*}
%\E_u \bigg[ \int_{\Qcal}
%|f_j^{(k)}(0,x,\bG_{0^c};\theta)| \; \times \;  |\Lambda^{(k^\prime)}(0,x,\bG_{0^c};\theta)|
%\bigg] &<\infty\\
%\sup_{x \in \Qcal} \E_u\left[ 
%|f_l^{(k)}(0,x,\bG_{0^c};\theta)| \; \times \;  \frac{|\Lambda^{(k^\prime)}(0,x,\bG_{0^c};\theta^\prime)|}{
%\lambda_1(x)}  \right] &<\infty \\
%\int_{\Qcal} \E_u \left[  
%|f_l^{(k)}(0,x,\bG_{0^c};\theta)|^{2d}
%\; \times \;  |\Lambda^{(k^\prime)}(0,x,\bG_{0^c};\theta^\prime)|^{2d}\right]^{1/2d}\d x &<\infty.
%\end{align*} 
\label{M:ergodic}
\item We assume that for any $u \in \Dcal$, the quantity $\mathcal E_u$ defined by~\eqref{eq:finiteEu} is finite.\label{M:var} 
%and that 
%\begin{align*}
%\mathcal E_{\mathbb Q,u} = \sum_{j \in \Lcal_u} \P \left[ X-Y \in B(j,R)\right] <\infty
%|\{B(X-Y^\prime,R) \cup B(X^\prime - Y,R) \cup B(X-Y,R) \}  \cap (W_n\ominus R)_{-\check Y}
%\right] < \infty
%\end{align*}
%where $X,Y$ are independent random variables with distribution $\mathbb Q$.  
\item $\lambda_{\min}(\mathcal I(\theta^\star))>0$ where $\mathcal I(\theta^\star) =\sum_{l=1}^L
\mathcal I_l(\theta^\star)\mathcal I_l(\theta^\star)^\top$ with
\begin{equation} \label{eq:Il}
\mathcal I_l(\theta^\star) = \E_u \left[
\int_{\Qcal} f_l(0,x,\bG_{0^c};\theta^\star)    \Lambda^{(1)}(0,x,\bG_{0^c};\theta^\star) \; \d x
\right]
\end{equation}
and where $\lambda_{\min}(M)$, for a symmetric squared matrix $M$, stands for the smallest eigenvalue of $M$.
\label{M:ident}
\end{enumerate}

Assumption~\ref{M:model} ensures in particular, from Theorem~\ref{thm:existence}, the existence of at least one perturbed lattice stationary Gibbs measure. The assumption on $W_n$ is for instance satisfied for the simple sequence of cubes $W_n=[-n^{1/d}/2,n^{1/d}/2]^d$ with volume $|W_n|=n$ and $\beta>1$. Domains other than cubes could be considered. Finally, a look at the proof of Theorem~\ref{thm:ergodic} in particular shows that the sequence $m_n$ is actually related to the tail of the distribution $\mathbb Q$. More precisely it corresponds to an upper-bound of $-\log\P(|X|>m_n)$ for $X\sim \mathbb Q$. Note that if $\mathbb Q$ is compactly supported, such that $\P(|X|\ge K)=0$, one may replace $m_n$ by $K$ in the methodology. Assumption~\ref{M:f} requires specific form of the test functions $f_j$ and regularity conditions. For exponential models, such an assumption is satisfied for test functions leading to~\eqref{eq:LPL} and~\eqref{eq:evariational}.
Assumption~\ref{M:ergodic} is essentially used in Theorem~\ref{thm:ergodic} to prove ergodic results for empirical versions of DLR-type equations. Assumption~\ref{M:var} is used to control the conditional variance of $\widetilde{\mathrm{DLR}}_n(f_l; \theta^\star)$. Finally, Assumption~\ref{M:ident} corresponds to an identifiability condition.
%for test functions $f_l$ satisfying in particular~\ref{M:f}. 
%The condition $\mathcal E_u<\infty$ \JF{TODO}. 
%The assumption 
% $\mathcal E_{\mathbb Q,u}$ places few restrictions on the distribution $\mathbb Q$. It is obviously satisfied for any distribution with compact support and also satisfied for the Gaussian and exponential distributions described in Section~\ref{sec:hamiltonian}. 

We start with an ergodic theorem for DLR-type equations. Before that, for $A_n$ a random variable, $a_n$ a sequence of positive real numbers and $u \in \Dcal$, we write $A_n=O_{\P_u}(a_n)$ if the following property is satisfied: for any $\varepsilon=\varepsilon(u)$, there exists some finite $M=M(u)>0$ such that for $n$ large enough
\begin{equation}\label{eq:OPu}
    \mathrm P \left( |A_n| \ge M a_n \,\mid U=u \, \right) \le \varepsilon.
\end{equation}
It is worth mentioning that if $\Eu|A_n|^p<\infty$ for some $p>0$, by conditional Markov's inequality, we have that $A_n = O_{\P_u} ( (\Eu|A_n|^p)^{1/p})$.
\begin{theorem}\label{thm:ergodic} 
Assume~\ref{M:model} and let $f$ be a function satisfying \ref{M:f} and \ref{M:ergodic}. We define the random variables 
\begin{align*}
A_{W_n}(f,\bG;\theta) &= \sum_{i\in \Lcal_U} b_n(i,X_i) f(i,X_i,\bG_{i^c};\theta)\\
B_{W_n}(f,\bG;\theta) &= \sum_{i\in \Lcal_U} \int_{\Qcal} b_n(i,x) f(i,x,\bG_{i^c};\theta) \Lambda_n(i,x,\bG_{i^c};\theta) \d x\\
\widetilde B_{W_n}(f,\bG;\theta) &= \sum_{i\in \Lcal_U} \int_{\Qcal} b_n(i,x) f(i,x,\bG_{i^c};\theta) \Lambda(i,x,\bG_{i^c};\theta) \d x\\
\mathrm{DLR}_n(f,\bG;\theta) &= A_{W_n}(f,\bG;\theta) -B_{W_n}(f,\bG;\theta)\\
\widetilde{\mathrm{DLR}}_n(f,\bG;\theta) &= A_{W_n}(f,\bG;\theta) -\widetilde B_{W_n}(f,\bG;\theta).
\end{align*}
Then,  as $n\to \infty$, given $U=u$, we have the following two statements.\\
(i)
\begin{align} 
&|W_n|^{-1} A_{W_n}(f, \bG;\theta) \stackrel{a.s.}{\longrightarrow}
\Eu \int_{\Qcal} f(0,x, \bG_{0^c}; \theta) \Lambda(0,x, \bG_{0^c}; \theta^\star) \d x \label{eq:limA}\\
&|W_n|^{-1} B_{W_n}(f, \bG;\theta) \stackrel{a.s.}{\longrightarrow}
\Eu \int_{\Qcal} f(0,x, \bG_{0^c}; \theta) \Lambda(0,x, \bG_{0^c}; \theta) \d x \label{eq:limB}\\
&|W_n|^{-1} \mathrm{DLR}_n(f, \bG;\theta) \stackrel{a.s.}{\longrightarrow}
\Eu \int_{\Qcal} f(0,x, \bG_{0^c}; \theta) \left\{
\Lambda(0,x, \bG_{0^c}; \theta^\star) - \Lambda(0,x, \bG_{0^c}; \theta)
\right\} \d x\label{eq:limDLR} \\
&|W_n|^{-1} \left( \mathrm{DLR}_n(f , \bG;\theta) -  \widetilde{\mathrm{DLR}}_n(f , \bG;\theta) \right) \stackrel{a.s.}{\longrightarrow} 0. \label{eq:DLRDLRtilde}
\end{align}
(ii) 
\begin{equation}
    \label{DLRDLRtildeprobability}
\mathrm{DLR}_n(f , \bG;\theta) -  \widetilde{\mathrm{DLR}}_n(f , \bG;\theta) = O_{\P_u} \Big(  |W_n|^{1-\beta}\Big)
\end{equation}
where $\beta$ is defined in~\ref{M:model}.
\end{theorem}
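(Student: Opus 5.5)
The plan is to treat $A_{W_n}$, $\widetilde B_{W_n}$ and $B_{W_n}$ separately, using a spatial ergodic theorem for $\Lcal$-stationary fields together with the DLR identity of Proposition~\ref{prop:sumDLR}. We then control the border effects introduced by $b_n$ and by the replacement of $\Lambda$ with $\Lambda_n$.

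First reduce to the ergodic case. By Theorem~\ref{thm:existence}, $\P_u$ is invariant under translations of $\Lcal$, and it is a mixture of ergodic Gibbs measures. It therefore suffices to prove the almost sure statements for each ergodic component; the limits are then understood as conditional expectations with respect to the invariant $\sigma$-field. I would state them as the $\Eu$ expressions, as in the theorem, and implicitly assume ergodicity as the authors do.

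Write $A_{W_n}=\sum_{i\in\Lcal_U} b_n(i,X_i)f(i,X_i,\bG_{i^c};\theta)$. By \ref{M:f}, $f$ factorizes as $\phi_1(X_i)\phi_2(i+X_i,\bG_{i^c})$ with $\phi_2$ satisfying \ref{TI}. Hence $\xi_i:=f(i,X_i,\bG_{i^c};\theta)$ is a stationary random field indexed by $\Lcal$. Consider the untruncated sum $\sum_{i\in\Lcal_U\cap W_n}\xi_i$. The multiparameter ergodic theorem (Nguyen--Zessin / Tempelman for the regular cubes $W_n$) gives $|W_n|^{-1}\sum_{i\in W_n}\xi_i\to |\Dcal|^{-1}\Eu\xi_0$ a.s., provided $\Eu|\xi_0|<\infty$. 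Integrability follows from \ref{Af}: by the one-site DLR equation~\eqref{eq:onesiteDLR},
\[
\Eu|\xi_0|=\Eu\int_\Qcal|f(0,x,\bG_{0^c};\theta)|\Lambda(0,x,\bG_{0^c};\theta^\star)\,\d x ,
\]
and this same identity identifies the limit~\eqref{eq:limA}. Up to the lattice density normalisation, which I would absorb into the definition of $\Eu$ over the fundamental domain, this is the claimed limit.

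The difference between $\sum_{i\in W_n}\xi_i$ and $A_{W_n}$ involves two kinds of sites. The first are sites $i$ in the shell $W_n\setminus (W_n\ominus(m_n+R))$; their number is $O(|W_n|^{(d-1)/d}\log|W_n|)=o(|W_n|)$, and the shell is handled by the same ergodic theorem applied to nested cubes. The second are sites with $i+X_i\notin W_n\ominus R$ although $i\in W_n\ominus(m_n+R)$, which forces $|X_i|>m_n$. For the latter I would bound
\[
\Eu\sum_{i\in W_n}|\xi_i|\1(|X_i|>m_n)\le C|W_n|\sup_x\Eu\bigl(|f|\lambda_2\bigr)\int_{|x|>m_n}\lambda_1(x;\theta^\star)\,\d x .
\]
This uses the DLR identity and \ref{Af}, with local stability replaced by \ref{Af}. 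By \ref{Q:momentexp}[1], $\P(|X|>m_n)\le Ce^{-cm_n}=C|W_n|^{-c\beta}$. Choosing constants so that $c\beta$ plays the role of $\beta$ (I will assume the normalisation $c=1$), the expectation is $O(|W_n|^{1-\beta})$. Since $\sum_n|W_n|^{-\beta}<\infty$, Borel--Cantelli turns this into an a.s. $o(|W_n|)$ bound after dividing by $|W_n|$.

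The term $\widetilde B_{W_n}$ is treated identically. Its summand $\int b_n f\Lambda(\cdot;\theta)\,\d x$ is stationary and integrable by the first part of \ref{Bf}, which gives its limit~\eqref{eq:limB}. The expected main obstacle is $B_{W_n}-\widetilde B_{W_n}$, where $\Lambda_n$ differs from $\Lambda$ because the normalisation integrates only over $\{y: b_n(i,y)=1\}$. For $i\in W_n\ominus(m_n+R)$ one has
\[
\Lambda_n-\Lambda=\Lambda_n\,\frac{\int_\Qcal(1-b_n(i,y))\lambda(i,y,\cdot)\,\d y}{\int_\Qcal\lambda(i,y,\cdot)\,\d y},
\]
and $1-b_n(i,y)$ forces $|y|>m_n$. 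Hence $|B_{W_n}-\widetilde B_{W_n}|\le\sum_i\iint|f|\Lambda_n(x)\Lambda(y)\1(|y|>m_n)\,\d x\,\d y$. Multiplying and dividing by $\lambda_1(x;\theta^\star)\lambda_1(y;\theta^\star)$ and using the second part of \ref{Bf}, the expectation of this bound is at most $C|W_n|\,\P(|Y|>m_n)=O(|W_n|^{1-\beta})$. This yields (ii) by conditional Markov's inequality, since the $A$ terms are identical in $\mathrm{DLR}_n$ and $\widetilde{\mathrm{DLR}}_n$. It also yields~\eqref{eq:DLRDLRtilde} via Borel--Cantelli, using $\beta>1/2$ and the summability assumption in \ref{M:model}. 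Finally,~\eqref{eq:limDLR} follows by subtracting~\eqref{eq:limB} from~\eqref{eq:limA}.
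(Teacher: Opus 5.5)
Your proof is correct. For $\widetilde B_{W_n}$ and $B_{W_n}-\widetilde B_{W_n}$ it is essentially the paper's argument: the ergodic theorem over lattice sites, large moves forced by \eqref{eq:trick1b}, the tail bound $\P(|X|\ge m_n)=O(|W_n|^{-\beta})$, then Markov and Borel--Cantelli. The genuine difference is in how you handle $A_{W_n}$.

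The paper indexes that sum by perturbed location. It applies Theorem~\ref{thm:ergodicNZ} to the marked process $\tilde\bG=\{(i+X_i,X_i)\}$ through $A_{1,W_n}=\sum_i\1(i+X_i\in W_n\ominus R)f$, and identifies the limit with Proposition~\ref{prop:sumDLR}(ii). It then writes $A_{W_n}=A_{1,W_n}-A_{2,W_n}-A_{3,W_n}$ and discards the small-move boundary term $A_{2,W_n}$ using \eqref{eq:trick1a}. You instead index by lattice site, so the summand is an $\Lcal$-stationary function of $\bX$. You then peel off a deterministic shell of sites, plus the interior sites with $|X_i|\ge m_n$.

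This difference is not cosmetic. The identity \eqref{eq:trick1a} is false as stated. Take a site in $W_n\setminus(W_n\ominus(m_n+R))$ lying at distance more than $R$ from $W_n^c$; with a small move it still lands in $W_n\ominus R$. So $A_{2,W_n}$ does not vanish in general. Only the converse implication \eqref{eq:trick1b} holds, and that is exactly the one your decomposition uses.

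Your nested-cube argument closes this gap. Equivalently, you could run the ergodic theorem directly on the cube $W_n\ominus(m_n+R)$, as the paper does for $\widetilde B_{1,W_n}$. Within the paper's indexing, one would instead bound $A_{2,W_n}$ by the sum over $i+X_i\in(W_n\ominus R)\setminus(W_n\ominus(2m_n+R))$ and apply the ergodic theorem twice. The paper's choice buys only that it fits the point-process form of Theorem~\ref{thm:ergodicNZ}. Your route needs the lattice (Tempelman) version, which is equally standard.

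Your identity $\Lambda_n-\Lambda=\Lambda_n(i,x)\int(1-b_n(i,y))\Lambda(i,y)\,\d y$ on $\{b_n(i,x)=1\}$ is also the right one, and it matches the second condition in \ref{Bf}. By contrast, the paper's $Z_n$, which pairs $\Lambda_n(i,y)$ with $1-b_n(i,y)$, vanishes identically as written.

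The loose ends you flag are shared with the paper: the normalisation $c=1$ in \ref{Q:momentexp}[1], the factor $|\Dcal|^{-1}$, and the ergodic reduction. So are two you leave implicit: using \ref{Af}, which is stated with $\lambda_2(\cdot;\theta)$, to control $\Lambda(\cdot;\theta^\star)$; and uniformity in $i$ of the $\Lambda_n$ bound in \ref{Bf}, which is stated only at site $0$.
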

The proof of this result is postponed to~\ref{app:ergodic}. The next result establishes the consistency of the Takacs-Fiksel estimator.

\begin{theorem}\label{thm:convTF} Under the assumptions~\ref{M:model}-\ref{M:ident}, we have the following two statements obtained as $n\to \infty$.\\
(i) Let $v_n= \sup_l\Var_u[ \widetilde{\mathrm{DLR}}_n(f_l,\bG;\theta^\star)]$, where $\Var_u[ \widetilde{\mathrm{DLR}}_n(f_l,\bG;\theta^\star)]$ stands for the conditional variance of $\widetilde{\mathrm{DLR}}_n(f_l,\bG;\theta^\star)$ given $U=u$. Then,  $v_n= O(|W_n|)$.\\

%Assume in addition that  with \JF{??} then 
%\begin{equation}\label{eq:assumptionvn}
%    \JF{??}{|W_n|^{1-2/d} \log(|W_n|)^{2+\varepsilon}} = o(\sqrt{v_n}).
%\end{equation} 
(ii) The sequence of local maximizers $\hat \theta$ satisfies
\begin{equation*}
\hat \theta - \theta^\star = O_{\P_u} \left(v_n^{1/2}|W_n|^{-1}\right) =O_{\P_u} \left(|W_n|^{-1/2} \right).    
\end{equation*}
\end{theorem}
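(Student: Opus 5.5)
Part (i) comes from a variance computation for $\widetilde{\mathrm{DLR}}_n(f_l,\bG;\theta^\star)$ given $U=u$. First, $\widetilde{\mathrm{DLR}}_n$ is centered. Apply Proposition~\ref{prop:sumDLR}(i) to $(i,x,\bg)\mapsto b_n(i,x)f_l(i,x,\bg;\theta^\star)$; the sum has finitely many nonzero terms because $b_n$ restricts $i$ to $W_n\ominus(m_n+R)$, and integrability follows from~\ref{Af}-\ref{Bf}. This gives $\Eu[\widetilde{\mathrm{DLR}}_n]=0$. Next, write $\widetilde{\mathrm{DLR}}_n=\sum_i D_i$ with
\[
D_i=b_n(i,X_i)f_l(i,X_i,\bG_{i^c})-\int_\Qcal b_n(i,x)f_l(i,x,\bG_{i^c})\Lambda(i,x,\bG_{i^c})\d x,
\]
so that $\Var_u=\sum_i\Eu D_i^2+\sum_{i\neq j}\Eu[D_iD_j]$.

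For the diagonal terms, \ref{M:ergodic} applied to $f_l^2$-type products and translation invariance of $\P_u$ on $\Lcal$ (Theorem~\ref{thm:existence}) bound $\Eu D_i^2$ uniformly in $i$. The number of sites $i\in\Lcal_u\cap W_n$ is $O(|W_n|)$, so this part is $O(|W_n|)$. For the off-diagonal terms, use the one-site DLR equation~\eqref{eq:onesiteDLR} at site $i$, conditioning on $\bG_{i^c}$ (equivalently on $X_j,j\neq i$). If $D_j$ does not depend on $X_i$, then $\Eu[D_iD_j]=\Eu[D_j\,\Eu(D_i\mid \bG_{i^c})]=0$. By~\ref{M:f} and the finite range property~\ref{FR}, $D_j$ depends on $X_i$ only when $i+X_i$ falls in $B(j+x,R)$ for some relevant $x$. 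The surviving cross terms are therefore those for which the perturbed point of $i$ interacts with site $j$. For these I would apply the two-site equation, Proposition~\ref{prop:sum2sitesDLR}, to express $\Eu[D_iD_j]$ as an integral against $\Lambda(i,x,\cdot)\Lambda(j,y,\cdot)$. I would then bound the sum over pairs by $|W_n|$ times a quantity like $\sum_{k\in\Lcal_u}\P(X-Y\in B(k,R))$, which is what the quantity $\mathcal E_u$ in~\ref{M:var} is meant to control. Together these give $v_n=O(|W_n|)$.

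\textbf{Main obstacle.} The hardest step is this cross-term control. Because the moves are unbounded, sites far apart on the lattice can interact, so the dependence set is not a fixed neighbourhood. I expect to need the full two-site DLR identity together with the summability encoded in $\mathcal E_u<\infty$. I would first try a decomposition into pairs with $|i-j|$ small, handled by Cauchy--Schwarz and~\ref{M:ergodic}, and pairs with $|i-j|$ large, handled by the probability that $|X_i|$ or $|X_j|$ is large, using~\ref{Q:momentexp}[1].

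For part (ii), I would follow the standard Takacs--Fiksel argument. Set $U_n(\theta)=|W_n|^{-1}\mathrm{DLR}_n(\bff,\bG;\theta)\in\R^L$ and $J(\theta)$ its a.s.\ limit from~\eqref{eq:limDLR}; note $J(\theta^\star)=0$. From Theorem~\ref{thm:ergodic}(i), applied to $f_l$, $f_l^{(1)}$, $f_l^{(2)}$ and to products involving $\Lambda^{(k)}$ (all covered by~\ref{M:ergodic}), the derivative $U_n^{(1)}(\theta)$ converges to a limit equal to $-\mathcal I_l(\theta^\star)^\top$ at $\theta^\star$ (the term involving $f^{(1)}$ cancels there). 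The second derivatives are bounded locally uniformly, using convexity and boundedness of $\Theta$ and an equicontinuity argument.

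Now Taylor-expand the gradient $\mathrm{TF}_n^{(1)}$ around $\theta^\star$. The score at $\theta^\star$ satisfies $U_n(\theta^\star)=|W_n|^{-1}\widetilde{\mathrm{DLR}}_n+|W_n|^{-1}(\mathrm{DLR}_n-\widetilde{\mathrm{DLR}}_n)$. The first term is $O_{\P_u}(v_n^{1/2}|W_n|^{-1})$ by Chebyshev and part (i). The second is $O_{\P_u}(|W_n|^{-\beta})$ by Theorem~\ref{thm:ergodic}(ii), which is negligible since $\beta>1/2$. The Hessian of $|W_n|^{-2}\mathrm{TF}_n$ converges to $2\mathcal I(\theta^\star)$, which is positive definite by~\ref{M:ident}.

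A standard existence-of-local-minimizer lemma then yields, with probability tending to one, a local minimizer $\hat\theta$ with $|\hat\theta-\theta^\star|\le M v_n^{1/2}|W_n|^{-1}$. One can use, e.g., the lemma in Coeurjolly et al.\ on estimating equations, or a direct argument on the sphere of that radius. Combining with $v_n=O(|W_n|)$ gives the rate $O_{\P_u}(|W_n|^{-1/2})$.
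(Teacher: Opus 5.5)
Your plan for part (ii) matches the paper: Taylor expansion of $\mathrm{TF}_n$ on a sphere of radius of order $v_n^{1/2}|W_n|^{-1}$, replacement of $\mathrm{DLR}_n$ by $\widetilde{\mathrm{DLR}}_n$ via Theorem~\ref{thm:ergodic}(ii), and Hessian limit $2\mathcal I(\theta^\star)$ via Lemma~\ref{lem:DLRk}. The centering and the diagonal terms in (i) also match. The gap is the off-diagonal part of (i).

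Under \ref{M:model} the moves may have unbounded support (Gaussian or exponential, as in the paper's examples and simulations). Then your reduction ``$\Eu[D_iD_j]=0$ whenever $D_j$ does not depend on $X_i$'' is vacuous. $D_j$ depends on $X_i$ for essentially every $i$, not only when $i+X_i$ is close to $j+X_j$. The reason is that the compensator $\int_\Qcal b_n(j,x)f_l(j,x,\bG_{j^c})\Lambda(j,x,\bG_{j^c})\,\d x$ and the normalising constant $Z_j(\bG_{j^c})=\int_\Qcal\lambda(j,x,\bG_{j^c})\,\d x$ see every point of $\bG_{j^c}$ within distance $R$ of $j+\Qcal$. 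So there is no deterministic set of pairs outside which the covariances vanish. Your fallback does not repair this, for three reasons. On the event that $|X_i|,|X_j|$ are small, the dependence through the compensators is still present. That event involves $X_i$, so the exact centering $\Eu(D_i\mid\bG_{i^c})=0$ can no longer be exploited. Finally, \ref{Q:momentexp}[1] controls tails of $\mathbb Q$, not of the Gibbs marginals, so a Cauchy--Schwarz plus tail bound would need moments that \ref{M:ergodic} does not provide.

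The missing idea is to make the interaction condition deterministic before counting pairs. The paper applies Proposition~\ref{prop:sum2sitesDLR} to all pairs. This writes the cross terms as an integral of $b_nb_nf_lf_l\Lambda\Lambda$ times $\Delta=\Lambda(i,x,\bG_{(ij)^c})\Lambda(j,y,\bG_{(ij)^c}\cup\{i+x\})/\{\Lambda(i,x,\bG_{i^c})\Lambda(j,y,\bG_{j^c})\}-1$. Using $\lambda_2(a,\bg)\lambda_2(b,\bg\cup\{a\})=\lambda_2(b,\bg)\lambda_2(a,\bg\cup\{b\})$, it writes $\Delta+1=A\,R_1$. Here $A$ is a product of three $\lambda_2$-ratios and $R_1$ is a ratio of normalising integrals. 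It then applies the two-site equation a second time, so that the remaining random moves $X_i,X_j$ become integration variables $x',y'$.

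After stationarity ($i=0$), $A=1$ unless $j\in\mathcal B_R(x,x',y,y')=B(x-y',R)\cup B(x'-y,R)\cup B(x-y,R)$. Likewise, $R_1-1$ is an $R_2$-weighted integral of $A-1$ in extra variables $\tilde x,\tilde y$ against $\lambda_1(\tilde x)\lambda_1(\tilde y)$, so it localises in the same way. Since $\sum_{j}\1(j\in\mathcal B_R)\le 3\sup_z\#(\Lcal_u\cap B(z,R))$ uniformly in the integration variables, and the $\lambda_1$'s integrate to one, each site contributes $O(1)$ once the remaining conditional expectation is bounded uniformly. That uniform bound is exactly the assumption $\mathcal E_u<\infty$ in \ref{M:var}.

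Your guessed quantity $\sum_k\P(X-Y\in B(k,R))$ (with $X,Y\sim\mathbb Q$ independent) does appear in this bound. However, it is trivially bounded by a lattice-point count, so it is not what \ref{M:var} encodes. No splitting in $|i-j|$ or tail estimate on the moves is needed.
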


Let us comment on the rate of convergence obtained in~(ii). As mentioned in the introduction, standard continuous Gibbs point processes are non hyperuniform and results regarding the Takacs-Fiksel estimator for these point processes, see~\citep{coeurjolly2012takacs}, show that the rate of convergence is $|W_n|^{-1/2}$. As a comparison, we obtain an upper-bound for the rate of convergence with similar rate. However, we also show that this rate can be reduced if one proves that $v_n=o(|W_n|)$. If $\bG$ is hyperuniform, it can be deduced (immediate if $|W_n|$ is an increasing ball) that $\Var[ N(W_n) ]=o(|W_n|)$. However, $N(W_n)$ is only the left-hand side of one particular $\mathrm{DLR}$ test function with $f=1$. It is an open and probably complex question, to derive conditions on a test function, say $f_l$, under which the hyperuniformity property of $\bG$ implies that $\Var_u[ \widetilde{\mathrm{DLR}}_n(f_l,\bG;\theta^\star)]= o(|W_n|)$. This question, of great importance from a statistical point of view, is left for future research.

\section{Simulation and numerical study} \label{sec:simulation}

In this section, we present simulation results and evaluate the performance of the estimator based on the Takacs-Fiksel method. The simulated point patterns were generated using a Metropolis-Hastings algorithm with a Gaussian move proposal and a Strauss interaction. The point patterns are generated on  $[-30,30]^2$ and then clipped to the observation domain $[-\ell,\ell]^2$ (for $\ell=8,12,16$). Replicated point patterns are generated from the same seed, for different sets of parameters $\theta_1 := \frac{1}{\sigma^2}$, where $\sigma$ is the standard deviation of the Gaussian move, $\theta_2$ is the strength of the Strauss interaction, and $R$ is the range of the Strauss interaction.

\begin{figure}[H]
\centering
\begin{subfigure}[b]{0.43\textwidth}
\centering
\includegraphics[width=1\textwidth,trim={.8cm .8cm .8cm .8cm},clip]{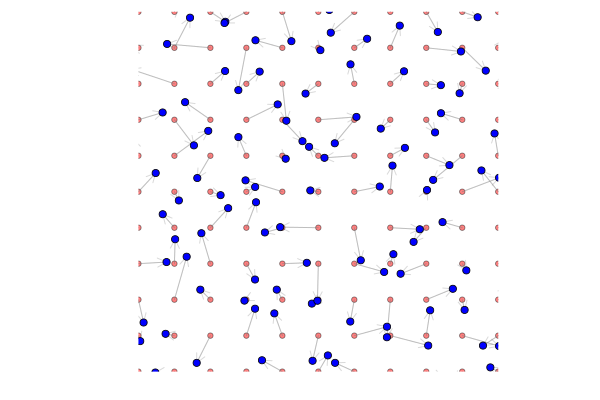}
\caption{$\theta_1 = 4$, $\theta_2 = -\log 0.9 $, $R= 0.5$}
\end{subfigure}
\begin{subfigure}[b]{0.43\textwidth}
\centering
\includegraphics[width=1\textwidth,trim={.8cm .8cm .8cm .8cm},clip]{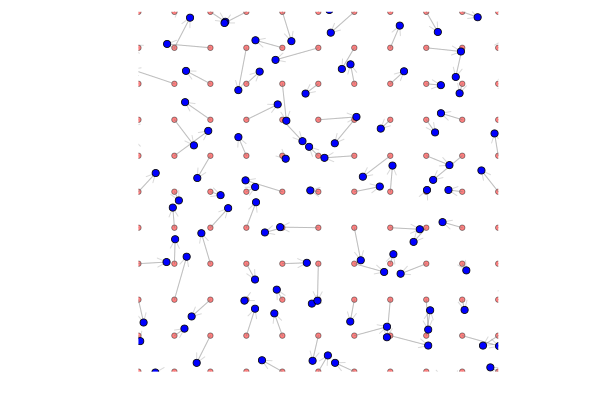}
\caption{$\theta_1 = 4$, $\theta_2 = -\log 0.9$, $R = 1.5$}
\end{subfigure}
    
\begin{subfigure}[b]{0.43\textwidth}
\centering
\includegraphics[width=1\textwidth,trim={.8cm .8cm .8cm .8cm},clip]{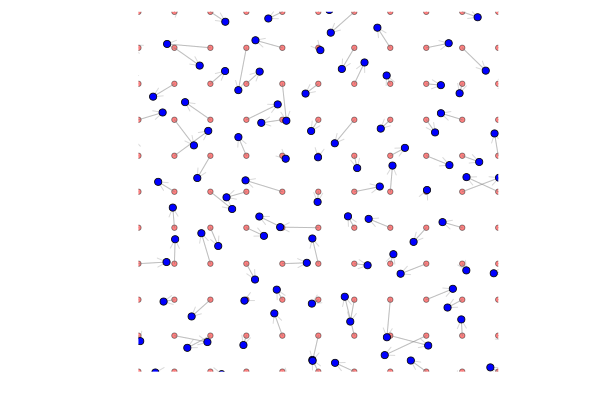}
\caption{$\theta_1 = 4$, $\theta_2 = -\log 0.1$, $R = 0.5$}
\end{subfigure}
\begin{subfigure}[b]{0.43\textwidth}
\centering
\includegraphics[width=1\textwidth,trim={.8cm .8cm .8cm .8cm},clip]{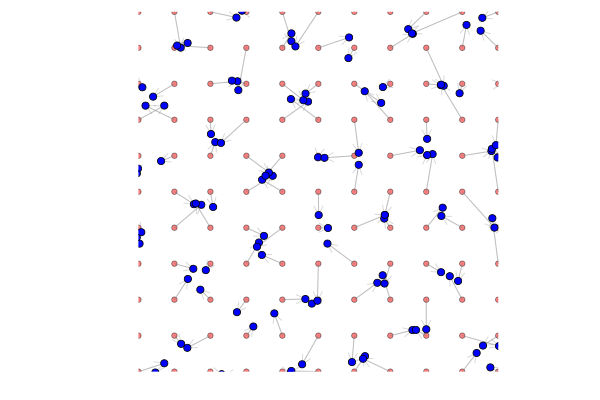}
\caption{$\theta_1 = 4$, $\theta_2 = -\log 0.1$, $R = 1.5$}
\end{subfigure}
    
\begin{subfigure}[b]{0.43\textwidth}
\centering
\includegraphics[width=1\textwidth,trim={.8cm .8cm .8cm .8cm},clip]{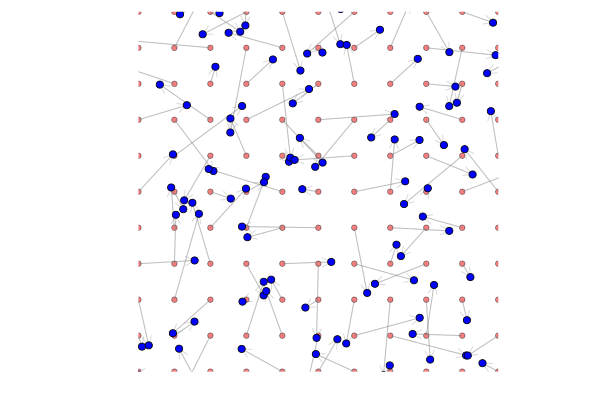}
\caption{$\theta_1 = 1$, $\theta_2 = -\log 0.9$, $R = 0.5$}
\end{subfigure}
\begin{subfigure}[b]{0.43\textwidth}
\centering
\includegraphics[width=1\textwidth,trim={.8cm .8cm .8cm .8cm},clip]{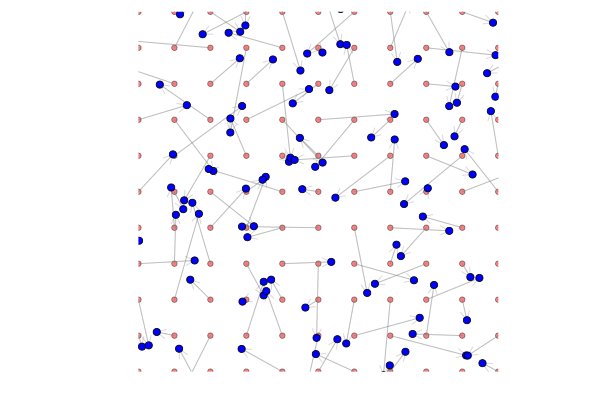}
\caption{$\theta_1 = 1$, $\theta_2 = -\log 0.9$, $R = 1.5$}
\end{subfigure}
    
\begin{subfigure}[b]{0.43\textwidth}
\centering
\includegraphics[width=1\textwidth,trim={.8cm .8cm .8cm .8cm},clip]{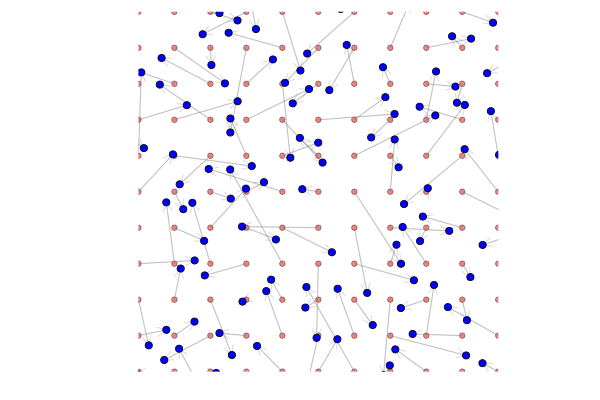}
\caption{$\theta_1 = 1$, $\theta_2 = -\log 0.1$, $R = 0.5$}
\end{subfigure}
\begin{subfigure}[b]{0.43\textwidth}
\centering
\includegraphics[width=1\textwidth,trim={.8cm .8cm .8cm .8cm},clip]{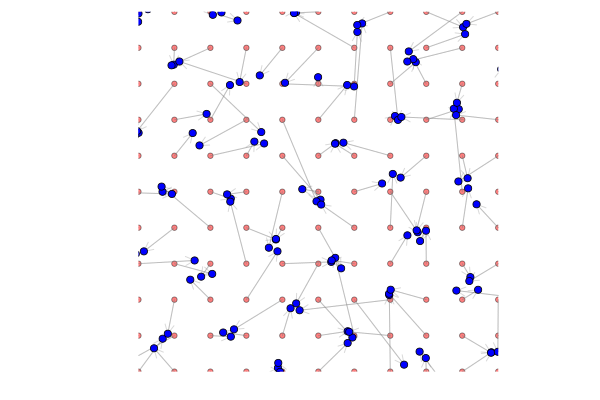}
\caption{$\theta_1 = 1$, $\theta_2 = -\log 0.1$, $R = 1.5$}
\end{subfigure}
\caption{Simulated Gibbs perturbed lattices with Strauss interaction and Gaussian moves.} \label{fig:sim}
\end{figure}

In these simulations, we observe an interesting phenomenon regarding the effect of $\theta_2$ and $R$ on clustering behavior. For regular continuous Gibbs point processes with  Strauss interaction, the larger the value of $\theta_2$, the more repulsive the interaction, and similarly, the larger $R$ is, the more repulsive the point process. For a fixed value of $\theta_1$, we observe that when $\theta_2$ is small (i.e., weak interaction), the influence of $R$ is negligible and the resulting point patterns are nearly indistinguishable. In this regime, the configurations display clusters of points separated by large empty regions, reminiscent of a Poisson point process. In contrast, when $\theta_2$ is large (i.e., strong interaction), the behavior changes markedly depending on the value of $R$. For $R = 0.5$, the configuration exhibits repulsive behavior: the points are more evenly spaced, with only a few pairs located close to one another. Conversely, for $R = 1.5$, a clear clustering phenomenon emerges, with groups of three to four interacting points occurring more frequently. Interestingly, these clusters themselves appear to be regularly spaced, as though they repel each other. According to Proposition \ref{prop:HU}, at least one Gibbs perturbed lattice with Strauss interaction and Gaussian moves is a hyperuniform point process. Figure~\ref{fig:sim} shows that we can obtain hyperuniform processes with different clustering behaviors simply by tuning the model parameters.

\begin{table}[H]
\centering
\begin{tabular}{l|lllllllll}
\hline
\rowcolor{gray!40} $(\theta_1, \theta_2)$  & \multicolumn{9}{c}{$W_n=:[-\ell,\ell]^2$} \\
\rowcolor{gray!40}&\multicolumn{3}{c}{$\ell=8$} & \multicolumn{3}{c}{$\ell=12$} & \multicolumn{3}{c}{$\ell=16$} \\
  \rowcolor{gray!40}& mean & sd & {\small RMSE} & mean & sd & {\small RMSE} & mean & sd & {\small RMSE} \\
%\hline
\hline \hline
%\multicolumn{10}{l}{}\\
\rowcolor{gray!20}\multicolumn{10}{l}{$R=0.5$} \\
\hline
$\theta_1 = 4$ & 3.88 & 0.19 & 0.23 & 4.01 & 0.16 & 0.16 & 4.10 & 0.18 & 0.21 \\
$\theta_2 \approx 0.11$ & 0.11 & 0.19 & 0.19 & 0.03 & 0.09 & 0.12 & 0.12 & 0.11 & 0.11 \\
\hline
$\theta_1 = 1$ & 1.10 & 0.07 & 0.12 & 0.99 & 0.07 & 0.07 & 1.00 & 0.05 & 0.05 \\
$\theta_2  \approx 0.11$ & 0.15 & 0.09 & 0.10 & 0.07 & 0.11 & 0.12 & 0.11 & 0.06 & 0.06 \\
\hline
$\theta_1 = 4$ & 4.19 & 0.41 & 0.45 & 4.04 & 0.20 & 0.20 & 4.05 & 0.19 & 0.19 \\
$\theta_2 \approx0.69$ & 0.84 & 0.28 & 0.32 & 0.69 & 0.17 & 0.17 & 0.71 & 0.08 & 0.08 \\
\hline
$\theta_1 = 1$ & 1.01 & 0.08 & 0.08 & 0.98 & 0.04 & 0.05 & 1.02 & 0.03 & 0.04 \\
$\theta_2 \approx0.69$ & 0.71 & 0.16 & 0.16 & 0.65 & 0.17 & 0.17 & 0.68 & 0.15 & 0.15 \\
\hline
$\theta_1 = 4$ & 3.98 & 0.37 & 0.37 & 3.93 & 0.15 & 0.17 & 4.01 & 0.15 & 0.15 \\
$\theta_2 \approx2.3$ & 2.16 & 0.30 & 0.33 & 2.33 & 0.20 & 0.21 & 2.50 & 0.25 & 0.32 \\
\hline
$\theta_1 = 1$ & 0.99 & 0.07 & 0.07 & 1.01 & 0.05 & 0.05 & 0.98 & 0.03 & 0.04 \\
$\theta_2 \approx2.3$ & 2.30 & 0.52 & 0.52 & 2.26 & 0.23 & 0.23 & 2.24 & 0.18 & 0.19 \\
\hline \hline
%\multicolumn{10}{l}{}\\
\rowcolor{gray!20}\multicolumn{10}{l}{$R=1.5$} \\
\hline
$\theta_1 = 4$ & 4.03 & 0.50 & 0.50 & 4.16 & 0.30 & 0.34 & 4.06 & 0.27 & 0.27 \\
$\theta_2 \approx 0.11$ & 0.11 & 0.06 & 0.07 & 0.11 & 0.07 & 0.07 & 0.13 & 0.05 & 0.06 \\
\hline
$\theta_1 = 1$ & 1.06 & 0.10 & 0.12 & 1.05 & 0.07 & 0.09 & 0.99 & 0.03 & 0.03 \\
$\theta_2 \approx 0.11$ & 0.11 & 0.09 & 0.09 & 0.10 & 0.06 & 0.06 & 0.11 & 0.04 & 0.04 \\
\hline
$\theta_1 = 4$ & 4.16 & 0.27 & 0.31 & 4.07 & 0.31 & 0.32 & 4.00 & 0.18 & 0.18 \\
$\theta_2  \approx0.69$ & 0.77 & 0.09 & 0.11 & 0.76 & 0.05 & 0.08 & 0.69 & 0.05 & 0.05 \\
\hline
$\theta_1 = 1$ & 1.09 & 0.09 & 0.12 & 1.01 & 0.05 & 0.05 & 0.99 & 0.06 & 0.06 \\
$\theta_2 \approx0.69$ & 0.65 & 0.10 & 0.11 & 0.70 & 0.07 & 0.08 & 0.74 & 0.03 & 0.06 \\
\hline
$\theta_1 = 4$ & 92.97 & 49.08 & 101.61 & 6.27 & 0.47 & 2.32 & 5.33 & 0.61 & 1.46 \\
$\theta_2 \approx2.3$ & 101.57 & 55.77 & 113.86 & 3.21 & 0.46 & 1.01 & 3.43 & 0.37 & 1.19 \\
\hline
$\theta_1 = 1$ & 1.21 & 0.12 & 0.24 & 1.02 & 0.07 & 0.07 & 1.10 & 0.02 & 0.10 \\
$\theta_2 \approx2.3$ & 2.60 & 0.36 & 0.46 & 2.38 & 0.22 & 0.24 & 2.43 & 0.14 & 0.19 \\
\hline
\end{tabular}
\caption{Mean, standard deviation and root-mean squared error (RMSE) of estimates based on 200 simulations from Gibbs perturbed lattices with Strauss interaction and Gaussian moves, and observed on $[-\ell,\ell]^2$ for $\ell=8,12,16$.}
	\label{table}
\end{table}

For each parameter configuration, 200 point patterns are simulated, and the parameters are estimated for each realization using the Takacs–Fiksel procedure with the edge corrections described in Section~\ref{sec:inference}. Due to edge correction effects, when $W_n=[-\ell,\ell]^2$ with $\ell=8,12,16$, the estimator uses on average 100, 256, and 576 points, respectively. As shown in Table~\ref{table}, the Takacs-Fiksel estimator performs excellently well. Both estimates of $\theta_1$ and $\theta_2$ exhibit very small biases, with standard deviations decreasing as $\ell$ increases. Consequently, the RMSEs also decrease with $\ell$ and remain low even for the smallest observation domain $[-8,8]^2$. One notable exception occurs for the parameters $\theta_1=4$, $\theta_2=-\log 0.1 \approx 2.3$, and $R=1.5$, corresponding to the clustering regime described earlier with a more constrained move. In this regime, the estimator performs poorly: enlarging the observation window improves performance, but it still fails to reach the estimation accuracy observed in other parameter regimes. The system is highly rigid, with strong repulsion between particles and a strong attraction to the lattice sites. This competition between antagonistic forces may explain the failure of the estimation procedure. Further tests on larger observation windows are necessary, but they are computationally expensive due to the cost of generating point patterns and performing the optimisation. For weaker interactions (i.e., $\theta_2=-\log 0.9 \approx 0.11$), the estimation procedure is consistently reliable, regardless of $\theta_1$ and $R$. In most parameter regimes, the RMSEs decay at a rate consistent with $|W_n|^{-1/2}$.

%%%%%%%%%%%%%%%%%%%%%%%%%%%%%%%%%%%%%%%%%%%%%%%%%%%%%%%%%%%%%%%%%%%%%%%
%%%%%%%%%%%%%%%%%%.  Appendix              %%%%%%%%%%%%%%%%%%%%%%%%%%%%
%%%%%%%%%%%%%%%%%%%%%%%%%%%%%%%%%%%%%%%%%%%%%%%%%%%%%%%%%%%%%%%%%%%%%%%

\begin{appendix}

\section{Auxiliary results for the proof of Theorem~\ref{thm:existence}} \label{app:aux.existence}

In this section, we prove auxiliary results needed for the proof of Theorem \ref{thm:existence}, which is that under assumptions \ref{H:stability}-\ref{H:invariant} and \ref{H:range}-\ref{Q:momentexp}[\!1] or \ref{H:summable}-\ref{Q:bounded}, $P_\Lcal(H, \mathbb{Q}) \neq \emptyset$. Let $\Lambda_n = \mathcal{L} \cap [-n,n]^d$, we define the empirical field $\overline{\P}_n$ as the probability measure $$\overline{\P}_n := \frac{1}{|\Lambda_n|} \sum_{i \in \Lambda_n} \widehat{\P}_n( \cdot - i)$$ where $\widehat{\P}_n = \bigotimes_{k \in \mathcal{L}} \P_{\Lambda_n + 2kn/|k|}$. The empirical field is invariant under translation in $\mathcal{L}$. Our aim is to prove that the empirical field exhibits an accumulation point under the local topology. The local topology is defined as the coarsest topology such that for any cylinder event, also called local event, $A$ the evaluation map $P \mapsto P(A)$ is continuous. The main tightness tool is the specific entropy. Let $P$ be an invariant under translation probability measure on $\Omega$,
\begin{equation}
    I_s(P) = \lim_{\Lambda \to \mathcal{L}} \frac{1}{|\Lambda|} I(P_{|\Lambda}| \mathbb{Q}^{\otimes \Lambda})
\end{equation}
where $I(\cdot|\cdot)$ is the Kullback-Leibler divergence and this limit exists due to a sub-additivity argument. The following proposition gives the properties of the specific entropy and the arguments that give the tightness of probability measures of lattice systems. 

\begin{proposition}[Theorem 15.12 and Proposition 15.14 in \citep{georgii2011book}] \label{prop: propriete_entropie_specifique}${  }$\\
%\begin{itemize}
 %   \item 
 For any probability measure $P$ on $\Omega$ invariant under any translation on $\mathcal{L}$ we define the specific entropy of $P$ as, 
\begin{equation}
    I_s(P) = \sup_{\Lambda \subset \mathcal{L}, |\Lambda| < \infty} \frac{1}{|\Lambda|} I(P_{|\Lambda} | Q^{\otimes \Lambda} ).
\end{equation}
Therefore, the specific entropy does not depend on the sequence of finite sub-lattices $(\Lambda_n)_{n \in \mathbb{N} }$. Moreover, the specific entropy, $I_s$, is affine and the level sets $\{I_s(\cdot) \leq C \}$ are compact and sequentially compact for the local topology.
%\end{itemize}
\end{proposition}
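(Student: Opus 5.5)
The plan follows the route of \citep{georgii2011book}: first \emph{superadditivity} of the relative entropy, which gives existence of the limit and its expression as a supremum; then an \emph{almost-affinity} estimate; and finally \emph{uniform integrability of densities}, which gives compactness.

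\textbf{Step 1: existence of the limit.} For disjoint finite $\Lambda_1,\Lambda_2\subset\Lcal$, the chain rule for Kullback–Leibler divergence with the product reference measure $\mathbb Q^{\otimes(\Lambda_1\cup\Lambda_2)}$ gives
$$I(P_{|\Lambda_1\cup\Lambda_2}\,|\,\mathbb Q^{\otimes \Lambda_1\cup\Lambda_2}) = I(P_{|\Lambda_1}\,|\,\mathbb Q^{\otimes\Lambda_1}) + \E_P\big[I(P(\cdot\mid \bx_{\Lambda_1})_{|\Lambda_2}\,|\,\mathbb Q^{\otimes\Lambda_2})\big].$$
The conditional term dominates $I(P_{|\Lambda_2}|\mathbb Q^{\otimes\Lambda_2})$ by convexity of $I(\cdot|\mu)$ and Jensen. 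So $\Lambda\mapsto I(P_{|\Lambda}|\mathbb Q^{\otimes\Lambda})$ is superadditive, and by $\Lcal$-invariance of $P$ it is translation invariant. A multidimensional Fekete argument on boxes $\Lambda_n$ (tile a large box by translates of a smaller one and discard a boundary layer of negligible relative volume) then shows that the limit along boxes exists and equals the supremum over boxes. Monotonicity of $I$ under projection gives independence of the van Hove sequence.

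I would state the supremum over boxes, which is the form actually used later. For arbitrary finite $\Lambda$ I would average over residues modulo a coarse sublattice $k\Lcal$, with $k>\diam\Lambda$ so that translates are disjoint, and apply superadditivity to these translates.

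\textbf{Step 2: affinity.} Convexity of $I(\cdot|\mu)$ gives $I_s(tP+(1-t)P')\le tI_s(P)+(1-t)I_s(P')$. Conversely, the elementary bound
$$I(tP+(1-t)P'\,|\,\mu)\ \ge\ tI(P|\mu)+(1-t)I(P'|\mu)-\log 2$$
(the log-sum inequality, or the entropy of a two-point mixing variable) holds for every $\Lambda$. After division by $|\Lambda|$ and passage to the limit, the $\log 2$ disappears, so $I_s$ is affine.

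\textbf{Step 3: compactness of level sets.} This is the main obstacle, because the local topology is not metrizable. Fix $C$ and take $P$ with $I_s(P)\le C$. By Step 1, $I(P_{|\Lambda}|\mathbb Q^{\otimes\Lambda})\le C|\Lambda|$ for every finite $\Lambda$. A bound on entropy makes the densities $\d P_{|\Lambda}/\d\mathbb Q^{\otimes\Lambda}$ uniformly integrable (de la Vallée-Poussin with $\phi(t)=t\log t$). By Dunford–Pettis, the set of such marginals is relatively compact and relatively sequentially compact in the $\tau$-topology, that is, for convergence on all bounded measurable functions.

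Lower semicontinuity of $I(\cdot|\mu)$ in the $\tau$-topology, via the Donsker–Varadhan variational formula, makes each set $\{I(P_{|\Lambda}|\cdot)\le C|\Lambda|\}$ closed. The local topology is the projective limit of these $\tau$-topologies over the countably many boxes, and the level set is cut out by these closed constraints together with invariance, which is itself a closed condition. Hence Tychonoff gives compactness, using Kolmogorov consistency to glue the limiting marginals into a measure on $\Omega$.

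For sequential compactness I would use a diagonal extraction over the countable family of boxes. Then I would check that the limiting marginals are consistent, invariant, and still satisfy the entropy bound by lower semicontinuity. This last verification is where I expect most of the care to be needed.
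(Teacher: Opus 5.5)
You have a genuine gap in Step 1: it concerns the supremum over \emph{arbitrary} finite $\Lambda$. Steps 2 and 3, and the box part of Step 1, are fine and follow the usual route of the cited result. That route is superadditivity on boxes, the $\log 2$ mixing bound for affinity, and then uniform integrability, Dunford--Pettis, Donsker--Varadhan lower semicontinuity, Tychonoff/Kolmogorov and a diagonal extraction for the level sets. The level-set argument only needs the box constraints.

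The statement asserts the supremum over all finite $\Lambda$, and your residue-averaging argument does not deliver it. Fix a residue $r$. The translates $\Lambda+r+kj$, $j\in\Lcal$, are disjoint, so superadditivity gives $I(P_{|\Lambda_N}|\mathbb Q^{\otimes\Lambda_N})\ge(1-o(1))\,k^{-d}|\Lambda_N|\,I(P_{|\Lambda}|\mathbb Q^{\otimes\Lambda})$. Averaging over the $k^d$ residues only repeats this inequality. Translates from different classes overlap, and superadditivity cannot combine overlapping sets. What you actually get is $I(P_{|\Lambda}|\mathbb Q^{\otimes\Lambda})\le k^d I_s(P)$, and in general $k^d>|\Lambda|$.

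In fact no argument using only superadditivity, monotonicity and invariance can work. On $\Z$, let $a(\Lambda)$ be $3$ times the maximal number of pairwise disjoint translates of $\{0,1,3\}$ contained in $\Lambda$. This function has all three properties, yet $a(\{0,1,3\})/3=1$. Meanwhile $a(\{1,\dots,N\})/N\to 3/4$, because disjoint translates must be $4$-separated.

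What is missing is \emph{strong} superadditivity (supermodularity) of $F(\Lambda):=I(P_{|\Lambda}|\mathbb Q^{\otimes\Lambda})$, that is, $F(\Lambda\cup\Lambda')+F(\Lambda\cap\Lambda')\ge F(\Lambda)+F(\Lambda')$. It follows from your own chain-rule/Jensen argument. Compare the conditional law of $\bx_{\Lambda\setminus\Lambda'}$ given $\bx_{\Lambda'}$ with its conditional law given $\bx_{\Lambda\cap\Lambda'}$. With a product reference measure, conditioning on more coordinates can only increase the expected conditional relative entropy.

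Then do a Shearer-type count:
\begin{enumerate}
\item Fix an order on a large box $\Lambda_N$.
\item Expand each $F(\Lambda+j)$ with $\Lambda+j\subseteq\Lambda_N$ as a sum of one-site increments along this order.
\item Bound each increment by the corresponding increment of $F(\Lambda_N)$, using supermodularity.
\item Use that these increments are nonnegative (monotonicity) and that every site lies in at most $|\Lambda|$ such translates.
\end{enumerate}
This gives $\#\{j:\Lambda+j\subseteq\Lambda_N\}\,F(\Lambda)\le|\Lambda|\,F(\Lambda_N)$. Letting $N\to\infty$ yields $F(\Lambda)\le|\Lambda|\,I_s(P)$. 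If $I_s(P)=\infty$ the bound is trivial; otherwise all the $F$'s are finite, so the increments are well defined.

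This same bound $F(\Lambda_n)\le|\Lambda_n|\,I_s(P)$ is what you need for the upper half of the independence from the averaging sequence when that sequence is not made of boxes. Projection monotonicity alone does not supply it. Restricting to boxes would suffice for the paper's later use, since a single site is a box, but not for the proposition as stated.
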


Now we prove that the specific entropy for the empirical fields is bounded in order to exhibit a candidate for a Gibbs measure on the lattice.  
\begin{proposition}
The empirical field $\left( \overline{\P}_n \right)_{n \in \N}$ has an accumulation point in the local topology that we denote by $\P_\infty$.
\end{proposition}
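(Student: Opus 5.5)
By Proposition~\ref{prop: propriete_entropie_specifique}, the level sets $\{I_s(\cdot)\le C\}$ are sequentially compact for the local topology. It therefore suffices to show that $\sup_n I_s(\overline{\P}_n) \le C < \infty$; the accumulation point $\P_\infty$ then follows by extracting a subsequence. The bound will come from the finite-volume estimates on $Z_{\Lambda_n}$ together with the affinity of $I_s$ and the product structure of $\widehat{\P}_n$.

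\textbf{Step 1: entropy of one block.} Compute $I(\P_{\Lambda_n}\,|\,\mathbb Q^{\otimes\Lambda_n})$ directly:
\[
I(\P_{\Lambda_n}\,|\,\mathbb Q^{\otimes\Lambda_n}) = -\log Z_{\Lambda_n} - \int \tilde H(\bx_{\Lambda_n})\,\P_{\Lambda_n}(\d\bx_{\Lambda_n}).
\]
Stability~\ref{H:stability} gives $-\tilde H \le A|\Lambda_n|$. Non-degeneracy~\ref{H:nondegenerate} together with~\ref{Q:B0e} gives the lower bound $\log Z_{\Lambda_n} \ge (\log\mathbb Q(B(0,\varepsilon))-B)|\Lambda_n|$ recorded after the definition of $\P_\Lambda$. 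Combining these,
\[
I(\P_{\Lambda_n}\,|\,\mathbb Q^{\otimes\Lambda_n}) \le C|\Lambda_n|, \qquad C := A + B - \log\mathbb Q(B(0,\varepsilon)),
\]
which is finite and independent of $n$.

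\textbf{Step 2: specific entropy of $\widehat{\P}_n$ and $\overline{\P}_n$.} The measure $\widehat{\P}_n$ is a product of independent copies of $\P_{\Lambda_n}$ on disjoint translated blocks tiling $\Lcal$; one should check that the translates $\Lambda_n+2kn/|k|$ really form a tiling, reading the indexing as the block tiling intended. Relative entropy is additive over products, so for $\Delta$ a union of $m$ such blocks we get $I(\widehat{\P}_{n|\Delta}\,|\,\mathbb Q^{\otimes\Delta}) \le mC|\Lambda_n| = C|\Delta|$.

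The specific entropy is only defined for $\Lcal$-invariant measures, and $\widehat{\P}_n$ is only invariant under block translations. I would therefore work directly with $\overline{\P}_n$, which is $\Lcal$-invariant, and evaluate its specific entropy along the sequence of cubes $\Lambda_{N}$ with $N\to\infty$ (legitimate, since the limit does not depend on the sequence). Convexity of $P\mapsto I(P_{|\Lambda}\,|\,\mathbb Q^{\otimes\Lambda})$ gives
\[
I(\overline{\P}_{n|\Lambda_N}\,|\,\mathbb Q^{\otimes\Lambda_N}) \le \frac{1}{|\Lambda_n|}\sum_{i\in\Lambda_n} I\big(\widehat{\P}_n(\cdot-i)_{|\Lambda_N}\,\big|\,\mathbb Q^{\otimes\Lambda_N}\big).
\]
Each shifted cube $\Lambda_N$ is covered by at most $|\Lambda_N|/|\Lambda_n| + O(N^{d-1}/n^{d-1})$ blocks. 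By monotonicity of relative entropy under restriction (a projection), each term is bounded by $C$ times the total volume of those blocks. Dividing by $|\Lambda_N|$ and letting $N\to\infty$ yields $I_s(\overline{\P}_n) \le C$.

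\textbf{Step 3: conclusion.} Since $\sup_n I_s(\overline{\P}_n) \le C$, all $\overline{\P}_n$ lie in the sequentially compact set $\{I_s\le C\}$, so some subsequence converges locally to a limit $\P_\infty$. This limit is $\Lcal$-invariant, because invariance is preserved under local convergence, and it satisfies $I_s(\P_\infty)\le C$.

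The main obstacle is Step 2: correctly passing from the block-product measure, which is not fully invariant, to the averaged measure, and controlling the boundary blocks partially intersected by $\Lambda_N$. These contribute only a surface term $o(|\Lambda_N|)$. I would handle them by monotonicity of relative entropy under projection rather than by any exact additivity.
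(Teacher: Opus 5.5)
Your proposal is correct and follows the paper's route. Like the paper, you bound $I(\P_{\Lambda_n}|\mathbb Q^{\otimes\Lambda_n})$ by $(A+B-\log\mathbb Q(B(0,\varepsilon)))|\Lambda_n|$ via stability and non-degeneracy, transfer this to $I_s(\overline{\P}_n)$, and conclude by compactness of the level sets of $I_s$. The only difference is the transfer step: the paper invokes affinity of $I_s$ on the non-$\Lcal$-invariant measures $\widehat{\P}_n(\cdot-i)$ and computes their specific entropy exactly along block-aligned cubes, whereas you use convexity of finite-volume relative entropy and monotonicity under restriction. Your version gives only the inequality $I_s(\overline{\P}_n)\le A+B-\log\mathbb Q(B(0,\varepsilon))$, which is all that is needed, and it sidesteps that invariance technicality while handling the partially covered boundary blocks explicitly.
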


\begin{proof}
Since the specific entropy is affine, we have that
\begin{equation*}
    I_s(\overline{\P}_n) = \frac{1}{|\Lambda_n|} \sum_{i \in \Lambda_n } I_s(\widehat{\P}_n(\cdot - i)).
\end{equation*}
Furthermore, since the specific entropy does not depend on the sequence of sub-lattices, then for $i \in \Lambda_n$ we choose the sequence $\left(\Lambda_{(2k-1)n}+i\right)_{k \geq 1}$ and therefore
\begin{align*}
    I_s(\widehat{\P}_n( \cdot - i)) &= \lim_{k \to \infty} \frac{1}{(2k-1)^d|\Lambda_{n}|} I(\widehat{\P}_{n |\Lambda_{(2k-1)n}} | \mathbb{Q}^{\otimes \Lambda_{(2k-1)n}}) \\
    &= \lim_{k \to \infty} \frac{1}{(2k-1)^d|\Lambda_{n}|} (2k-1)^d I(\P_{n} | \mathbb{Q}^{\otimes \Lambda_{n}}) \\
    &= \frac{1}{|\Lambda_n|} I(\P_n | \mathbb{Q}^{\otimes \Lambda_{n}}).
\end{align*}
Combining everything we have
\begin{equation*}
    I_s(\overline{\P}_n) = \frac{1}{|\Lambda_n|} I(\P_n | \mathbb{Q}^{\otimes \Lambda_{n}}).
\end{equation*}
Finally, according to \ref{H:stability} and \ref{H:nondegenerate}, for $\bX \sim \P_n$, we have
\begin{align*}
    I(\P_n | \mathbb{Q}^{\otimes \Lambda_{n}}) &= - \log Z_{\Lambda_n} - \E(\tilde{H}(\bX)) \\
    & \leq (A + B - \log \mathbb{Q}(B(0,\varepsilon)) )|\Lambda_n|.
\end{align*}
As a consequence, for all $n \in \N$
\begin{equation}\label{ineq: bounded specific entropy}
    I_s(\overline{\P}_n) \leq A + B - \log \mathbb{Q}(B(0,\varepsilon)),
\end{equation}
and thus, according to Proposition \ref{prop: propriete_entropie_specifique}, $(\overline{\P}_n)_{n\in \N}$ has an accumulation point. 
\end{proof}
A function  $f$ is local if there is $\Delta \subset \Lcal$ finite such that for any $\bx \in \Qcal^\Lcal$, $f(\bx) = f(\bx_\Delta)$. Moreover, $f$ is tame if $\E(e^{f(\tilde \bX_\Delta)})< \infty$ for $\tilde \bX_\Delta \sim \mathbb{Q}^{\otimes \Delta}$.
\begin{lemma}\label{lem: specific entropy bound}
Let $f$ be a local and tame function on $\Delta \subset \mathcal{L}$ finite, for any stationary probability measure $P$ on $\Omega$, $\bX \sim P$ and $\tilde \bX_\Delta \sim \mathbb{Q}^{\otimes \Delta}$ we have 
\begin{equation*}
    \E[f(\bX)] \leq I_s(P) |\Delta| + \log \E\left[\e^{f(\tilde{\bX}_\Delta)}\right].
\end{equation*}
\end{lemma}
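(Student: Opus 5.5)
The bound is an instance of the Donsker--Varadhan (Gibbs) variational inequality for relative entropy, combined with the supremum characterization of the specific entropy in Proposition~\ref{prop: propriete_entropie_specifique}. Since $f$ is local on $\Delta$, we have $\E[f(\bX)]=\E[f(\bX_\Delta)]$, which depends only on the marginal $P_{|\Delta}$. If $I_s(P)=\infty$ there is nothing to prove, so we may assume $I_s(P)<\infty$. Proposition~\ref{prop: propriete_entropie_specifique} then gives
\begin{equation*}
I(P_{|\Delta}\,|\,\mathbb Q^{\otimes\Delta}) \le |\Delta|\, I_s(P) < \infty .
\end{equation*}
In particular $P_{|\Delta}\ll \mathbb Q^{\otimes \Delta}$ with some density $g$.

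The core step is the entropy inequality: for probability measures $\mu\ll\nu$ and measurable $f$ with $\int \e^f\d\nu<\infty$,
\begin{equation*}
\int f\,\d\mu \le I(\mu|\nu) + \log \int \e^{f}\d\nu .
\end{equation*}
We prove it by Jensen's inequality applied to the concave function $\log$ under $\mu$:
\begin{equation*}
\log\int \e^f\d\nu \ge \log \int_{\{g>0\}} \e^{f}g^{-1}\,\d\mu \ge \int (f-\log g)\,\d\mu = \int f\d\mu - I(\mu|\nu).
\end{equation*}
We apply this with $\mu=P_{|\Delta}$ and $\nu=\mathbb Q^{\otimes\Delta}$. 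Tameness of $f$ is exactly the condition $\int \e^f \d\nu<\infty$. Combining with the first display yields $\E[f(\bX)]\le |\Delta| I_s(P)+\log\E[\e^{f(\tilde\bX_\Delta)}]$.

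The main technical point is the integrability needed to split $\int(f-\log g)\d\mu$ into two integrals. The negative part of $f$ could make $\E[f(\bX)]$ equal to $-\infty$, in which case the bound is trivial. The positive part is handled by the elementary inequality $ab\le \e^a + b\log b - b$ (Young's inequality for the conjugate pair $\e^x$, $x\log x$) with $a=f^+$ and $b=g$. This shows $\int f^+ g\,\d\nu \le \int \e^{f^+}\d\nu + \int g\log g\,\d\nu+ C<\infty$, using tameness and finite entropy. With $\E[f^+(\bX)]<\infty$ the Jensen computation is justified, so the argument reduces to this routine check and no serious obstacle remains.
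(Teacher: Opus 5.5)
Your proposal is correct and follows the paper's proof. The paper likewise applies Jensen's inequality (the entropy inequality) to the marginal $P_{|\Delta}$ to get $\E[f(\bX_\Delta)] \le I(P_{|\Delta}|\mathbb{Q}^{\otimes\Delta}) + \log\E[\e^{f(\tilde\bX_\Delta)}]$, then bounds the relative entropy by $|\Delta|\, I_s(P)$ using the supremum characterization in Proposition~\ref{prop: propriete_entropie_specifique}. Your handling of the case $I_s(P)=\infty$ and of the integrability behind the Jensen step fills in details the paper leaves implicit. (The Young's-inequality check is in fact redundant: $\log g$ is $\mu$-integrable when the entropy is finite, and Jensen already controls $(f-\log g)^+$.)
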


\begin{proof}
    By Jensen's inequality we have that 
    \begin{equation}\label{eq: lem ineq specific entropy1}
        \E_P[f(\bX)] = \E[f(\bX_\Delta)] \leq I(P_{|\Delta} | \mathbb{Q}^{\otimes \Delta}) + \log \E\left[\e^{f(\tilde{\bX}_\Delta)}\right].
    \end{equation}
    On the other hand, we know that 
    \begin{equation*}
        I_s(P) = \sup_{\Lambda \subset \mathcal{L}, |\Lambda| < \infty} \frac{1}{|\Lambda|} I(P_{|\Lambda} | \mathbb{Q}^{\otimes \Lambda} ).
    \end{equation*}
    Consequently, we have
    \begin{equation}\label{eq: lem ineq specific entropy2}
        I(P_{|\Delta} | \mathbb{Q}^{\otimes \Delta}) \leq I_s(P) |\Delta|.
    \end{equation}
    By combining \eqref{eq: lem ineq specific entropy1} and \eqref{eq: lem ineq specific entropy2}, we obtain the desired inequality. 
\end{proof}

%\begin{definition}
%    An absolutely summable potential $\Phi$ is a family of potentials such that 
%    \begin{equation}
%        ||\tilde{H}_i|| :=  \sum_{A \subset \mathcal{L}, i\cap A} ||\tilde \Phi_A||_{\infty}
%    \end{equation}
%    is finite. 
%\end{definition}

%\begin{theorem}[Georgii Theorem 4.23]\label{thm:existence georgii}
%Suppose $(E, \Xi)$ is a standard Borel space, $\mathbb{Q}$ a finite measure on $E$ and if $\Phi$ is an absolutely summable potential $\mathcal{G}(\Phi)$ is non-empty and compact. 
%\end{theorem}

\section{Proof of Theorem~\ref{thm:existence}}
\label{app:existence}

\begin{proof} We split the proofs into two parts. The first two show that the measure $\P_\infty$ satisfies DLR equations under the two different sets of assumptions.\\

\noindent{\sc $\hookrightarrow$ \underline{Part A}. $\P_\infty$ satisfies DLR equations under the assumptions~\ref{H:stability}-\ref{H:invariant} and \ref{H:range}-\ref{Q:momentexp}[\!1].} Let $f$ be a bounded local function, $\Delta \subset  \mathcal{L}$ finite and $\epsilon >0$. We split the proof into 7 steps.

\paragraph{Step 1}
Since $f$ is a local bounded function by convergence in the local topology, we have for $n$ large enough
\begin{equation}\label{ineq: dlr step1}
    \left| \int f \d\P_\infty - \int f \d\overline{\P}_n \right| \leq \epsilon \norm{f}_\infty.
\end{equation}

\paragraph{Step 2} We define $B_k = [-k,k]^d \oplus \Delta$, $\Delta_k = \Lambda_k \cap \mathcal{L}$ and we consider the following events
\begin{align*}
    I_{\Delta, l} &:= \{ \exists i \in \Delta, i+x_i \in B_l^c \}, \\
    T_{\Delta, k,l,R} &= \{ \exists j \in \Delta_k^c, j+x_j \in B_{l+R}\}, \\
    E_{k,l,R,\Delta} &=  I_{\Delta, l} \cup T_{\Delta, k,l,R}.
\end{align*}
We have that
\begin{equation}\label{ineq: dlr step2}
    \sup_{n \in \N} \left| \int f \d\overline{\P}_n - \int f \1_{E_{k,l,R,\Delta}^c} \d\overline{\P}_n \right| \leq \norm{f}_\infty \epsilon.
\end{equation}
Using the fact that $(X_i)_{i \in \mathcal{L}}$ are identically distributed under $\overline{\P}_n$ for any $n\ge 1$ and Markov inequality
\begin{align*}
    \overline{\P}_n (E_{k,l,R,\Delta} ) &\leq \sum_{i\in \Delta} \overline{\P}_n(i+X_i \in B_l^c ) + \sum_{j \in \Delta_k} \overline{\P}_n (j+X_j \in B_{l+R}) \\
    & \leq |\Delta| \overline{\P}_n (|X_0| \geq l) + |\Lambda_{l+R}| \overline{\P}_n(|X_0| \geq k-l-R) \\
    & \leq \left(\frac{|\Delta|}{l} + \frac{|B_{l+R}|}{k-l-R} \right) \E(|X_0|).
\end{align*}
Furthermore, using Lemma \ref{lem: specific entropy bound} and assumption \ref{Q:momentexp}[\!1] and \eqref{ineq: bounded specific entropy}, there exists $c>0$ such that for any $n \in \N$
\begin{equation*}
    \E[c |X_0|] \leq I_s(\overline{\P}_n) + \log \E[\e^{c|\tilde X |}] < \infty
\end{equation*}
for $\tilde X \sim \mathbb Q$. Consequently, there exists $C>0$ such that 
\begin{equation}\label{ineq: proba event interaction site distant}
    \overline{\P}_n (E_{k,l,R,\Delta} ) \leq C \left(\frac{|\Delta|}{l} + \frac{|B_{l+R}|}{k-l-R} \right) .
\end{equation}
As a consequence for $k$ and $l$ large enough we obtain \eqref{ineq: dlr step2}.

\paragraph{Step 3} For $n$ large enough we have,
\begin{equation}\label{ineq: dlr step3}
    \left| \int f \1_{E_{k,l,R, \Delta}^c} \d\overline{\P}_n - \int \tilde{f}_{\Delta,l} \1_{T_{k,l,R,\Delta}^c} \d\overline{\P}_n \right| \leq 2 \epsilon \norm{f}_\infty ,
\end{equation}
where
\begin{equation*}
    \tilde{f}_{\Delta,l}(\bx_{\Delta^c}) = \frac{1}{Z_\Delta(\bx_{\Delta^c})}\int f(\bx_\Delta' \cup \bx_{\Delta^c}) \1_{I_{l,\Delta}^c}(\bx_\Delta') \e^{-\tilde H_\Delta(\bx_\Delta' \cup \bx_{\Delta^c})} \mathbb{Q}^{\otimes \Delta}.
\end{equation*}
We consider $n$ large enough such that $\Delta \subset \Lambda_n$ and we define $\Lambda_n^* := \{ i\in \Lambda_n, \Delta-i \subset \Lambda_n \} $. By definition, we have
\begin{equation*}
    \int f \1_{E_{k,l,R,\Delta}^c} \d\overline{\P}_n = \frac{1}{|\Lambda_n|} \sum_{i\in\Lambda_n} \int f \1_{E_{k,l,R,\Delta}^c}(\cdot + i) \d\widehat{\P}_n,
\end{equation*}
and as a consequence we have 
\begin{align*}
    \left| \int f \1_{E_{k,l,R,\Delta}^c} \d\overline{\P}_n - \frac{1}{|\Lambda_n|} \sum_{i\in\Lambda_n^*} \int f \1_{E_{k,l,R,\Delta}^c}(\cdot + i) \d\widehat{\P}_n\right| \leq \frac{|\Lambda_n \setminus \Lambda_n^*|}{|\Lambda_n|} \norm{f}_\infty.
\end{align*}
Let us consider
\begin{align*}
    \tilde{f}_{\Delta, E^c} (\bx) = \frac{1}{Z_\Delta(\bx_{\Delta^c})} \int f \1_{E_{k,l,R,\Delta}^c}(\bx_{\Delta^c} \cup \bx_{\Delta}') \e^{-H_\Delta(\bx_{\Delta^c} \cup \bx_{\Delta}')} \mathbb{Q}^{\otimes \Delta}
\end{align*}
For $i \in \Lambda_n^*$, we have
\begin{align*}
    \int &\tilde{f}_{\Delta, E^c}( \cdot + i) \d\widehat{\P}_n = \! \iint f \1_{E_{k,l,R,\Delta-i}^c} \left( \bx_{(\Delta-i)^c} \cup \bx_{\Delta-i}' \right) \frac{\e^{-\tilde H_{\Delta-i}\left( \bx_{(\Delta-i)^c} \cup \bx_{\Delta-i}'  \right)}}{Z_{\Delta-i}\left( \bx_{(\Delta-i)^c}\right)}  \mathbb{Q}^{\otimes (\Delta -i)} \d\widehat{\P}_n.
\end{align*}
%\begin{align*}
%    \int &\tilde{f}_{\Delta, E^c}( \cdot + i) \d\widehat{\P}_n \\
%    =& \int \int f \1_{E_{k,l,R,\Delta}^c}(\tau_i(x)_{\Delta^c} \cup x_{\Delta}') \frac{\e^{-\tilde H_\Delta(\tau_i(x)_{\Delta^c} \cup x_{\Delta}')}}{Z_\Delta(\tau_i(x)_{\Delta^c})}  \mathbb{Q}^{\otimes \Delta} \d\widehat{\P}_n \\
%    =& \int \int f \1_{E_{k,l,R,\Delta}^c}\left(\tau_i \left( x_{\tau_{-i}(\Delta)^c} \cup x_{\tau_{-i}(\Delta)}' \right) \right) \frac{\e^{-\tilde H_\Delta\left(\tau_i \left( x_{\tau_{-i}(\Delta)^c} \cup x_{\tau_{-i}(\Delta)}' \right) \right)}}{Z_\Delta\left(\tau_i\left( x_{\tau_{-i}(\Delta)^c} \right)\right)}  \mathbb{Q}^{\otimes \Delta} \d\widehat{\P}_n \\
%    =& \int \int f \1_{E_{k,l,R,\Delta}^c}\left(\tau_i \left( x_{\tau_{-i}(\Delta)^c} \cup x_{\tau_{-i}(\Delta)}' \right) \right) \frac{\e^{-\tilde H_{\tau_{-i}(\Delta)}\left( x_{\tau_{-i}(\Delta)^c} \cup x_{\tau_{-i}(\Delta)}'  \right)}}{Z_{\tau_{-i}(\Delta)}\left( x_{\tau_{-i}(\Delta)^c}\right)}  \mathbb{Q}^{\otimes \tau_{-i}(\Delta)} \d\widehat{\P}_n.
%\end{align*}
Since $\Delta-i \subset \Lambda_n$ and that $\widehat{\P}_n$ satisfies the DLR equations for $\Delta+i$, we have 
\begin{align*}
    \int \tilde{f}_{\Delta, E^c}(\cdot +i) \d\widehat{\P}_n = \int f \1_{E_{k,l,R,\Delta}^c}(\cdot +i) \d\widehat{\P}_n
\end{align*}
Similarly we have
\begin{equation*}
    \left| \int \tilde{f}_{\Delta, E^c} \d\overline{\P}_n - \frac{1}{|\Lambda_n|} \sum_{i\in\Lambda_n^*} \int \tilde{f}_{\Delta, E^c}(\cdot +i) \d\widehat{\P}_n\right| \leq \frac{|\Lambda_n \setminus \Lambda_n^*|}{|\Lambda_n|} \norm{f}_\infty.
\end{equation*}
Finally, we have that $\1_{E_{k,l,R,\Delta}^c} = \1_{I_{l,\Delta}^c} \1_{T_{k,l,R,\Delta}^c}$ and since $T_{k,l,R,\Delta}^c$ is an event that depends only on the configuration outside $\Delta$ we have
\begin{equation*}
    \tilde{f}_{\Delta, E^c} = \tilde{f}_{\Delta, l} \1_{T_{k,l,R,\Delta}^c}.
\end{equation*}
Therefore, by combining everything above we obtain \eqref{ineq: dlr step3} for $n$ large enough. 

\paragraph{Step 4}
In this step we consider the following 
\begin{equation*}
    \widehat{f}_{\Delta, l}(\mathbf x) = \frac{1}{Z_{\Delta,l}(\bx_{\Delta^c})} \int f(\bx_\Delta' \cup \bx_{\Delta^c}) \1_{I_{l,\Delta}^c}(\bx_\Delta') \e^{-\tilde H_\Delta(\bx_\Delta' \cup \bx_{\Delta^c})} \mathbb{Q}^{\otimes \Delta}
\end{equation*}
where
\begin{equation*}
    Z_{\Delta,l}(\bx_{\Delta^c}) = \int \1_{I_{l,\Delta}^c}(\bx_\Delta') \e^{-\tilde H_\Delta(\bx_\Delta' \cup \bx_{\Delta^c})} \mathbb{Q}^{\otimes \Delta}. 
\end{equation*}
For $k$, $l$ and $n$ large enough we have,
\begin{equation}\label{ineq: dlr step4}
    \left| \int \tilde{f}_{\Delta,l} ( \bx_{\Delta^c}) \1_{T_{k,l,R,\Delta}^c} (\bx_{\Delta_k^c}) \d\overline{\P}_n - \int \widehat{f}_{\Delta,l} ( \bx_{\Delta^c}) \1_{T_{k,l,R,\Delta}^c} (\bx_{\Delta_k^c}) \d\overline{\P}_n\right| \leq 3\epsilon \norm{f}_\infty.
\end{equation}
Since the difference between the normalisation constants is
\begin{equation*}
    Z_{\Delta}(\bx_{\Delta^c}) - Z_{\Delta,l}(\bx_{\Delta^c}) =  \int \1_{I_{l,\Delta}}(\bx_\Delta') \e^{-\tilde H_\Delta(\bx_\Delta' \cup \bx_{\Delta^c})} \mathbb{Q}^{\otimes \Delta},
\end{equation*}
we have the following upper bound
\begin{align*}
\Big| \int \tilde{f}_{\Delta,l}  \1_{T_{k,l,R,\Delta}^c}  \d\overline{\P}_n - &\int \widehat{f}_{\Delta,l} \1_{T_{k,l,R,\Delta}^c}  \d\overline{\P}_n\Big|  \leq \norm{f}_\infty \times \\
&\hspace*{2cm}\int \frac{1}{Z_\Delta(\bx_{\Delta^c})} \int  \1_{I_{l,\Delta}}(\bx_\Delta') \e^{-\tilde H_\Delta(\bx_\Delta' \cup \bx_{\Delta^c})} \mathbb{Q}^{\otimes \Delta} \d\overline{\P}_n
\end{align*}
Furthermore, for $n$ large enough we can apply \eqref{ineq: dlr step3} for $f \equiv 1$, and obtain that
\begin{equation*}
    \int \frac{1}{Z_\Delta(\bx_{\Delta^c})} \int  \1_{I_{l,\Delta}}(\bx_\Delta') \e^{-\tilde H_\Delta(\bx_\Delta' \cup \bx_{\Delta^c})} \mathbb{Q}^{\otimes \Delta} \d\overline{\P}_n \leq 2 \epsilon + \overline{\P}_n (E_{k,l,R,\Delta}),
\end{equation*}
and by \eqref{ineq: proba event interaction site distant} we know that for $k, l$ large enough we have,
\begin{equation*}
    \sup_{n \in \N} \overline{\P}_n( E_{k,l,R, \Delta}) \leq \epsilon,
\end{equation*}
and therefore we have \eqref{ineq: dlr step4}.

\paragraph{Step 5}
For $k$, $l$ and $n$ large enough we have
\begin{equation}\label{ineq: dlr step5}
    \left| \int \widehat{f}_{\Delta,l} ( \bx_{\Delta^c}) \1_{T_{k,l,R,\Delta}^c} (\bx_{\Delta_k^c}) \d\overline{\P}_n - \int \widehat{f}_{\Delta,l} ( \bx_{\Delta^c}) \1_{T_{k,l,R,\Delta}^c} (\bx_{\Delta_k^c}) \d\P_\infty \right| \leq 3 \epsilon \norm{f}_\infty. 
\end{equation}
We can remark that under the event $T_{k,l,R,\Delta}^c$, $\widehat{f}_{\Delta,l}$ depends only on the configurations in $\Delta_k \setminus \Delta$ since the points originating from $\Delta_k^c$ are too far away to interact with the points from $\Delta$. Therefore, we have
\begin{align*}
    \Big| \int \widehat{f}_{\Delta,l}\1_{T_{k,l,R,\Delta}^c} \d \overline{\P}_n &- \int \widehat{f}_{\Delta,l} \1_{T_{k,l,R,\Delta}^c}  \d P_\infty \Big| \\
    \leq&  \left| \int \widehat{f}_{\Delta,l}\1_{T_{k,l,R,\Delta}^c} (\bx_{\Delta_k \setminus \Delta}) \d \overline{\P}_n - \int \widehat{f}_{\Delta,l}(\bx_{\Delta_k \setminus \Delta}) \d \overline{\P}_n \right| \\
    &+ \left| \int \widehat{f}_{\Delta,l}(\bx_{\Delta_k \setminus \Delta}) \d \overline{\P}_n - \int \widehat{f}_{\Delta,l}(\bx_{\Delta_k \setminus \Delta}) \d \P_\infty \right| \\
    &+ \left| \int \widehat{f}_{\Delta,l}(\bx_{\Delta_k \setminus \Delta}) \d \P_\infty - \int \widehat{f}_{\Delta,l} \1_{T_{k,l,R,\Delta}^c}  \d \P_\infty \right| \\
    \leq & \norm{f}_\infty \left( \overline{\P}_n( T_{k,l,R,\Delta}) + \P_\infty( T_{k,l,R,\Delta}) \right) \\
    & + \left| \int \widehat{f}_{\Delta,l}(\bx_{\Delta_k \setminus \Delta}) \d \overline{\P}_n - \int \widehat{f}_{\Delta,l}(\bx_{\Delta_k \setminus \Delta}) \d \P_\infty \right|.
\end{align*}
Using similar arguments as the ones used in Step 2, there exists a constant $C> 0$ such that
\begin{align*}
    \sup_{n\in\N} \overline{\P}_n (T_{k,l,R,\Delta}) \leq  \frac{C|B_{l+R}|}{k-l-R}.
\end{align*}
As a consequence for $k$ large enough we have 
\begin{equation*}
    \overline{\P}_n( T_{k,l,R,\Delta}) + \P_\infty( T_{k,l,R,\Delta}) \leq 2\epsilon.
\end{equation*}
Once $k$ is fixed, by convergence in the local topology we have for $n$ large enough
\begin{equation*}
    \left| \int \widehat{f}_{\Delta,l}(\bx_{\Delta_k \setminus \Delta}) \d \overline{\P}_n - \int \widehat{f}_{\Delta,l}(\bx_{\Delta_k \setminus \Delta}) \d \P_\infty \right| \leq \epsilon \norm{f}_\infty. 
\end{equation*}

\paragraph{Step 6}
For $l$ large enough we have
\begin{equation} \label{ineq: dlr step6}
    \left| \int \widehat{f}_{\Delta,l} \1_{T_{k,l,R,\Delta}^c} \d \P_\infty - \int \tilde{f}_{\Delta,l} \d \P_\infty \right| \leq 2\epsilon \norm{f}_\infty. 
\end{equation}
Following similar computations as in Step 4 we show that
\begin{equation*}
    \left| \int \widehat{f}_{\Delta,l} \1_{T_{k,l,R,\Delta}^c} \d \P_\infty - \int \tilde{f}_{\Delta,l} \1_{T_{k,l,R,\Delta}^c} \d \P_\infty \right| \leq \norm{f}_\infty \int \int \frac{\e^{-\tilde H_\Delta(\bx_\Delta' \cup \bx_{\Delta^c})}}{Z_\Delta(\bx_{\Delta^c})} \1_{I_{l,\Delta}} \mathbb{Q}^{\otimes \Delta} \d \P_\infty.
\end{equation*}
By dominated convergence theorem, we have for $l$ large enough
\begin{equation*}
    \int \int \frac{\e^{-\tilde H_\Delta(\bx_\Delta' \cup \bx_{\Delta^c})}}{Z_\Delta(\bx_{\Delta^c})} \1_{I_{l,\Delta}} \mathbb{Q}^{\otimes \Delta} \d \P_\infty \leq \epsilon.
\end{equation*}
Furthermore, for $k$  large enough we have
\begin{equation*}
    \left| \int \tilde{f}_{\Delta,l} \1_{T_{k,l,R,\Delta}^c} \d \P_\infty - \int \tilde{f}_{\Delta,l} \d \P_\infty \right|  \leq \norm{f}_\infty \P_\infty(T_{k,l,R,\Delta}) \leq \epsilon \norm{f}_\infty,
\end{equation*}
and this finishes the proof \eqref{ineq: dlr step6}.

\paragraph{Step 7}
Finally by dominated convergence theorem, for $l$ large enough we have
\begin{equation}\label{ineq: dlr step7}
    \left| \int \tilde{f}_{\Delta,l} \d \P_\infty - \int \tilde{f}_\Delta \d \P_\infty \right| \leq \epsilon \norm{f}_\infty.
\end{equation}
where
\begin{equation*}
    \tilde{f}_\Delta(\bx) = \frac{1}{Z_\Delta(\bx_{\Delta^c})}\int f(\bx_\Delta' \cup \bx_{\Delta^c}) \e^{-\tilde{H}_\Delta(\bx_\Delta' \cup \bx_{\Delta^c})} \mathbb{Q}^{\otimes \Delta}.
\end{equation*}

\paragraph{Conclusion} By combining \eqref{ineq: dlr step1}-\eqref{ineq: dlr step7}, we have
%, \eqref{ineq: dlr step2}, \eqref{ineq: dlr step3}, \eqref{ineq: dlr step4}, \eqref{ineq: dlr step5} and \eqref{ineq: dlr step6}, \eqref{ineq: dlr step7}, we have 
\begin{equation*}
    \left| \int f \d \P_\infty - \int \tilde{f}_\Delta \d \P_\infty \right| \leq 13 \epsilon \norm{f}_\infty.
\end{equation*}
This inequality, being true for all $\epsilon>0$, shows that $P_\infty$ satisfies the DLR equations for bounded local functions. Then, these DLR equations can be extended to bounded measurable functions by a monotone class argument, which ends this first part.\\

\noindent{\sc $\hookrightarrow$ \underline{Part B}. $\P_\infty$ satisfies DLR equations  under the assumptions~\ref{H:stability}-\ref{H:invariant} and \ref{H:summable}-\ref{Q:bounded}.} We consider the same notation and 7 steps as for the first part and detail only those that differ.

\paragraph{Step 2.bis}
For $n$ large enough, we have
\begin{equation}\label{ineq: dlr step2bis}
    \left| \int f \d \overline{\P}_n - \int \tilde{f}_\Delta \d \overline{\P}_n \right| \leq 2 \epsilon \norm{f}_\infty.
\end{equation}
This step follows from the same arguments as in Step 2 where we replace $f \1_{E_{k,l,R,\Delta}^c}$ with $f$.

\paragraph{Step 3.bis}
For $k>0$ large enough, we have
\begin{equation}\label{ineq: dlr step3bis}
    \sup_{n \in \N} \left| \int \tilde{f}_\Delta (\bx_{\Delta^c}) \d \overline{\P}_n -  \int \tilde{f}_\Delta (\bx_{\Delta_k \setminus \Delta}) \d \overline{\P}_n\right| \leq 3 \epsilon \norm{f}_\infty.
\end{equation}
By definition, the difference is equal to 
\begin{align*}
\int \tilde{f}_\Delta (\bx_{\Delta^c}) &- \tilde{f}_\Delta (\bx_{\Delta_k \setminus \Delta}) \d \overline{\P}_n = 
\int \int f(\bx) \frac{\e^{-\tilde{H}_\Delta(\bx_{\Delta_k \setminus \Delta} \cup \bx_\Delta')}}{Z_\Delta(\bx_{\Delta_k\setminus \Delta})} \times \\
& \hspace*{1cm}\left( \frac{\e^{-\sum_{i\in \Delta} \sum_{j \in \Delta_k^c} \phi(|i+x_i'-j-x_j|)} Z_\Delta(\bx_{\Delta_k \setminus \Delta}) - Z_\Delta(\bx_\Delta^c)}{Z_\Delta(\bx_\Delta^c)} \right) \d \mathbb{Q}^{\otimes \Delta} \d \overline{\P}_n.
\end{align*}
Since $\Qcal$ is bounded by~\ref{Q:bounded} and by monotonicity of $\phi$ at infinity, we have that for $k$ large enough, any $x_i',x_j \in \Qcal$
\begin{equation*}
    \sum_{i \in \Delta} \sum_{j \in \Delta_k^c} \left| \phi(|i+x_i'-j-x_j|) \right| \leq \sum_{i \in \Delta} \sum_{j \in \Delta_k^c} \left| \phi(|i-j| -D ) \right|,
\end{equation*}
where $D= \diam(\Qcal)$. According to assumption \ref{H:summable}, we know that $\phi$ is summable and thus for any $\eta>0$, there exists $k$ large enough such that 
\begin{equation*}
    \sum_{i \in \Delta} \sum_{j \in \Delta_k^c} \left| \phi(|i+x_i'-j-x_j|) \right| \leq \eta.
\end{equation*}
Therefore for $k$ large enough we have 
\begin{align*}
\Big| \int& \tilde{f}_\Delta (\bx_{\Delta^c}) \d \overline{\P}_n - \tilde{f}_\Delta (\bx_{\Delta_k \setminus \Delta}) \d \overline{\P}_n \Big| \\
\leq & \norm{f}_\infty \int \int \frac{\e^{-\tilde{H}_\Delta(\bx_{\Delta_k \setminus \Delta} \cup \bx_\Delta')}}{Z_\Delta(\bx_{\Delta_k\setminus \Delta})} \times \\
& \quad \left( \left| \frac{(\e^\eta -1) Z_\Delta(\bx_{\Delta_k \setminus \Delta})}{Z_\Delta(\bx_{\Delta^c})} \right| + \left| \frac{Z_\Delta(\bx_{\Delta_k \setminus \Delta}) - Z_\Delta(\bx_{\Delta^c}) }{Z_\Delta(\bx_{\Delta^c})}\right| \right) \d \mathbb{Q}^{\otimes \Delta} \d \overline{\P}_n \\
\leq & \norm{f}_\infty \int \left( \left| \frac{(\e^\eta -1) Z_\Delta(\bx_{\Delta_k \setminus \Delta})}{Z_\Delta(\bx_{\Delta^c})} \right| + \left| \frac{Z_\Delta(\bx_{\Delta_k \setminus \Delta}) - Z_\Delta(\bx_{\Delta^c}) }{Z_\Delta(\bx_{\Delta^c})}\right| \right) \d \mathbb{Q}^{\otimes \Delta} \d \overline{\P}_n.
\end{align*}
Furthermore, for $k$ large enough we have, 
\begin{align*}
    Z_\Delta(\bx_{\Delta_k \setminus \Delta}) =& \int \e^{-\tilde{H}_\Delta(\bx_{\Delta^c} \cup \bx_\Delta') + \sum_{i\in \Delta} \sum_{j \in \Delta_k^c} \phi(|i+x_i'-j-x_j|) } \d \mathbb{Q}^{\otimes \Delta} \\
    \leq & \e^\eta Z_\Delta (\bx_{\Delta^c}),
\end{align*}
and thus
\begin{align*}
    \left| \int \tilde{f}_\Delta (\bx_{\Delta^c}) \d \overline{\P}_n - \tilde{f}_\Delta (\bx_{\Delta_k \setminus \Delta}) \d \overline{\P}_n \right| \leq \norm{f}_\infty (\e^\eta - 1)(1 + \e^\eta). 
\end{align*}
Hence by choosing $\eta>0$ small enough and $k$ large enough we obtain \eqref{ineq: dlr step3bis}.

\paragraph{Step 4.bis}
For any $k$ fixed, by convergence in the local topology we have for $n$ large enough 
\begin{equation}\label{ineq: dlr step4bis}
    \left| \int \tilde{f}_\Delta (\bx_{\Delta_k \setminus \Delta}) \d \overline{\P}_n - \int \tilde{f}_\Delta (\bx_{\Delta_k \setminus \Delta}) \d \overline{\P}_\infty \right|  \leq  \epsilon \norm{f}_\infty.
\end{equation}

\paragraph{Step 5.bis}
By following the same argument as in Step 3.bis, we have for $k$ large enough
\begin{align}\label{ineq: dlr step5bis}
    \left| \int \tilde{f}_\Delta (\bx_{\Delta^c}) \d \overline{\P}_\infty -  \int \tilde{f}_\Delta (\bx_{\Delta_k \setminus \Delta}) \d \overline{\P}_\infty \right| \leq 3 \epsilon \norm{f}_\infty.
\end{align}

\paragraph{Conclusion}
By combining \eqref{ineq: dlr step1}, \eqref{ineq: dlr step2bis}, \eqref{ineq: dlr step3bis}, \eqref{ineq: dlr step4bis} and \eqref{ineq: dlr step5bis}, we have
\begin{equation*}
    \left| \int f \d \P_\infty - \int \tilde{f}_\Delta \d \P_\infty \right| \leq 10 \epsilon \norm{f}_\infty
\end{equation*}
and we conclude in the same way as for the first part.\\

\noindent{\sc $\hookrightarrow$ \underline{Part C}. Stationarity of $\bG$ and $\bG\mid U=u$}. Parts A-B prove that $\Gcal_\Lcal(H, \mathbb Q) \neq \emptyset$ and as a consequence we have that $P_\Lcal(H,\mathbb Q) \neq \emptyset$. All that is left is to prove that the Gibbs perturbed lattice $\bG$ is a stationary point process. We start with a first observation, by definition for $\P \in \Gcal_\Lcal(H, \mathbb{Q})$ and $\bX \sim \P$ the law of $\bX$ is invariant under translation and therefore for all $i\in \Lcal$, $X_i \overset{d}{=} X_0$. We consider $W \subset \R^d$ bounded, since $(X_i)_{i\in \Lcal}$ are identically distributed we have that
\begin{align*}
\E[\# (\bG \cap W)] &= \sum_{i\in \Lcal} \mathrm P(i+X_i+U \in W) \\
&\leq \sum_{i\in \Lcal} \mathrm P(X_i \in ( W \oplus \Dcal)-i ) \\
&\leq \sum_{i\in \Lcal} \mathrm P(X_0 \in ( W \oplus \Dcal)-i ) \\
   &\leq |W \oplus \Dcal|. 
\end{align*} 
Since $\E(\# \bG \cap W)$  is finite then almost surely $\# \bG \cap W$ is finite and thus $\bG$ is almost surely locally finite. Now we consider $ t \in  \R^d$, then there is a unique $k \in \Lcal$ and $V \in \Dcal$ such that $t+U= k+V$ and we know that $V \sim \mathcal{U}_\Dcal$. Therefore, for any bounded measurable function $f$ on $\Ccal$ we have
\begin{align*}
\E\left[f(\bG + t)\right] & = \E\left[ f(\bG(\bX,t+U)) \right] \\
&=\E\left[ f(\bG(\bX,k+V)) \right] \\
& = \E\left[ f(\bG(\bX,V)) \right] = \E\left[ f(\bG )\right].
\end{align*}
Therefore, for any $t \in\R^d$ we have $\bG = \bG + t$. Since $\bG$ is almost surely locally finite and its distribution is invariant under translations of $\R^d$, it follows that $\bG$ is a stationary point process. Finally, for any $u \in \Dcal$ and $k \in \Lcal$, we also have 
\begin{equation*}
\E_u\left[f(\bG + k)\right] = \E_u\left[f(\bG (\bX, k + u))\right] = \E_u\left[f(\bG (\bX, u))\right] = \E_u\left[f(\bG )\right].  
\end{equation*}
Therefore, for any $u \in \Dcal$ the law of $\bG \mid U=u$ is invariant under translation by any element of $\Lcal$.
\end{proof}

\section{Proof of Proposition~\ref{prop:HU}}
\label{app:hyperuniformity}

\begin{proof}   
(i) Let  $\mathbb P\in \Lcal(H, \mathbb{Q})$. According to Lemma \ref{lem: specific entropy bound} we have that 
\begin{equation}
\E[|X_0|^d] \leq c \left( I_s(\mathbb P) + \log \E[\e^{c|X|^d}] \right)
\end{equation}
for $X \sim \mathbb Q$. It is not clear whether for any $\mathbb P \in P\Lcal(H, \mathbb{Q})$,  $I_s(\mathbb P)<\infty$. However we know by \eqref{ineq: bounded specific entropy} that we have at least one measure $\mathbb P_\infty$ with a finite specific entropy. Hence, $\bG \sim \mathbb P_\infty$ is hyperuniform.

\noindent(ii) By DLR equations, for any $\mathbb P \in P\Lcal(H, \mathbb{Q})$, we have
\begin{equation*}
\E[|X_0|^d] = \E\left[ \int |x|^d \frac{\e^{-h(x + U, \bG_{0^c})}}{Z_0(\bG_{0^c})} \mathbb{Q}(\d x) \right]
\end{equation*}
where $\bG_{0^c} = \{i + X_i + U , i \in \Lcal\setminus\{0\} \} $. Under the assumption~\ref{H:bounded}, there exists $M \geq 0$  such that $|h| \leq M$ and thus we have 
\begin{gather*}
\E[|X_0|^d] \leq \e^{2M} \E[|X|^d] < \infty
\end{gather*}
for $X \sim \mathbb Q$. 
\end{proof}

\section{Proofs of results related to DLR type equations} \label{app:DLR}

This Appendix focuses on the proofs of Propositions~\ref{prop:sumDLR}-\ref{prop:variational}.

\subsection{Proofs of Proposition~\ref{prop:sumDLR}}

\begin{proof}
Proposition~\ref{prop:sumDLR}(i) follows directly from summing~\eqref{eq:onesiteDLR} over all sites. Statement~(ii) is then a consequence obtained by using the stationarity of $\bG\mid U=u$ and the invariance by translation of $f$.
\end{proof}

\subsection{Proof of Proposition~\ref{prop:sum2sitesDLR}}

\begin{proof}
This result is obtained by applying Proposition~\ref{prop:sumDLR} twice.
\end{proof}

\subsection{Proof of Proposition~\ref{prop:variational}} 

\begin{proof}
Let $u \in \Dcal$ and $ i \in \Lcal_u$. From~\eqref{eq:onesiteDLR} we have
\begin{align*}
\E_u\left[ \grad \check f(i+X_i, \bG_{i^c}) \right] = \E_u \left[ \frac{1}{Z_i(\bG_{i^c})} \int_\Qcal \grad \check f(i+x, \bG_{i^c}) \Lambda(i,x, \bG_{i^c}) \d x \right].
\end{align*}
Using an integration by parts on the right-hand side term, we have
\begin{gather*}
\E_u\left[ \grad \check f(i+X_i, \bG_{i^c}) \right] = \E_u \left[ \frac{1}{Z_i(\bG_{i^c})} \oint_{\partial \Qcal} \check f(i+x, \bG_{i^c}) \e^{-h(i+x, \bG_{i^c}) -q(x)} \d x\right] \\
+ \E_u \left[ \frac{1}{Z_i(\bG_{i^c})} \int_\Qcal \check f(i+x, \bG_{i^c}) (\grad h(i+x, \bG_{i^c}) + \grad q(x)) \e^{- h(i+x,\bG_{i^c}) - q(x)} \d x \right]. 
\end{gather*}
Since $\check f$ belongs to $\Fcal_\Rcal(H, \mathbb{Q}, \Lcal_u)$, the first term of the previous equation equals 0. Then, by applying the DLR equations to the second term, we obtain
\begin{equation*}
\E_u\left[ \grad \check f(i+X_i, \bG_{i^c}) \right] = \E_u \left[ \check f(i+X_i, \bG_{i^c}) (\grad h(i+X_i, \bG_{i^c}) + \grad q(X_i) ) \right]
\end{equation*}
The result is proved by summing over all sites of $\Lcal_u$ and by applying Fubini's theorem, justified by~\eqref{condition: fubini_eq_variationnel}.
\end{proof}

\section{Proof of Theorem~\ref{thm:ergodic}} \label{app:ergodic}

The proof of this result is mainly based on the following ergodic theorem obtained by~\citep{nguyen1977integral}, which we recall here for the sake of self-consistency.

\begin{theorem}[Ergodic theorem \citep{nguyen1977integral}] \label{thm:ergodicNZ}
Let $\Lcal$ be a full rank lattice and $\P$ be an ergodic measure on $\Ccal$. Let $F_W$ be a family of measurable functions, indexed by bounded sets $W \in \R^d$, which are additive, i.e. $F_{W \cup W'} = F_W + F_{W'} - F_{W\cap W'}$, shift invariant on $\Lcal$, i.e. $F_W(\bg) = F_{W+i}(\bg+i)$ for any $i \in \Lcal$, and integrable, i.e. $\E[|F_{[0,1]^d}|] < \infty$. Then, we have that for $\P$-a.e. $\bg$,
    \begin{equation}
        \lim\limits_{n \to \infty} |W_n|^{-1} F_{W_n}(\bg) = \E[|F_{[0,1]^d}]
    \end{equation}
    where $W_n$ is a  sequence of regular bounded domains of $\R^d$ such that $W_n\to \R^d$ as $n\to \infty$. 
\end{theorem}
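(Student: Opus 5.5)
The plan is to reduce the statement to the classical multiparameter pointwise ergodic theorem for a $\Z^d$-action (Wiener's theorem, or Tempelman's version for regular averaging sets). I would then pass from unions of lattice cells to general regular domains $W_n$ by a sandwich argument.

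\textbf{Step 1: the $\Z^d$-action and cell decomposition.} Write $\Lcal=\{\sum_j k_ja_j,\ k\in\Z^d\}$ and let $\Z^d$ act on $\Ccal$ by $\tau_k\bg=\bg-\sum_jk_ja_j$. Ergodicity of $\P$ is understood with respect to this action; this is the setting in which the theorem is used, with $\P_u$, which is $\Lcal$-invariant by Theorem~\ref{thm:existence}, and then restricted to an ergodic component. Set $g:=F_{\Dcal}$. I read the integrability hypothesis and the limit with the fundamental domain $\Dcal$ in place of $[0,1]^d$, so the limit should be $\E[F_\Dcal]/|\Dcal|$; both coincide when $\Lcal=\Z^d$. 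Iterating the additivity relation over the disjoint cells $\Dcal+i$, and using $F_\emptyset=0$, gives for any finite $J\subset\Lcal$
\[
F_{\bigcup_{i\in J}(\Dcal+i)}(\bg)=\sum_{i\in J}F_{\Dcal+i}(\bg)=\sum_{i\in J}g(\bg-i),
\]
where the second equality is the shift invariance $F_{\Dcal+i}(\bg)=F_\Dcal(\bg-i)$.

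\textbf{Step 2: ergodic theorem on cell unions.} Let $J_n^-=\{i\in\Lcal:\Dcal+i\subset W_n\}$ and $J_n^+=\{i\in\Lcal:(\Dcal+i)\cap W_n\neq\emptyset\}$. Regularity of $W_n$ (for instance cubes or convex sets increasing to $\R^d$) makes $(J_n^\pm)$ a tempered, regular Følner sequence in $\Z^d$ with $|J_n^\pm|\,|\Dcal|\sim|W_n|$. Since $g\in L^1$, the pointwise ergodic theorem for $\Z^d$-actions gives, $\P$-a.s.,
\[
|J_n^\pm|^{-1}\sum_{i\in J_n^\pm}g(\tau_i\bg)\longrightarrow\E[g\mid\mathcal I]=\E[g],
\]
where $\mathcal I$ is the invariant $\sigma$-field and the last equality uses ergodicity. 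Applying the same theorem to $|g|$ shows that the boundary contribution $|W_n|^{-1}\sum_{i\in J_n^+\setminus J_n^-}|g(\tau_i\bg)|$ tends to $0$ a.s., because $|J_n^+\setminus J_n^-|=o(|W_n|)$ and both averages converge to the same limit $\E|g|$.

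\textbf{Step 3: the boundary cells (main obstacle).} By additivity,
\[
F_{W_n}=\sum_{i\in J_n^-}F_{\Dcal+i}+\sum_{i\in J_n^+\setminus J_n^-}F_{W_n\cap(\Dcal+i)}.
\]
The boundary terms involve $F$ on partial cells. These are not controlled by $|g|$ unless some monotonicity is available, so this is where I expect the difficulty. I would first assume $F_W\ge0$ and that $F$ is monotone in $W$, which holds for counting-type functionals. Then $F_{\bigcup_{J_n^-}}\le F_{W_n}\le F_{\bigcup_{J_n^+}}$, and Step 2 closes the argument. For the functionals actually used here, namely $A_{W_n}$ and $B_{W_n}$, $F_W$ is a sum over $i\in\Lcal_U\cap W$ of site terms. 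It therefore splits as $F_W=F_W^+-F_W^-$, where each part is additive, shift invariant, nonnegative, monotone and integrable, and the monotone case applies to each part separately. If a genuinely general version is needed, I would instead assume that the maximal function $G(\bg)=\sup_{V\subset\Dcal}|F_V(\bg)|$ is integrable. I would then bound each boundary term by $G(\tau_i\bg)$ and apply Step 2 to $G$ in place of $|g|$.
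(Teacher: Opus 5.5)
The paper gives no proof of this statement; it quotes it from Nguyen and Zessin, so there is no in-paper argument to compare with. Your route is the standard proof of that theorem: tile $\R^d$ by the cells $\Dcal+i$, apply the multiparameter pointwise ergodic theorem to $k\mapsto F_\Dcal(\tau_k\bg)$ along the nested F{\o}lner sets $J_n^\pm$, and control the partial cells at the boundary. It is correct for the properly formulated result.

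Your reformulations are forced rather than cosmetic. Since $\P_u$ is only $\Lcal$-invariant, the limit must be $\E[F_\Dcal]/|\Dcal|$. For instance, with $\Lcal=2\Z^d$, $F_W=\#(\bG\cap W)$ and small perturbations, $\E_u F_{[0,1]^d}$ depends on $u$ and is not the limit. The condition $F_\emptyset=0$ must be assumed, because the inclusion--exclusion identity is also satisfied by $F_W\equiv c$. Finally, ergodicity has to be understood with respect to $\Lcal$.

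Your worry in Step~3 is justified: with only $\E|F_{[0,1]^d}|<\infty$ the statement is false, so the extra hypothesis is not an artefact of your method. Take $d=1$, $\Lcal=\Z$, and $\P$ a stationary Poisson process. Let $\xi(\bg)=1/\min(\bg\cap[0,1))$, with $\xi=0$ if $\bg\cap[0,1)=\emptyset$; then the $\xi(\bg-j)$, $j\in\Z$, are i.i.d.\ with $\E\xi=\infty$. Set $F_W(\bg)=\sum_{j\in\Z}\xi(\bg-j)\bigl(\1_W(j+\tfrac12)-\1_W(j+\tfrac14)\bigr)$. This family is additive and $\Z$-shift invariant, with $F_\emptyset=0$ and $F_{[j,j+1]}\equiv 0$. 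Yet for $W_n=[-n,n+0.3]$ one gets $F_{W_n}=-\xi(\bg-n)$, and the second Borel--Cantelli lemma gives $\limsup_n|F_{W_n}|/|W_n|=\infty$ a.s.

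The missing assumption is the domination hypothesis under which the Nguyen--Zessin theorem is usually stated: an integrable $Y$ with $|F_V|\le Y$ for all admissible $V\subseteq\Dcal$. This is exactly your maximal-function condition. Phrasing it with a dominating $Y$ also avoids the measurability issue of a supremum over uncountably many $V$.

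For the uses in the paper ($A_{1,W_n}$ and $\widetilde B_{1,W_n}$), your positive/negative splitting suffices. What must then be integrable is the absolute site sum over one cell, e.g.\ $\E_u\sum_{i\in\Lcal_U:\, i+X_i\in\Dcal}|f(i,X_i,\bG_{i^c};\theta)|$, not merely $|F_\Dcal|$ as the paper asserts.

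Likewise, your reading of ``regular'' should be stated as a hypothesis: nested convex sets, or sets squeezed between cubes of comparable volume, with inradius tending to infinity. Almost-sure convergence along arbitrary non-nested sequences $W_n\to\R^d$ can fail.
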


%\begin{theorem}[Ergodic theorem on a lattice] \label{thm:ergodiclattice} 
%    Let $P$ an ergodic measure on $(\R^d)^\Lcal$ then for any measurable function $f$ such that $E_P[|f|]<\infty$ we have that for $P$-a.e. $\bg$
%    \begin{equation}
%        \lim\limits_{n \to \infty} |\Lambda_n|^{-1} \sum_{i \in \Lambda_n} f(\bg+i) = \E_P[f]
%    \end{equation}
%    where $\Lambda_n = [-n,n]^d \cap \Lcal$.
%\end{theorem}

\begin{proof}
(i) Let $u \in \Dcal$. Due to the decomposition of stationary measures as a mixture of ergodic measures, see \citep{preston1976random}, and by Theorem~\ref{thm:existence}, it suffices to prove Theorem~\ref{thm:ergodic} by assuming that $\P_u$ is ergodic. 
We introduce the following random variables.
\begin{align*}
%A_{W_n} &= \sum_{i\in \Lcal_U} b_n(i,X_i)f_l(i,X_i,\bG_{i^c};\theta) \\
A_{1,W_n} &= \sum_{i\in \Lcal_U}  \1(i+X_i\in W_n\ominus R)f(i,X_i,\bG_{i^c};\theta) \\
A_{2,W_n} &= \sum_{i\in \Lcal_U}  \1(i+X_i\in W_n\ominus R, X_i \in \underline{M}_n, i\in (W_n\ominus(m_n+R))^c)f(i,X_i,\bG_{i^c};\theta) \\
A_{3,W_n} &= \sum_{i\in \Lcal_U}  \1(i+X_i\in W_n\ominus R, X_i \in \overline{M}_n, i\in (W_n\ominus(m_n+R))^c)f(i,X_i,\bG_{i^c};\theta) 
\end{align*}
and
\begin{align*}
%B_{W_n} &= \sum_{i\in \Lcal_U} \int_{\Qcal} %b_n(i,x)f_l(i,x,\bG_{i^c};\theta)\Lambda_n(i,x,\bG_{i^c};\theta) \d x \\
%\widetilde{B}_{W_n} &= \sum_{i\in \Lcal_U} \int_{\Qcal} %b_n(i,x)f_l(i,x,\bG_{i^c};\theta)\Lambda(i,x,\bG_{i^c};\theta) \d x \\
\widetilde B_{1,W_n} &= \sum_{i\in \Lcal_U} \int_{\Qcal} \1(i\in W_n \ominus (m_n+R)f(i,x,\bG_{i^c};\theta)\Lambda(i,x,\bG_{i^c};\theta) \d x \\
\widetilde B_{2,W_n} &= \sum_{i\in \Lcal_U} \int_{\Qcal} \1(i\in W_n \ominus(m_n+R), x \in \underline{M_n}, i+x \in (W_n\ominus R)^c) \times \\
&\qquad \qquad f(i,x,\bG_{i^c};\theta)\Lambda(i,x,\bG_{i^c};\theta) \d x \\
\widetilde B_{3,W_n} &= \sum_{i\in \Lcal_U} \int_{\Qcal} \1(i\in W_n \ominus(m_n+R), x \in \overline{M_n}, i+x \in (W_n\ominus R)^c) \times \\
&\qquad \qquad f(i,x,\bG_{i^c};\theta)\Lambda(i,x,\bG_{i^c};\theta) \d x 
\end{align*}
where for the sequence $(m_n)_n$ defined in~\ref{M:model}, we let $\overline M_n = \{x \in \Qcal: |x|\ge m_n\}$ and $\underline{M}_n = \Qcal \setminus \overline M_n$.
We can observe, by shortening a little bit the notation, that
\begin{align*}
A_{W_n}:= A_{W_n}(f,\bG;\theta) &= A_{1,W_n} - A_{2,W_n} -A_{3,W_n}\\
\widetilde{B}_{W_n}:= \widetilde{B}_{W_n}(f,\bG;\theta) &=\widetilde{B}_{1,W_n} -\widetilde{B}_{2,W_n} -\widetilde{B}_{3,W_n} \\
\mathrm{DLR}_n := \mathrm{DLR}_n(f, \bG;\theta) &= A_{W_n} - B_{W_n} \\
\widetilde{\mathrm{DLR}}_n := \widetilde{\mathrm{DLR}}_n(f, \bG;\theta) &= A_{W_n} - \widetilde{B}_{W_n} \\
\mathrm{DLR}_n - \widetilde{\mathrm{DLR}}_n &= B_{W_n} - \widetilde B_{W_n}.
\end{align*}
We point out the following facts: for any $i\in \Lcal_U$ and $x\in \Qcal$,
\begin{align}
\1(i+X_i\in W_n\ominus R, X_i \in \underline{M}_n, i\in (W_n\ominus(m_n+R))^c)&=0\label{eq:trick1a}\\
\1(i\in W_n \ominus(m_n+R), x \in \underline{M_n}, i+x \in (W_n\ominus R)^c)&=0\label{eq:trick1b}\\
\1(i\in W_n \ominus (m_n+R), x \in \overline{M_n}, i+x \in (W_n\ominus R)^c)& \le \1(i\in W_n) \label{eq:trick2}
\end{align}
and that as $n\to \infty$
\begin{equation}\label{eq:trick3}
|W_n| \sim |W_n\ominus R|\sim |W_n \ominus(m_n+R)|
\text{ and }
|W_n \oplus m_n \setminus W_n| \sim 2d m_n |W_n|^{1-1/d}.
\end{equation}
In particular, we can observe that $A_{2,W_n}=\widetilde B_{2,W_n}=0$.

The proof of Theorem~\ref{thm:ergodic}(i) reduces to showing that, as $n\to \infty$ and given $U=u$, almost surely  $|W_n|^{-1}A_{1,W_n}$ and $|W_n|^{-1}\widetilde B_{1,W_n}$ tend to~\eqref{eq:limA}-\eqref{eq:limB} respectively, and that $|W_n|^{-1}C_{W_n}\to 0$ for $C_{W_n}=A_{3,W_n}, \widetilde B_{3,W_n}$ and $B_{W_n}-\widetilde B_{W_n}$.

\paragraph{Proof that $|W_n|^{-1}A_{1,W_n}$ and $|W_n|^{-1}\widetilde B_{1,W_n}$ tend to~\eqref{eq:limA}-\eqref{eq:limB} respectively}

We first focus on the term $A_{1,W_n}$. Let $\tilde \bG$ be the marked point process defined by $\tilde \bG=\{(i+X_i,X_i), i\in \Lcal_U\}$. By assumption on $\bG$ and $f$, one can rewrite $A_{1,W_n}$ as 
\[
A_{1,W_n} = \sum_{z=(y,x) \in \tilde \bG \cap ((W_n\ominus R)) \times \Qcal)}  f_1(x)f_2(y,\tilde \bG \setminus z) = 
\sum_{z\in \tilde \bG \cap (W_n\ominus R) \times \Qcal)}  \tilde f (z,\tilde \bG)
\]
where we observe that $\tilde f$ is shift invariant with respect to the first coordinate of $z$. By assumption, $A_1$ is furthermore additive and such that $\E_u[|A_{1,[0,1]^d}|]<\infty$. We are in a position to apply Theorem~\ref{thm:ergodicNZ}, which leads to  $\P_u$-a.s. as $n\to \infty$, $|W_n|^{-1}A_{1,W_n} \to \Eu[A_{1,[0,1]^d}]$. Finally, using Proposition~\ref{prop:sumDLR}(ii), observe that 
\begin{align*}
\E_u[A_{1,[0,1]^d}] %&= %\E_u \left[ \sum_{i\in \Lcal_U} f_{[0,1]^d} (i,X_i,\bG_{i^c}) \right]\\ 
%&=\E_u \left[ \sum_{i\in \Lcal_U} \int_{\Qcal} f_{[0,1]^d} (i,x,\bG_{i^c}; \theta) \Lambda (i,x,\bG_{i^c}; \theta^\star) \d x \right]\\
& = \sum_{i \in \Lcal_u} \int_{\Qcal} \1_{[0,1]^d}(i+x) \E_u \left[ 
 f(i,x,\bG_{i^c}; \theta) \Lambda (i,x,\bG_{i^c}; \theta^\star)\right] \d x \\
& = \sum_{i \in \Lcal_u} \int_{\Qcal} \1_{[0,1]^d}(i+x) \E_u \left[ 
 f(0,x,\bG_{0^c}; \theta) \Lambda (0,x,\bG_{0^c}; \theta^\star)\right] \d x \\
 & =  \int_{\Qcal}  \E_u \left[
 f(0,x,\bG_{0^c};\theta) \Lambda (0,x,\bG_{0^c};\theta^\star) \right] \d x .
\end{align*}
For the term, $\widetilde{B}_{1,W_n}$, assumptions of Theorem~\ref{thm:ergodic} allow us to apply Theorem~\ref{thm:ergodicNZ} to $\widetilde B_{1,W_n}$. Hence, as $n\to \infty$ and $\P_u$-a.s.
\begin{equation*}
|W_n|^{-1} \widetilde B_{1,W_n} {\longrightarrow} \E_u \left[ \int_{\Qcal}  
 f(0,x,\bG_{0^c};\theta) \Lambda (0,x,\bG_{0^c};\theta) \d x \right]. 
\end{equation*}

%\paragraph{Proof that %$|W_n|^{-1}A_{2,W_n}\stackrel{a.s.}{\to} 0$} Using \ref{Af}, \eqref{eq:trick1} and \eqref{eq:trick3}, there exists $c>0$ such that \begin{align*}
%|W_n|^{-d-1}\Eu|A_{2,W_n}|^{d+1} &\leq c \; \frac{(\#\Lcal_u \cap (W_n\oplus m_n \setminus W_n))^{d+1}}{|W_n|^{d+1}} \\
%&= O \left( \frac{|W_n\oplus m_n \setminus W_n|^{d+1}}{|W_n|^{d+1}}\right)\\
%&= O(m_n^{d+1} |W_n|^{-1-1/d} )\\
%&= O(   |W_n|^{-1-1/d} \; \log(|W_n|)^{d+1}).
%\end{align*}
%Now, we use~\ref{M:model}, Markov's inequality and %Borel-Cantelli's lemma to conclude this step.

\paragraph{Proof that $|W_n|^{-1}A_{3,W_n}\stackrel{a.s.}{\to} 0$} Using the stationarity of $\bG \mid U=u$, \ref{Af} and~\eqref{eq:trick2}, there exists $c>0$ such that
\begin{align*}
|W_n|^{-1} \Eu |A_{3,W_n}| &\le c |W_n|^{-1} \# (\Lcal_u \cap W_n) \int_{\overline M_n} \lambda_1(x;\theta^\star) \d x \\
&\le c \; \P(X \ge m_n).
\end{align*}
for $X\sim \mathbb Q$. Using \ref{M:model} and Markov's inequality, it is clear that $\P(X \ge m_n)=O(e^{-m_n})= O(|W_n|^{-\beta})$. Finally, Markov's inequality,~\ref{M:model} and Borel-Cantelli's lemma conclude this step.

\paragraph{Proof that $|W_n|^{-1}\widetilde B_{3,W_n}\stackrel{a.s.}{\to} 0$} 

This result is proved along similar lines as the previous result, except we now combine~\ref{Bf}, \ref{M:model} and \eqref{eq:trick1b}-\eqref{eq:trick3}.

\paragraph{Proof that $|W_n|^{-1}(B_{W_n}-\widetilde B_{W_n})\stackrel{a.s.}{\to} 0$}

We first observe that

\begin{align}
|B_{W_n}-\widetilde B_{W_n}| &\le \int_{\Qcal}\int_{\Qcal}  Z_n(i,x,y,\bG_{i^c};\theta) (1-b_n(i,y)) \d x \d y \label{eq:BBtilde}
\end{align}
with
\begin{equation*}
Z_n(i,x,y,\bG_{i^c};\theta) = b_n(i,x) |f(i,x,\bG_{i^c}; \theta)| \Lambda(i,x,\bG_{i^c};\theta)\Lambda_n(i,y,\bG_{i^c};\theta)
\end{equation*}
Now, we decompose the right-hand side of~\eqref{eq:BBtilde} as $\Delta_{\underline M_n}+\Delta_{\overline M_n}$, where
\begin{align*}
\Delta_{\underline M_n}&=\int_{\Qcal}\int_{\Qcal}  Z_n(i,x,y,\bG_{i^c};\theta) \times\\
&\qquad \qquad 
\1(i\in W_n\ominus(m_n+R), y\in \underline{M_n}, i+y \in (W_n\ominus R)^c) \d x \d y.\\
\Delta_{\overline M_n}&= \int_{\Qcal}\int_{\Qcal}  Z_n(i,x,y,\bG_{i^c};\theta) \times \\
&\qquad\qquad\1(i\in W_n\ominus(m_n+R), y\in \overline{M_n}, i+y \in (W_n\ominus R)^c) \d x \d y.
\end{align*}
Using~\eqref{eq:trick1b}, $\Delta_{\underline M_n}=0$.
We leave the reader to check that using \ref{Bf} and the same arguments as before, we have that
\begin{align*}
%    |W_n|^{-d-1} \Eu|\Delta_{\underline M_n}|^{d+1} &= O( |W_n|^{-d-1} \; \log(|W_n|^{d+1}))\\
    |W_n|^{-1} \Eu|\Delta_{\overline M_n}| &= O( |W_n|^{-\beta})
\end{align*}
whereby the result is deduced using Markov's inequality and Borel-Cantelli's lemma applied to $|W_n|^{-1}\Delta_{\overline M_n}$. \\

%$|W_n|^{-1}\Delta_{\underline M_n}$ and 

(ii) Finally, to prove~\eqref{DLRDLRtildeprobability}, we observe that the previous developments also prove that
\begin{align*}
%  \Delta_{\underline M_n} &= O_{\P_u}(\E_u|\Delta_{\underline M_n}|) =     O_{\P_u}( |W_n|^{1-1/d} \log |W_n| ) \\
 \Delta_{\underline M_n} &= O_{\P_u}(\E_u|\Delta_{\overline M_n}|) =   O_{\P_u}( |W_n|^{1-\beta} ).
\end{align*}
Therefore,
%$B_{W_n}-\widetilde B_{W_n}=O_{\P_u}( |W_n|^{1-1/d} \log |W_n| )$ 
$B_{W_n}-\widetilde B_{W_n}=O_{\P_u}( |W_n|^{1-\beta}  )$ 
which yields the result.
\end{proof}

\section{Proof of Theorem~\ref{thm:convTF}}  \label{app:convTF}

\subsection{Auxiliary result}

\begin{lemma}\label{lem:DLRk}
Assume~\ref{M:model}-\ref{M:ergodic}. For $\ell=0,1,2$, $l=1,\dots,L$ and $\theta\in \Theta$, we have $\P_u$-a.s. as $n\to \infty$
\begin{equation}\label{eq:DLRk}
    |W_n|^{-1} \mathrm{DLR}^{(\ell)}_{n}(f_l,\bG; \theta) \stackrel{a.s.}{\longrightarrow} \Dcal^{(\ell)}(f_l;\theta)
\end{equation}
where
\begin{align*}
\Dcal^{(0)}(f_l;\theta) = \Dcal(f_l;\theta) &= \Eu 
\left[ 
\int_{\Qcal} f_l(0,x,\bG_{0^c}; \theta) \left( 
\Lambda(0,x,\bG_{0^c};\theta^\star)-\Lambda(0,x,\bG_{0^c};\theta)
\right) \d x
\right]\\
\Dcal^{(1)}(f_l; \theta) &= \Eu 
\left[ 
\int_{\Qcal} f_l^{(1)}(0,x,\bG_{0^c}; \theta) \left( 
\Lambda(0,x,\bG_{0^c};\theta^\star)-\Lambda(0,x,\bG_{0^c};\theta)
\right) \d x
\right]\\
&\quad - \Eu 
\left[ 
\int_{\Qcal} f_l(0,x,\bG_{0^c}; \theta) \Lambda^{(1)}(0,x,\bG_{0^c};\theta) \d x
\right]\\
\Dcal^{(2)}(f_l; \theta) &= 
\Eu 
\left[ 
\int_{\Qcal} f_l^{(2)}(0,x,\bG_{0^c}; \theta) \left( 
\Lambda(0,x,\bG_{0^c};\theta^\star)-\Lambda(0,x,\bG_{0^c};\theta)
\right) \d x
\right]\\
& \quad - 
\Eu 
\left[ 
\int_{\Qcal} 
f_l^{(1)}(0,x,\bG_{0^c}; \theta) \Lambda^{(1)}(0,x,\bG_{0^c};\theta)^\top  \d x \right]\\
&\quad - 
\Eu 
\left[ 
\int_{\Qcal} 
\Lambda^{(1)}(0,x,\bG_{0^c}; \theta) f_l^{(1)}(0,x,\bG_{0^c};\theta)^\top  \d x \right]\\
&\quad - 
\Eu 
\left[ 
\int_{\Qcal} 
f_l(0,x,\bG_{0^c}; \theta) \Lambda^{(2)}(0,x,\bG_{0^c};\theta)^\top  \d x \right].
\end{align*}
In particular, we have $\Dcal(f_l;\theta^\star)=0$ and $\Dcal^{(1)}(f_l; \theta^\star) = - \mathcal I(f_l;\theta^\star)$, where $\mathcal I(f_l;\theta^\star)$ is given by~\eqref{eq:Il}.
\end{lemma}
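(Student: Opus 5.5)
The plan is to reduce each of the three statements to Theorem~\ref{thm:ergodic}(i), applied to suitably chosen test functions. Differentiating $\mathrm{DLR}_n(f_l,\bG;\theta)=A_{W_n}(f_l,\bG;\theta)-B_{W_n}(f_l,\bG;\theta)$ with respect to $\theta$ is legitimate because the sums are locally finite given the border factor $b_n$, and because $f_l,\Lambda$ are $C^2$ by \ref{M:f}. This gives
\begin{align*}
\mathrm{DLR}^{(1)}_n(f_l;\theta)&=A_{W_n}(f_l^{(1)};\theta)-B_{W_n}(f_l^{(1)};\theta)-\sum_{i\in\Lcal_U}\int_\Qcal b_n(i,x) f_l(i,x,\bG_{i^c};\theta)\Lambda_n^{(1)}(i,x,\bG_{i^c};\theta)\d x,
\end{align*}
and an analogous four-term expression for $\mathrm{DLR}^{(2)}_n$, containing $f_l^{(2)}$, the two cross terms $f_l^{(1)}\Lambda_n^{(1)\top}$ and $\Lambda_n^{(1)}f_l^{(1)\top}$, and the term $f_l\Lambda_n^{(2)}$. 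The first pair of terms in each case is exactly $\mathrm{DLR}_n(g;\theta)$ with $g=f_l^{(\ell)}$ componentwise. Since \ref{M:ergodic} imposes \ref{Af}-\ref{Bf} on these components, \eqref{eq:limDLR} gives the limit $\Eu\int f_l^{(\ell)}(\Lambda(\cdot;\theta^\star)-\Lambda(\cdot;\theta))\d x$ directly, and for $\ell=0$ this already yields $\Dcal^{(0)}$.

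For the remaining terms I would write $\Lambda_n^{(1)}=\Lambda_n\cdot(\Lambda_n^{(1)}/\Lambda_n)$. For exponential families, $\Lambda^{(1)}/\Lambda=-S+\int S\Lambda$ and $\Lambda_n^{(1)}/\Lambda_n=-S+\int b_nS\Lambda_n$. Each remaining term is therefore of the form $B_{W_n}(g;\theta)$ with $g=f_l\Lambda^{(1)}_j/\Lambda$, $(f^{(1)}_l)_j(\Lambda^{(1)})_k/\Lambda$ or $f_l(\Lambda^{(2)})_{jk}/\Lambda$, up to replacing $\Lambda$ by $\Lambda_n$ inside the ratio. These are precisely the products listed in \ref{M:ergodic} for which \ref{Bf} is assumed. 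Applying \eqref{eq:limB}, together with \eqref{eq:DLRDLRtilde}, to control $B_{W_n}-\widetilde B_{W_n}$ gives the limit $\Eu\int g\,\Lambda(\cdot;\theta)\d x$, and this reproduces the remaining terms of $\Dcal^{(1)}$ and $\Dcal^{(2)}$. The decomposition requirement in \ref{M:f} ($\phi_1\phi_2$ with $\phi_2$ satisfying \ref{TI}-\ref{FR}) is inherited by these products, since $S_1$ depends only on $x$ and $S_2$ satisfies \ref{TI}-\ref{FR}.

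\textbf{Main obstacle.} I expect the hardest step to be the replacement of $\Lambda_n^{(k)}/\Lambda_n$ by $\Lambda^{(k)}/\Lambda$. The normalizing integrals $\int b_nS\Lambda_n$ and $\int S\Lambda$ differ only on the set $\{y: 1-b_n(i,y)=1\}$ with $b_n(i,x)=1$, which is the same situation as $\Delta_{\overline M_n}$ in the proof of Theorem~\ref{thm:ergodic}. By \eqref{eq:trick1b}, only $|y|\ge m_n$ contributes. The second condition of \ref{Bf} then bounds the $\Eu$-expectation of the error by $O(|W_n|^{1-\beta})$, and Markov's inequality with Borel--Cantelli (using $\sum|W_n|^{-\beta}<\infty$) yields almost sure negligibility after division by $|W_n|$. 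As in the proof of Theorem~\ref{thm:ergodic}, one first reduces to ergodic $\P_u$.

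Finally, at $\theta=\theta^\star$ the difference $\Lambda(\theta^\star)-\Lambda(\theta)$ vanishes. Hence $\Dcal(f_l;\theta^\star)=0$, and $\Dcal^{(1)}(f_l;\theta^\star)=-\Eu\int f_l\Lambda^{(1)}(\cdot;\theta^\star)\d x=-\mathcal I_l(\theta^\star)$ by \eqref{eq:Il}.
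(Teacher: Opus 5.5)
Your proposal is correct and follows the paper's route. The paper's proof is only the remark that the lemma is a direct application of Theorem~\ref{thm:ergodic} (under \ref{M:model}--\ref{M:ergodic}) to the terms obtained by differentiating $\mathrm{DLR}_n$ in $\theta$, and your write-up makes that application explicit. You are more careful than the paper in noticing that $\Lambda_n^{(k)}/\Lambda_n\neq\Lambda^{(k)}/\Lambda$ and handling the discrepancy with the same $\Delta_{\overline M_n}$-type Markov/Borel--Cantelli argument; strictly, though, that step needs moment bounds like the second line of \ref{Bf} with an extra factor $|S(0,y,\bG_{0^c})|$ on the $y$-variable, and these are not literally among the conditions listed in \ref{M:ergodic}.
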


\begin{proof}
The proof of this result is a direct application of Theorem~\ref{thm:ergodic}, valid under the assumptions~\ref{M:model}-\ref{M:ergodic}.
\end{proof}

\subsection{Proof of Theorem~\ref{thm:convTF}}

\begin{proof} (i) We have, by Proposition~\ref{prop:sumDLR}, that
\begin{align*}
\Var_u\left[ \widetilde{\mathrm{DLR}}_n(f_l,\bG;\theta^\star)\right] &= \Eu \left[ \widetilde{\mathrm{DLR}}_n(f_l;\theta^\star)^2\right]\\
&= \E \Big[
\Big\{
\sum_{i \in \Lcal_U} b_n(i,X_i)f_{l}(i,X_i, \bG_{i^c}) \\
& \qquad - 
\int  b_n(i,x)f_{l}(i,x, \bG_{i^c})  \Lambda(i,x,\bG_{i^c}) \d x
\Big\}^2
\Big]
\end{align*}
where to abuse and ease notation we let $f_{l}(i,x,\bg) =f_{l}(i,x,\bg;\theta^\star)$, $\Lambda(i,x,\bg) =\Lambda(i,x,\bg; \theta^\star)$ and $\int=\int_{\Qcal}$. Using Propositions~\ref{prop:sumDLR} and~\ref{prop:sum2sitesDLR}, we decompose the conditional variance $$\Var_u\left[ \widetilde{\mathrm{DLR}}_n(f_l,\bG;\theta^\star)\right]=T_1+T_2$$ where
\begin{align*}
T_1  &= \Eu \Big[
\sum_{i \in \Lcal_U} \int b_n(i,x)f_{l}(i,x,\bG_{i^c})^2\Lambda(i,x,\bG_{i^c})\d x \\
&\qquad \qquad \qquad -  
\Big\{
\int b_n(i,x) f_{l}(i,x,\bG_{i^c})\Lambda(i,x,\bG_{i^c})\d x
\Big\}^2
\Big]
\\
T_2 & = \Eu \left[
\sum_{i,j \in \Lcal_U}^{\neq} \int\!\!\!\!\int
b_n(i,x)b_n(j,y) f_{l}(i,x,\bG_{i^c}) f_{l}(j,y,\bG_{j^c})
\Lambda(i,x,\bG_{i^c}) \Lambda(j,y,\bG_{j^c})
\Delta \d x \d y
\right]
\end{align*}
and
\begin{equation*}
\Delta = \Delta(i,x,j,y,\Gamma_{i^c},\Gamma_{j^c},\Gamma_{(ij)^c})= \frac{\Lambda(i,x,\bG_{(ij)^c})\Lambda(j,y,\bG_{(ij)^c} \cup \{i+x\})}{\Lambda(i,x,\bG_{i^c}) \Lambda(j,y,\bG_{j^c})} -1.     
\end{equation*}
Regarding the term $T_1$, using the stationarity of $\bG \mid U=u$, Fubini's theorem and by Assumption~\ref{M:ergodic}, we have the simple upper-bound
\begin{align}
T_1 &\leq  \int \# (\Lcal_u \cap (W_n \ominus (m_n+R ))) \; \Eu \left[ 
|f_l(0,x,\bG_{0^c})|\Lambda(0,x,\bG_{0^c})\right] \d x = O(|W_n|). \label{eq:asympT1}
\end{align}
Let us consider the term $T_2$. Let $a,b \in \R^d$ and $\bg \in \Ccal$. By definition of $\lambda_2$, we have
\begin{equation*}
    \lambda_2(a\cup b ,\bg ) =\lambda_2(a ,\bg) \lambda_2(b,\bg \cup a) =\lambda_2(b ,\bg) \lambda_2(a,\bg \cup b) 
\end{equation*}
which can be rewritten as
\begin{equation}\label{eq:tricklambda2}
\frac{\lambda_2(a ,\bg)}{\lambda_2(a,\bg \cup b)} = \frac{\lambda_2(b, \bg)}{\lambda_2(b,\bg \cup a)}.
\end{equation}
Using this, we rewrite $\Delta+1$ as
\begin{align*}
\Delta+1 &= A(i,x,X_i,j,y,X_j,\bG_{i^c},\bG_{j^c}, \bG_{(ij)^c})  \times \\ 
& \qquad \frac{\int\int \lambda(i,\tilde x, \bG_{i^c})\lambda(j,\tilde y, \bG_{j^c} )\d \tilde x \d \tilde y}{\int\int \lambda(i,\tilde x, \bG_{i^c})\lambda(j,\tilde y, \bG_{j^c} )
A(i,\tilde x,X_i,j, \tilde y,X_j,\bG_{i^c},\bG_{j^c}, \bG_{(ij)^c})
\d \tilde x \d \tilde y } 
\end{align*}
where for any $i,j \in \Lcal_U$, $x,x^\prime,y,y^\prime \in \Qcal$, we have, using the definition of $\Lambda$ 
\begin{align*}
A(i,x,X_i,j,y,X_j,\bG_{i^c},\bG_{j^c}, \bG_{(ij)^c}) &= \frac{\lambda_2(i+x,\bG_{(ij)^c})}{\lambda_2(i+x,\bG_{i^c})}
\frac{\lambda_2(j+y,\bG_{(ij)^c}\cup \{i+x\})}{\lambda_2(j+y,\bG_{j^c})} 
\end{align*}
Using~\eqref{eq:tricklambda2}, this term can be rewritten as
\begin{align} 
A(i,x,X_i,j,y,X_j,\bG_{i^c},\bG_{j^c}, \bG_{(ij)^c}) &= \frac{\lambda_2(j+X_j,\bG_{(ij)^c}}{\lambda_2(j+X_j,\bG_{(ij)^c)} \cup \{i+x\})}\nonumber \\
&\qquad \times
\frac{\lambda_2(i+X_i,\bG_{(ij)^c} \cup \{i+x\})}{\lambda_2(i+X_i,\bG_{(ij)^c} \cup \{i+x\} \cup \{j+y\})} \nonumber\\
&\qquad \times
\frac{\lambda_2(j+y, \bG_{j^c}\cup\{i+x\})}{\lambda_2(j+y, \bG_{j^c})}. \label{eq:A}
\end{align}
Hence, we view the term $T_2$ as a general functional of $i,X_i,j,X_j$ and $\bG$, to which we can apply Proposition~\ref{prop:sum2sitesDLR} once more. We obtain that 
\begin{align*}
T_2 &= \Eu \sum_{i,j \in \Lcal_U}^{\neq} \int\int\int\int b_n(i,x)b_n(j,y) 
F_l(i,x,x^\prime,j,y, y^\prime, \bG_{i^c},\bG_{j^c},\bG_{(ij)^c}) \times\\
& \hspace*{2cm} \Delta(i,x,x^\prime,j,y, y^\prime, \bG_{i^c},\bG_{j^c},\bG_{(ij)^c}) 
\lambda_1(x)\lambda_1(x^\prime)\lambda_1(y)\lambda_1(y^\prime) \d x \d x^\prime \d y \d y^\prime
\end{align*}
where
\begin{align*}
F_l(i,x,x^\prime,j,y, y^\prime, \bG_{i^c},\bG_{j^c},\bG_{(ij)^c}) &= 
f_l(i,x,\bG_{i^c})\lambda_2(i+x,\bG_{i^c})  \\
&\quad \times f_l(j,y,\bG_{j^c})\lambda_2(j+y,\bG_{j^c}) \\
&\quad\times \lambda_2(i+x^\prime , \bG_{(ij)^c}) \lambda_2(j+y^\prime, \bG_{(ij)^c} \cup \{i+x^\prime\}).
\end{align*}
Similarly to the term $T_1$, we use Fubini's theorem and the stationarity of $\Gamma \mid U=u$ to derive
\begin{align*}
T_2 &= \# (\Lcal_u \cap (W_n\ominus (m_n+R))) \times \\
& \quad \Eu \sum_{j \in \Lcal_U \setminus \{0\} } \int\int\int\int \1(x \in W_n \ominus R) b_n(j,y) F_l(0,x,x^\prime,j, y, y^\prime, \bG_{0^c},\bG_{j^c},\bG_{(0j)^c}) \times\\
& \hspace*{2cm} \Delta(0,x,x^\prime,j, y, y^\prime, \bG_{0^c},\bG_{j^c},\bG_{(0j)^c}) 
\lambda_1(x)\lambda_1(x^\prime)\lambda_1(y)\lambda_1(y^\prime) \d x \d x^\prime \d y \d y^\prime \\
&= \# (\Lcal_u \cap W_n\ominus(m_n + R)) \times (T_1^\prime +T_2^\prime)
\end{align*}
where
\begin{align*}
T_1^\prime & = \Eu \sum_{j \in \Lcal_U} \int\int\int\int \1(x \in W_n \ominus R) b_n(j,y)  \\
& \hspace*{2cm}F_l(0,x,x^\prime,y, y^\prime, \bG_{0^c},\bG_{j^c},\bG_{(0j)^c}) \times R_1(0,x,x^\prime,j,y,y^\prime , \bG_{0^c},\bG_{j^c},\bG_{(0j)^c}) \times \\ 
& \hspace*{2cm} \left\{ A(0,x,x^\prime,j,y,y^\prime , \bG_{0^c},\bG_{j^c},\bG_{(0j)^c})  -1\right\}
\d x \d x^\prime \d y \d y^\prime\\
T_2^\prime &=\Eu \sum_{j \in \Lcal_U} \int\int\int\int \int\int \1(x \in W_n \ominus R) b_n(j,y)\\
& \hspace*{2cm}F_l(0,x,x^\prime,y, y^\prime, \bG_{0^c},\bG_{j^c},\bG_{(0j)^c})
\times R_2(0,\tilde x,x^\prime,j,\tilde y,y^\prime, \bG_{0^c},\bG_{j^c},\bG_{(0j)^c}) \times \\ 
& \hspace*{2cm} \left\{ A(0,\tilde x,x^\prime,j,\tilde y,y^\prime , \bG_{0^c},\bG_{j^c},\bG_{(0j)^c})  -1\right\}
 \lambda_1(\tilde x)\lambda_1(\tilde y) \d x \d x^\prime \d \tilde x \d y \d y^\prime \d \tilde y \\
\end{align*}
where 
\begin{align*}
R_1 &= \frac{\int\int \lambda(0,\tilde x, \bG_{0^c})\lambda(j,\tilde y, \bG_{j^c} )\d \tilde x \d \tilde y}{\int\int \lambda(0,\tilde x, \bG_{0^c})\lambda(j,\tilde y, \bG_{j^c} )
A(0,\tilde x,x^\prime,j, \tilde y,y^\prime,\bG_{0^c},\bG_{j^c}, \bG_{(0j)^c})
\d \tilde x \d \tilde y } 
\end{align*}
and 
\begin{align*}
R_2 &=
\frac{\lambda_2(\tilde x, \bG_{0^c}) \lambda_2(j+\tilde y, \bG_{j^c})}
{\int\int \lambda(0,\tilde x, \bG_{0^c})\lambda(j,\tilde y, \bG_{j^c} )
A(0,\tilde x,x^\prime,j, \tilde y,y^\prime,\bG_{0^c},\bG_{j^c}, \bG_{(0j)^c})
\d \tilde x \d \tilde y }. 
\end{align*}
We are now in a position to finally use the finite range property~\ref{H:range}. Indeed, for any $x,x^\prime,y,y^\prime \in \Qcal$, any $j\in \Lcal_U$, we can observe that $A(0,x,x^\prime,j,y,y^\prime,   \bG_{0^c},\bG_{j^c}, \bG_{(0j)^c}) = 1$  if $j \notin \mathcal B_R(x,x^\prime,y,y^\prime)$ defined by
\begin{equation*}
\mathcal B_R(x,x^\prime,y,y^\prime) = B(x-y^\prime,R)\cup B(x^\prime-y,R)\cup B(x-y,R).
\end{equation*}
Now, let $\mathcal E_u$ be given by
\begin{align}
\mathcal E_u =& \sup_{l=1,\dots,L} \; \sup_{x,x^\prime,\tilde x,y,y^\prime,\tilde y \in \Qcal} \; \sup_{j \in \Lcal_u} \Eu \Big[
\left|F_l(0,x,x^\prime,y, y^\prime, \bG_{0^c},\bG_{j^c},\bG_{(0j)^c})\right|  \Big\{ \nonumber\\
&R_1(0,x,x^\prime,j,y,y^\prime , \bG_{0^c},\bG_{j^c},\bG_{(0j)^c}) 
\left| A(0,x,x^\prime,j,y,y^\prime , \bG_{0^c},\bG_{j^c},\bG_{(0j)^c})  -1\right| +\nonumber\\
&R_2(0,\tilde x,x^\prime,j,\tilde y,y^\prime, \bG_{0^c},\bG_{j^c},\bG_{(0j)^c}) \times  \left| A(0,\tilde x,x^\prime,j,\tilde y,y^\prime , \bG_{0^c},\bG_{j^c},\bG_{(0j)^c})  -1\right|
\Big\}
\Big]. \label{eq:finiteEu}
\end{align}
Now, we have on the one hand
\begin{align*}
T_1^\prime &\le  \mathcal E_u \int\int\int\int   \sum_{j \in \mathcal B_R(x,x^\prime,y,y^\prime)} %\cap (W_n\ominus (m_n+R))_{-y}} 
\lambda_1(x) \lambda_1(x^\prime)\lambda_1(y) \lambda_1(y^\prime) 
\d x \d x^\prime \d y \d y^\prime \\
&\le  3\mathcal E_u  \int \int  
\Big\{\#(\Lcal_u \cap B(x-y,R)) \Big\}
\lambda_1(x) \lambda_1(y) \d x \d y \\
&\le 3 \mathcal E_u  \left\{ \sup_{x,y\in \Qcal} \#(\Lcal_u \cap B(x-y,R)) \right\} \int \int  \lambda_1(x) \lambda_1(y) \d x \d y= O(1).
\end{align*}
On the other hand,
\begin{align*}
T_2^\prime & \le \mathcal E_u \int\int\int\int\int\int  \\
& \sum_{j \in \mathcal B_R(\tilde x,x^\prime,\tilde y, y^\prime)\cap (W_n\ominus (m_n+R))_{-y}}     \!\!\!\!\!\!\!\!\!\!\!\!\!\!
\lambda_1(x) \lambda_1(x^\prime) \lambda_1(\tilde x)  \lambda_1(y) \lambda_1(y^\prime) \lambda_1(\tilde y)
\d x \d x^\prime \d \tilde x \d y \d y^\prime \d \tilde y\\
&\le 3 \mathcal E_u \left\{ \sup_{x,y\in \Qcal} \#(\Lcal_u \cap B(x-y,R)) \right\} \int \int  \lambda_1(x) \lambda_1(y) \d x \d y= O(1)
\end{align*}
whereby we deduce, recalling~\eqref{eq:asympT1}, that $T_1+T_2 =  \# (\Lcal_u \cap W_n\ominus (m_n+R)) \times (O(1)+ T_1^\prime +T_2^\prime)=O(|W_n|)$, which finally leads to the result.

(ii) We now focus on the consistency of $\hat \theta$. Let $k \in \R^p$, let  $u_n=c v_n^{1/2}|W_n|^{-1}$ for some $c>0$, we can observe that $u_n\to0$ as $n\to \infty$. Let also $\Delta_n(k) = \mathrm{TF}_{n}(\bff,\bG;\theta^\star) - \mathrm{TF}_{n}(\bff,\bG;\theta^\star + u_n k)$. Since $\hat \theta= \mathrm{argmax}_\theta (-\mathrm{TF}_{n}(\bff,\bG;\theta))$, the result is proved if for any $\varepsilon>0$, there exists $K=K(U)$ such that 
\begin{equation*}
    \mathrm P \left( \sup_{|k|=K}\Delta_n(k) > 0  \mid U=u \right) \le \varepsilon .
\end{equation*}
We shorten and introduce a few notation. 
\begin{align*}
e_n(\theta)  &= \mathrm{TF}^{(1)}_n (\bff, \bg; \theta)   = 2 \sum_{l=1}^L \mathrm{DLR}^{(1)}_n(f_l,\bG,\theta) \mathrm{DLR}_n(f_l, \bG,\theta)\\
\check e_n(\theta) &= 2 \sum_{l=1}^L \mathrm{DLR}^{(1)}_n(f_l,\bG,\theta) \widetilde{\mathrm{DLR}}_n(f_l, \bG,\theta)\\
\widetilde{e}_n(\theta) &= 2 |W_n|\sum_{l=1}^L \mathcal I_l(\theta^\star) \widetilde{\mathrm{DLR}}_n(f_l, \bG,\theta)
\end{align*}
Under assumption~\ref{M:f}, there exists $t\in(0,1)$ such that, using a Taylor expansion, we can decompose $\Delta_n(k)$ as
\begin{align*}
\Delta_n(k) =& \;-u_n k^\top \mathrm{TF}^{(1)}_n(\bff,\bG, \theta^\star+ tu_n k) \\
&= \;- u_n k^\top e_n( \theta^\star+ tu_n k) \\
&= T_1+T_2+T_3+T_4
\end{align*}
where, using the short notation $\theta_n^\star = \theta^\star+tu_n k$,
\begin{align*}
T_1&= u_n k^\top \left( \check{e}_n(\theta^\star_n) - e_n(\theta^\star_n) \right) \\
T_2& = u_n k^\top \left( \check{e}_n(\theta^\star) - \check{e}_n(\theta^\star_n)\right)\\
T_3 &=u_n k^\top \left( \widetilde e_n(\theta^\star) - \check{e}_n(\theta^\star)\right) \\
T_4&= -u_n k^\top \widetilde e_n(\theta^\star) 
\end{align*}
We now treat these four terms separately. In the following, we abuse notation by denoting $o_{a.s.}$ the almost sure behaviour with respect to $\P_u$.

\paragraph{Term $T_1$} 
We have that
\begin{align*}
\check{e}_n(\theta^\star_n) - e_n(\theta^\star_n) &= 2 \sum_l  
\mathrm{DLR}^{(1)}_n (f_l,\bG;\theta_n^\star)
\left(
\widetilde{\mathrm{DLR}}_n (f_l,\bG;\theta^\star_n) -\mathrm{DLR}_n (f_l,\bG;\theta_n^\star) 
\right)
\end{align*}
Let $\theta\in \Theta$. On the other hand, we can apply Lemma~\ref{lem:DLRk} and obtain $$\mathrm{DLR}^{(1)}_n (f_l,\bG;\theta)= O_{a.s.}(|W_n|).$$ We may also apply, under assumptions~\ref{M:model}-\ref{M:ergodic}, Theorem~\ref{thm:ergodic}(ii) to $f=f_l$ to derive 
\begin{equation*}
    \widetilde{\mathrm{DLR}}_n (f_l,\bG;\theta^\star_n) -\mathrm{DLR}_n (f_l,\bG;\theta_n^\star) = O_{\P_u}(|W_n|^{1-\beta}).
\end{equation*}
Therefore, for any $\varepsilon>0$, we have, as $n\to \infty$
\begin{equation*}
|W_n|^{-2+\beta-\varepsilon} \left(
\check{e}_n(\theta^\star_n) - e_n(\theta^\star_n)
\right) \stackrel{\P_u}{\to } 0
\end{equation*}
whereby we deduce by the continuous mapping theorem that
\begin{align*}
u_n\left( \check{e}_n(\theta^\star_n) - e_n(\theta^\star_n)\right) &=
O_{\P_u}(u_n|W_n|^{2-\beta+\varepsilon}) \\
&= O_{\P_u}\left( v_n \frac{|W_n|^{1-\beta+\varepsilon}}{\sqrt{v_n}} \right)\\
&= o_{\P_u}(v_n)
\end{align*}
for some small $\varepsilon>0$ and as soon as $\beta>1/2$. Hence, $T_1\le |k|T_1^\prime$ with $T_1^\prime=o_{\P_u}(v_n)$.

%$S_1(\theta) = O_{\P_u}(u_n|W_n|^{2-2/d}\log(|W_n|)^2)$. %Therefore, for any $\varepsilon>0$, $u_n^{-1}|W_n|^{2-2/d}\log(|W_n|)^{2+\varepsilon} S_1(\theta) \stackrel{\P_u}{\to} 0$ as $n\to \infty$ whereby, by continuous mapping theorem, we deduce that 
%\begin{equation*}
%£S_1(\theta_n^\star) = o_{\P_u}( u_n|W_n|^{2-2/d} \log(|W_n|)^{2+\varepsilon}).
%\end{equation*}
%By assumption, 
%\begin{equation}
%    u_n |W_n|^{2-2/d} \log(|W_n|)^{2+\varepsilon} = v_n \frac{|W_n|^{1-2/d} \log(|W_n|)^{2+\varepsilon}}{\sqrt{v_n}} = o(v_n)
%\end{equation}
%Hence, $S_1(\theta_n^\star)  = o_{\P_u}(v_n)$.
%For the term, $S_2$, there exists $s\in(0,1)$ such that by denoting $\check \theta_n^\star= \theta^\star +st u_n k$,
%\begin{equation*}
%k^\top S_2=u_n^2 \sum_l k^\top %\widetilde{\mathrm{DLR}}_n^{(2)} (f_l,\bG;\check %\theta_n^\star) k \;\Delta_n(\theta^\star).
%\end{equation*}
%Now, we combine the continuous mapping theorem and Lemma~\ref{lem:DLRk}, to prove that 
%\begin{equation*}
%    |W_n|^{-1} \widetilde{\mathrm{DLR}}_n^{(2)} (f_l,\bG;\check \theta_n^\star) \stackrel{a.s.}{\to} \mathcal D^{(2)}(f_l, \bG ; \theta^\star).
%\end{equation*}
%Hence, using in addition, Theorem~\ref{thm:ergodic}(ii) to $f=f_l$, we obtain that
%\begin{align*}
%k^\top S_2 &= u_n^2 \; O_{a.s.}(|W_n|) \; O_{\P_u}(|W_n|^{1-1/d} \log|W_n|)  \\
%&=  O_{\P_u}(v_n|W_n|^{-1/d} \log|W_n|) = o_{\P_u}(v_n).
%\end{align*}
%\JF{Fuck Problème sur $S_3$. Le seul truc qu'on ait c'est}\\
%\JF{$S_3=u_n O_{a.s.}(|W_n|)O_\P(|W_n|^{1-1/d}\log|W_n|)$ et c'est mort }

\paragraph{Term $T_2$} Under assumption~\ref{M:f}, there exists $s\in (0,1)$ such that 
\begin{equation*}
T_2 = -t u_n^2 \; k^\top \check e_n^{(1)}(\check\theta^\star_n) k
\end{equation*}
where $\check\theta_n^\star= \theta^\star+st k u_n\to \theta^\star$ as $n\to\infty$, and where for any $\theta \in \Theta$
\begin{align*}
\check e_n^{(1)}(\theta) &= 2 \sum_{l=1}^L \Big\{
\mathrm{DLR}^{(2)}_n(f_l,\bG;\theta)\widetilde{\mathrm{DLR}}_n(f_l,\bG;\theta)+\\
& \qquad \qquad \mathrm{DLR}^{(1)}_n(f_l,\bG;\theta)\widetilde{\mathrm{DLR}}_n^{(1)}(f_l,\bG;\theta)^\top
\Big\}.
\end{align*}
Lemma~\ref{lem:DLRk} and Theorem~\ref{thm:ergodic} show that for any $\theta\in\Theta$, $\P_u$-a.s. as $n\to \infty$
\begin{equation*}
|W_n|^{-2} \check e_n^{(1)}(\theta) \stackrel{a.s.}{\longrightarrow}    2 \sum_{l=1}^L \left\{
\Dcal^{(2)}(f_l;\theta)\Dcal(f_l;\theta) + 
\Dcal^{(1)}(f_l;\theta)\Dcal^{(1)}(f_l;\theta)^\top
\right\}.
\end{equation*}
By the continuous mapping theorem, we deduce that $\P_u$-a.s. as $n\to \infty$
\begin{align*}
|W_n|^{-2} \check e_n^{(1)}(\check\theta^\star_n)
&\stackrel{a.s.}{\longrightarrow} \;    2 \sum_{l=1}^L \left\{
\Dcal^{(2)}(f_l;\theta^\star)\Dcal(f_l;\theta^\star) + 
\Dcal^{(1)}(f_l;\theta^\star)\Dcal^{(1)}(f_l;\theta^\star)^\top
\right\}\\ 
&=:\;  2\sum_{l=1}^L \mathcal I_l(\theta^\star)\mathcal I_l(\theta^\star)^\top = 2 \mathcal I(\theta^\star).
\end{align*}
We finally obtain that
\begin{align*}
T_2 &= -t u_n^2|W_n|^2 |k|^2 \frac{ k^\top (2\mathcal I(\theta^\star))k}{|k|^2} +\\
& \qquad t u_n^2 |W_n|^2 k^\top 
\left\{
2\mathcal I(\theta^\star) - |W_n|^{-2} \check e_n^{(1)}(\check \theta_n^\star) 
\right\} k \\
& \le - \kappa |k|^2  u_n^2|W_n|^2  + |k| T_2^\prime
\end{align*}
where $\kappa = 2t\lambda_{\min}(\mathcal I(\theta^\star)) >0$
and 
\begin{align*}
    T_2^\prime&=\sup_{k, \|k\|=K} |k|u_n^2|W_n|^2  \left|
2\mathcal I(\theta^\star) - |W_n|^{-2} \check e_n^{(1)}(\check\theta^\star_n) 
\right| \\
&= o_{a.s.}(u_n^2 |W_n|^2) = o_{a.s.}(v_n).
\end{align*}

\paragraph{Term $T_3$} We can observe that 
\begin{align*}
\tilde e_n(\theta^\star)- \check e_n(\theta^\star) = 2 |W_n|\sum_{l=1}^L \left\{ 
 |W_n|^{-1}\mathrm{DLR}_{n}^{(1)}(f_l,\bG;\theta^\star) - \mathcal I_l(\theta^\star) 
\right\} \widetilde{\mathrm{DLR}_{n}}(f_l,\bG;\theta^\star).
\end{align*}
Under assumptions~\ref{M:model}, \ref{M:f} and~\ref{M:ergodic}, one can apply Lemma~\ref{lem:DLRk} and Theorem~\ref{thm:convTF}(i) to show that
\begin{equation*}
|W_n|^{-1} \mathrm{DLR}^{(1)}_{n}(f_l,\bG;\theta^\star) - \mathcal I_l(\theta^\star) 
 = o_{a.s.}(1) 
 \quad \text{ and }  \quad
 \widetilde{\mathrm{DLR}}_{n}(f_l,\bG,\theta^\star)= O_{\mathbb P_u}(v_n^{1/2}).
\end{equation*}
 Since $u_n|W_n|=O(v_n^{1/2})$, we deduce that $T_3 \le |k|T_3^\prime$ with $T_3^\prime= o_{\mathbb P_u}(v_n)$.\\

\paragraph{Term $T_4$} By definition and Proposition~\ref{prop:sumDLR}, the random variable $\widetilde e_n(\theta^\star)$ is centered. Therefore, we have in particular from Theorem~\ref{thm:convTF}(i) and conditional Markov's inequality, that $\widetilde e_n(\theta^\star)= O_{\P_u}(\sqrt{\Var_u[\widetilde e_n(\theta^\star)]}) = O_{\P_u}(v_n^{1/2})$ %\C{TO DO à vérifier}.

\paragraph{Conclusion} We have that
\begin{align*}
    \Delta_n(k) \le u_n|W_n| \; |k| |\tilde e_n(\theta^\star)| - \kappa |k|^2 u_n^2|W_n|^2  +|k|T 
\end{align*}
with $T=T_1^\prime+T_2^\prime+T_3^\prime$. Let $u\in \Dcal$, we then have
\begin{align*}
\mathrm P \bigg( \sup_{|k|=K}\Delta_n(k) > 0  \mid U=u \bigg) \le 
\mathrm P \bigg( 
|\tilde e_n(\theta^\star)| + u_n^{-1}|W_n|^{-1} T \ge u_n|W_n| \kappa K  \mid U=u
\bigg).
\end{align*}
Now, the choice of the sequence $u_n$ leads to $u_n|W_n|=c v_n^{1/2}$ and to 
\begin{equation*}
|\tilde e_n(\theta^\star)| +   u_n^{-1}|W_n|^{-1} T= O_{\P_u}(v_n^{1/2}) + v_n^{-1/2} o_{\P_u}(v_n) = O_{\P_u}(v_n^{1/2})
\end{equation*}
whereby we deduce the result.
\end{proof}
\end{appendix}

\begin{acks}[Acknowledgments]
The  research  of C. Renaud Chan and J.-F.  Coeurjolly is  supported  by Persyval-lab (ANR-11-61 LABX-0025-01). The authors would like to thank David Dereudre for insightful discussions and Rémy Drouilhet for assistance with the implementation of the simulation algorithm.
\end{acks}

\bibliographystyle{imsart-number} 
\bibliography{refs}

\end{document}